\newcommand{\scal}[2]{\ensuremath{\langle #1 , #2 \rangle}} 
\newcommand{\norm}[1]{\left\lVert#1\right\rVert}
\newcommand{\Leb}{\mathscr{L}}
\newcommand{\N}{\mathbb{N}}
\newcommand{\R}{\mathbb{R}}
\newcommand{\de}{\ensuremath{\, \mathrm d}} 
\newcommand\restr[2]{{
  \left.\kern-\nulldelimiterspace 
  #1 
  \right|_{#2} 
  }}
\newcommand{\CD}{\mathsf{CD}}
\newcommand{\RCD}{\mathsf{RCD}}
\newcommand{\Ch}{\mathsf{Ch}}
\newcommand{\X}{\mathsf{X}}
\newcommand{\Ks}{\mathsf{KS}}
\newcommand{\ks}{\mathsf{ks}}
\newcommand{\Lip}{\mathsf {Lip}}
\newcommand{\Lipglob}{\textbf {Lip}}
\newcommand{\Lipfunc}{\text {Lip}}
\newcommand{\di}{\mathsf d} 
\newcommand{\m}{\mathfrak m} 
\title{\textbf{A canonical infinitesimally Hilbertian structure on locally Minkowski spaces}}
\date{}
\author{Mattia Magnabosco\thanks{Institut f\"ur Angewandte Mathematik, Universit\"at Bonn. Email: magnabosco@iam.uni-bonn.de}, Chiara Rigoni\thanks{Fakult\"at f\"ur Mathematik, Universit\"at Wien. Email: chiara.rigoni@univie.ac.at}}
\newtheoremstyle{remark}
        {10pt}
        {10pt}
        {}
        {}
        {\itshape}
        {.}
        {.4em}
        {}
\newtheoremstyle{proof}
        {10pt}
        {10pt}
        {}
        {}
        {\itshape}
        {.}
        {.4em}
        {}
\newtheoremstyle{definition}
        {10pt}
        {10pt}
        {}
        {}
        {\bfseries}
        {.}
        {.4em}
        {}
\newtheoremstyle{theorem}
        {10pt}
        {10pt}
        {\slshape}
        {}
        {\bfseries}
        {.}
        {.4em}
        {}
\theoremstyle{theorem}
\newtheorem{theorem}{Theorem}[section]
\newtheorem{prop}[theorem]{Proposition}
\newtheorem{corollary}[theorem]{Corollary}
\newtheorem{lemma}[theorem]{Lemma}
\theoremstyle{definition}
\newtheorem{definition}[theorem]{Definition}
\theoremstyle{remark}
\newtheorem{remark}[theorem]{Remark}
\theoremstyle{proof}
\newtheorem*{pro}{Proof}
 {\popQED\end{pro}}
\begin{document}
\maketitle

\begin{abstract}

The aim of this paper is to show the existence of a canonical distance $\di'$ defined on a \emph{locally Minkowski} metric measure space $(\X,\di,\m)$ such that:
\begin{itemize}
    \item[i)] $\di'$ is equivalent to $\di$,
    \item[ii)] $(\X, \di', \m)$ is infinitesimally Hilbertian.
\end{itemize}
This new regularity assumption on $(\X, \di,\m)$ essentially forces the structure to be locally similar to a Minkowski space and defines a class of metric measure structures which includes all the Finsler manifolds, and it is actually strictly larger. The required distance $\di'$ will be the intrinsic distance $\di_\Ks$ associated to the so-called \emph{Korevaar-Schoen energy}, which is proven to be a quadratic form. In particular, we show that the Cheeger energy associated to the metric measure space $(\X, \di_\Ks, \m)$ is in fact the Korevaar-Schoen energy.

\end{abstract}

\section{Introduction}

A fundamental problem in the study of metric measure spaces  
is to generalize in this non-smooth setting some of the analytic results valid in the Euclidean setting. The starting point of this generalization consists in finding a suitable notion of pointwise norm of the gradient of a function, having at disposal just a distance and not a differential structure. The first answers in this direction come from the theories proposed by \textsc{Heinonen} and \textsc{Koskela} in \cite{MR1654771} and by \textsc{Shanmugalingam} in \cite{MR1809341}. In the setting of doubling metric measure spaces supporting a Poincaré inequality, \textsc{Cheeger} in \cite{MR1708448} proposed a construction based on a relaxation procedure: starting with a core of Lipschitz functions, it is possible to introduce the notion of \emph{minimal generalized upper gradient} which plays the role of the norm of the gradient in the non-smooth setting. The work of \textsc{Cheeger} was then refined by \textsc{Ambrosio}, \textsc{Gigli} and \textsc{Savaré} in  \cite{MR3152751}, where they introduced the notions of \emph{weak upper gradient} and \emph{relaxed upper gradient}. However, this last slightly different and more sophisticated approach coincide with the original one proposed by Cheeger when they are used to generalize the notion of Dirichlet energy, which in the Euclidean setting is defined by 
\begin{equation*}
    \mathscr{D}(f) := \int_{\R^n} |\nabla f(x)|^2 \de x \qquad f \in W^{1,2}(\R^n).
\end{equation*}
In particular, the integral of the minimal generalized upper gradient (or relaxed/weak upper gradient) squared of a function $f$ defines the so-called Cheeger energy $\Ch$ of $f$, and provides a suitable generalization of the Dirichlet energy in the setting of metric measure spaces. Moreover, since the classical heat flow in Euclidean spaces can be seen as the gradient flow (cfr. \cite{MR2401600}) of the Dirichlet energy, also the gradient flow of the Cheeger energy defines a suitable notion of heat flow in the framework of metric measure spaces.\\

It turns out that the study of the properties of the heat flow actually plays a prominent role in order to build a differential structure on a metric measure space. In fact, the gradient of a Sobolev function is in general not uniquely defined and even if so it might not linearly depend on the function, as it happens on smooth Finsler manifolds. Spaces where the gradient of a Sobolev function $f$ is unique and linearly depends on $f$ are those which, from the Sobolev calculus point of view, resemble Riemannian manifolds among the more general Finsler ones. These can be characterized as those for which the heat flow is linear or, equivalently, the energy is a quadratic form. This motivates the following definition introduced by \textsc{Gigli} in \cite{MR3381131} in the setting of metric measure spaces:

\begin{definition}\label{def:infHil}
A metric measure space $(\X,\di,\m)$ is said to be \emph{infinitesimally Hilbertian} if the associated Cheeger energy is a quadratic form on $L^2(\X,\m)$, that is
\begin{equation*}
    \Ch[f+g]+ \Ch[f-g] = 2\Ch[f] + 2 \Ch[g] \qquad \forall f,g\in L^2(\X,\m).
\end{equation*}
\end{definition}
On such spaces, the tangent/cotangent module is, when seen as Banach space, an Hilbert space and its pointwise norm satisfies a pointwise parallelogram identity (see \cite{GigliND}).\\

This notion provides new tools in the investigation of the analytic and geometric properties of $\CD$ spaces, introduced by \textsc{Lott}- \textsc{Villani} \cite{MR2480619} and \textsc{Sturm} \cite{MR2237206,MR2237207}. An infinitesimally Hilbertian metric measure space satisfying the $\CD(K,N)$ condition, for some $K \in \R$ and $N\ge 1$, is said to be an $\RCD(K,N)$ space. The study of $\RCD$ spaces, that was pioneered by \textsc{Ambrosio}, \textsc{Gigli} and \textsc{Savaré} \cite{MR3152751,MR3205729,MR3298475}, has had many remarkable developments in the recent years. In particular, the class of $\RCD$ spaces includes all the $\CD$ spaces whose analytic structure resembles more the one of a Riemannian manifold: in this view, as shown by \textsc{Ohta} and \textsc{Sturm} in \cite{MR2917125}, $(\R^n, \norm{\cdot}, \Leb^n)$ is an $\RCD(0, n)$ space if and only if $\norm{\cdot }$ is an Euclidean norm.\\

Our framework consists of the class of metric measure spaces $(\X,\di,\m)$ which are doubling, support the Poincaré inequality and are \emph{locally Minkowski}, in the sense of Definition \ref{def:locMink}. Intuitively, this new assumption on the space requires the space to be locally ``almost" isometric to a Minkowski space (cfr. \cite{MR2917125}). The class of locally Minkowski spaces contains all the Finsler manifolds, but also more ``irregular" spaces. In fact, we do not require the existence of any smooth structure and we also allow a locally Minkowski space to have different topological dimensions in different regions, as shown in the example represented in Figure \ref{fig:example}.\\

The aim of this paper is to investigate the existence of a canonical distance $\di'$ on a given locally Minkowski metric measure space $(\X,\di,\m)$, satisfying the following properties: 
\begin{itemize}
    \item $\di'$ equivalent to $\di$, meaning that $c_1 \di \leq \di' \leq c_2 \di$, for some positive constants $c_1<c_2$,
    \item the metric measure space $(\X, \di', \m)$ is infinitesimally Hilbertian.
\end{itemize}
We are going to identify the desired distance $\di'$ as the intrinsic distance $\di_\Ks$ associated to the so-called \emph{Korevaar-Schoen energy}. This functional was first introduced in \cite{KS}, as the key tool to study the harmonicity of  maps defined from a smooth manifold to a general metric space (see also \cite{Jost}). We point out that recently this energy has been a central object of investigation in a series of works \cite{GT1, GN21, MR4316816}, where it was extended to maps defined on more general $\RCD$ spaces. Notice that since this new distance $\di'$ is defined intrinsically, it will be in particular canonical, meaning that it will depend only on the space and not on particular choices of some other geometric objects. As can intuitively be guessed from its definition (see Subsection \ref{sec:ksenergy}), the Korevaar-Schoen energy will turn out to be a quadratic form. 

It is important to underline that if we apply our procedure to a $\CD(K, N)$ space $(\X, \di, \m)$ which is locally Minkowski, the resulting space $(\X, \di', \m)$ will not necessarily satisfy any synthetic curvature-dimension bound (nor in particular it will be an $\RCD$ space, as explained in Section \ref{sec:preliminaries}).\\

In the last section we will show that the Cheeger energy associated to the distance $\di_\Ks$ is exactly the Korevaar-Schoen energy, proving in particular that $(\X, \di_\Ks, \m)$ is an infinitesimally Hilbertian space. Moreover, it was recently proven by \textsc{Gigli} and \textsc{Tyulenev} in \cite{MR4316816} that for $\RCD$ spaces the Korevaar-Schoen energy is equal (up to a dimensional constant) to the Cheeger energy: this justifies the choice of the Korevaar-Schoen energy as the quadratic form inducing the infinitesimally Hilbertian structure of the space $(\X, \di_{\Ks}, \m)$. We also point out that a metric associated to the Korevaar-Schoen energy was already studied in the setting of Finsler manifolds by \textsc{Centore}  in \cite{MR1829124}.\\

In order to prove that the distance $\di_\Ks$ is equivalent to $\di$, we will pass through the intrinsic distance $\di_\Ch$ associated to the Cheeger energy. It was already proven by \textsc{Ohta} in \cite{MR2231926} that, under some additional assumptions, the distance $\di_\Ch$ is equivalent to $\di$: in Section \ref{sec:diCh}, we will adapt his argument to our setting to obtain the same equivalence. On the other hand, the locally Minkowski assumption is essential to prove that $\di_\Ks$ is equivalent to $\di_\Ch$: we will show that this hypothesis  guarantees the convergence of the Korevaar-Schoen potentials, in a sense that will be clarified later (see Section \ref{sec:convpot}). This, combined with some refined estimated of the Korevaar-Schoen potentials, which in particular are improved from the ones in \cite{MR2237207}, will provide the equivalence of the distances $\di_\Ks\simeq\di_\Ch$.\\

Finally, we point out that a fundamental tool to prove that the Korevaar-Schoen energy is actually the Cheeger energy associated to $\di_\Ks$ is a version of the Rademacher theorem for locally Minkowski spaces (Proposition \ref{prop:differential}). This result is a refinement of the one proposed by \textsc{Cheeger} in \cite{MR1708448} for metric measure spaces satisfying some additional structural assumptions. The techniques used in our proof are a suitable adaptation of the ones developed by \textsc{Cheeger}, but applied to the setting of locally Minkowski spaces. As discussed in Remark \ref{rmk:nonunique} and contrary to its classical Euclidean version, the Rademacher theorem we provide is not a uniqueness result. In fact, we just prove the existence of a differential, which takes the form of a linear function on the tangent space for $\m$-almost every point. However, this will turn out to be sufficient for our purpose, in view of the convergence of the Korevaar-Schoen potentials.\\

\noindent {\bf Acknowledgments:} The authors thank Professor Karl-Theodor Sturm for many valuable discussions. The second author gratefully acknowledges support by the European Union through the ERC--AdG 694405 ``Ricci\-Bounds'' and by the Austrian Science Fund (FWF) through project  F65.

\section{Preliminaries}\label{sec:preliminaries}

In this first section we collect all the preliminary definitions and results we will need in the paper, outlining the setting in which we will work.

\begin{definition}\label{def:mms}
A triple $(\X,\di,\m)$ is said to be a metric measure space if $(\X,\di)$ is a complete and separable metric space and $\m$ is a Borel measure such that $\m(\X)<\infty$.
\end{definition}

\noindent The finiteness of the reference measure $\m$ is not always required, however we decided to do it this work. This assumption makes some proofs and discussions much easier and it does not really affect the generality of the results. Therefore we invite the reader to keep in mind that this is a technical requirement, rather than a fundamental one.

\begin{definition}[Doubling Condition]
A metric measure space $(\X,\di,\m)$ is said to satisfy the \textit{(local) doubling condition}, if for every constant $R>0$ there exists $C_D:=C_D(R)>0$ such that for every $x\in \X$ it holds
\begin{equation}\label{eq:doubling}
    \m(B_{2r}(x)) \leq C_D\m (B_{r}(x)) \quad \text{for every } 0<r< R.
\end{equation}
\end{definition}


\noindent A function $f:\X\to\R$ is Lipschitz ($f\in \Lipfunc(\X)$) if 
\begin{equation*}
    \Lipglob[f]:= \sup_{x,y\in \X} \frac{|f(x)-f(y)|}{\di(x,y)}<\infty,
\end{equation*}
this quantity will be called global Lipschitz constant. 
For a Lipschitz function $f$, we define the pointwise Lipschitz constant:
\begin{equation}\label{eq:pointlip}
    \Lip[f](\bar z) :=\limsup _{r \rightarrow 0} \sup _{z\in B_r( \bar z)} \frac{|f(z)-f(\bar z)|}{r} = \limsup_{z\to \bar z} \frac{|f(z)-f(\bar z)|}{\di(z,\bar z)}.
\end{equation}
Both the global Lipschitz constant and the pointwise Lipschitz constant are obviously strictly dependent on the reference distance $\di$, however we decided not to make this dependence explicit, in order to ease the notation. In the following we are going to consider different distances on the space $\X$, but, unless otherwise indicated, the notations $\Lipfunc(\X)$, $\Lipglob[f]$ and $\Lip[f]$ will refer to the objects defined with respect to the reference distance $\di$.
Given $\varepsilon>0$, a map $f:X \to Y$ between two metric spaces $(X,\di_X)$ and $(Y,\di_Y)$ is said to be $\varepsilon$-bi-Lipschitz if 
\begin{equation*}
    (1-\varepsilon)\cdot \di_X(x,x')\leq \di_Y(f(x),f(x')) \leq (1+\varepsilon)\cdot \di_Y(x,x')
\end{equation*}
for every $x,x'\in X$.

In a metric space $(\X,\di)$, we say that a curve $\gamma:[0,1] \to \X$ is absolutely continuous (and we write $\gamma\in AC([0,1], \X)$) if there exists $g\in L^1([0,1])$ such that 
\begin{equation}\label{eq:acdef}
    \di(\gamma_{s}, \gamma_{t}) \leq \int_{s}^{t} g(r) \de r \quad \forall s< t \in [0,1].
\end{equation}
It can be proven that, if $\gamma\in AC([0,1], \X)$ there exists a minimal function $g$ satisfying \eqref{eq:acdef} which is called \textit{metric derivative} and given for a.e. $t\in[0,1]$ by
\begin{equation*}
    |\dot{\gamma}_{t}|:=\lim _{h\to 0} \frac{\di(\gamma_{t+h}, \gamma_{t})}{|s-t|}.
\end{equation*}
The metric derivative plays the role of the velocity of the curve $\gamma$; in particular it holds $l(\gamma)= \int_0^1 |\dot{\gamma}_{t}|\de t$, where $l(\gamma)$ denotes the length of $\gamma$. A metric space is said to be a \emph{length space} if the distance between any two points is equal to the infimum of the lengths of all the absolutely continuous curves which join them. 

We recall the fundamental notion of \textit{upper gradient} that was introduced in \cite{MR1654771}.

\begin{definition}[Upper Gradient]
In a metric measure space $(\X,\di,\m)$, let $f: \X \to \Bar{\R}$ be a Borel measurable function. We say that $g:\X\to [0,\infty]$ is an upper gradient for $f$ if, for every $\gamma \in AC ([0,1], \X)$, the function $s \mapsto g(\gamma_s)|\Dot{\gamma_s}|$ is measurable and 
\begin{equation*}
    |f(\gamma_1)- f(\gamma_0)| \leq \int_0^1g(\gamma_s)|\Dot{\gamma_s}| \de s.
\end{equation*}
\end{definition}

Before going on we introduce the following (standard) notation for the mean integral, that will be used for the all paper:
\begin{equation*}
    f_B := \fint_B f \de \m := \frac{1}{\m(B)}\int_B f \de \m 
\end{equation*}
\begin{definition}[Poincaré Inequality]
We say that a metric measure space $(\X,\di,\m)$ supports a \textit{(local) Poincaré inequality} if there exists $1\leq \Lambda <\infty$ such that for every $R>0$, we can find a constant $C_P:=C_P(R)\geq 1$ for which the inequality
\begin{equation}\label{eq:poincare}
    \fint_{B_r(x)} |f-f_{B_r(x)}|\leq C_P r \left( \fint_{B_{\Lambda r}(x)} g^2 \de \m \right)^\frac 12
\end{equation}
holds for every measurable function $f$, every upper gradient $g$ of $f$ and every $0<r< R_P$. 
\end{definition}

Under this assumption we can provide another characterization of the pointwise Lipschitz constant due to Cheeger \cite[Corollary 6.36]{MR1708448} that will help us in the following. 
\begin{prop}\label{prop:pointlip}
Let $(\X,\di,\m)$ be a metric measure space satisfying the doubling condition and supporting a Poincar\'e inequality. Given $f\in\Lipfunc(\X)$, then for $\m$-almost every $x\in \X$ it holds 
\begin{equation*}
    \Lip[f](x) = \lim_{r\to 0} \sup_{\di(y,x)=r} \frac{|f(y)-f(x)|}{r}.
\end{equation*}
\end{prop}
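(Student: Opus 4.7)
The inequality
\[
\limsup_{r\to 0} \sup_{\di(y,x)=r} \frac{|f(y)-f(x)|}{r} \leq \Lip[f](x)
\]
holds pointwise on $\X$ without any further hypothesis: using the second form in \eqref{eq:pointlip}, every $y$ on the sphere of radius $r$ satisfies $|f(y)-f(x)|/r = |f(y)-f(x)|/\di(y,x)$, so the sphere-sup is bounded by the limsup of $|f(z)-f(x)|/\di(z,x)$ over $z \to x$, which is exactly $\Lip[f](x)$. Conversely, pick $z_n \to x$ with $z_n \neq x$ and $|f(z_n)-f(x)|/\di(z_n,x) \to \Lip[f](x)$ and set $r_n := \di(z_n,x) \downarrow 0$; since $z_n$ lies exactly on the sphere of radius $r_n$,
\[
\sup_{\di(y,x)=r_n} \frac{|f(y)-f(x)|}{r_n} \geq \frac{|f(z_n)-f(x)|}{r_n} \longrightarrow \Lip[f](x).
\]
So, pointwise on $\X$, the $\limsup$ of the sphere-suprema already coincides with $\Lip[f](x)$, and neither doubling nor Poincar\'e is needed so far.

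The actual content of the proposition is the promotion of this $\limsup$ to a genuine $\lim$, i.e., showing that
\[
\mathrm{lip}^\ast[f](x) := \liminf_{r\to 0} \sup_{\di(y,x)=r} \frac{|f(y)-f(x)|}{r}
\]
equals $\Lip[f](x)$ for $\m$-a.e. $x$. This is exactly the point of \cite[Corollary 6.36]{MR1708448}, and the route I would follow is Cheeger's: establish that $\mathrm{lip}^\ast[f]$ is itself an upper gradient of $f$ under the doubling and Poincar\'e assumptions. Once this is granted, the general fact that in doubling PI spaces the pointwise Lipschitz constant of a Lipschitz function agrees $\m$-a.e. with the minimal relaxed upper gradient (a standard consequence of the Cheeger--Ambrosio--Gigli--Savar\'e theory recalled in the preliminaries) forces $\Lip[f] \leq \mathrm{lip}^\ast[f]$ $\m$-a.e., and combined with the trivial reverse inequality we obtain the required identity.

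The main obstacle is precisely the upper-gradient property of $\mathrm{lip}^\ast[f]$: because only a $\liminf$ is available, one cannot directly compare $|f(\gamma_1)-f(\gamma_0)|$ to $\int_0^1 \mathrm{lip}^\ast[f](\gamma_s)|\dot\gamma_s|\de s$. The Poincar\'e inequality is used to bound increments of $f$ over small balls $B_r(\gamma_t)$ by $L^2$-averages of any upper gradient, and the doubling condition is exploited through a Vitali/density argument to extract, at $\m$-a.e. point $x$, a sequence of radii $r_k \downarrow 0$ on which the sphere-suprema are close to $\mathrm{lip}^\ast[f](x)$ and which can be chained together along any test curve. This is the delicate step; the remainder of the plan is the standard identification of $\Lip[f]$ with the minimal upper gradient, which in the present setting is essentially formal.
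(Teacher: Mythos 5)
Your pointwise analysis of the $\limsup$ is correct, and your overall skeleton (reduce everything to the a.e.\ lower bound on the $\liminf$ of the sphere--suprema, then invoke the identification $|Df|_C=\Lip[f]$ $\m$-a.e.\ together with minimality of $|Df|_C$) is a reasonable reading of what the statement requires; note, for comparison, that the paper offers no proof at all here, it simply quotes \cite[Corollary 6.36]{MR1708448}. However, your proposal stops exactly where the mathematics starts: the only nontrivial ingredient, namely that $\mathrm{lip}^\ast[f](x):=\liminf_{r\to 0}\sup_{\di(y,x)=r}|f(y)-f(x)|/r$ dominates $|Df|_C$ (you phrase it as ``$\mathrm{lip}^\ast[f]$ is an upper gradient''), is asserted and then deferred as ``the delicate step'' with only a vague sketch. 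As it stands this is a plan, not a proof.

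Moreover, the sketched mechanism cannot deliver that step. An upper gradient must satisfy $|f(\gamma_1)-f(\gamma_0)|\le\int_0^1 g(\gamma_s)|\dot\gamma_s|\,\de s$ for \emph{every} absolutely continuous curve, i.e.\ it is a pointwise-everywhere condition along curves; but a Vitali/density selection of ``good radii'' valid at $\m$-a.e.\ point gives no control on the $\m$-negligible set of points a given curve actually visits, so ``chaining along any test curve'' is not available. The curve-wise argument that does work applies to the \emph{ball} quantity $\liminf_{r\to0}\sup_{y\in \bar B_r(x)}|f(y)-f(x)|/r$: parametrizing $\gamma$ by arclength one has $\gamma(t+s)\in \bar B_{|s|}(\gamma(t))$ and the radii $|s|$ sweep all small values, so the $\liminf$ can be taken because the difference quotients of $f\circ\gamma$ converge a.e. For the \emph{sphere} quantity this breaks down: $\gamma(t+s)$ generally lies strictly inside the ball of radius $|s|$, the sphere-supremum at radius $|s|$ does not dominate the increment to interior points, and sphere-suprema are not monotone in $r$; so there is no reason the sphere-$\liminf$ is an honest upper gradient, and indeed this is not what Cheeger proves -- his Corollary 6.36 is precisely the a.e.\ statement, obtained with genuine additional work (built on Theorem 6.1 and the geometry forced by doubling and Poincar\'e, e.g.\ quasiconvexity), not by exhibiting the sphere-$\liminf$ as an upper gradient. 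A correct self-contained route would be, for instance: show the ball-version $\liminf$ is an upper gradient (the easy curve argument above), deduce $|Df|_C\le$ ball-$\liminf$ a.e., combine with $\Lip[f]=|Df|_C$ a.e.\ to get existence of the limit for the ball version, and then supply a separate argument converting ball-suprema into sphere-suprema at \emph{every} small radius at a.e.\ point -- and it is exactly this last conversion that is missing from your proposal. (Minor additional points you would also need to address: measurability of the sphere-supremum in $x$, and the convention at radii where the sphere may be empty.)
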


\begin{prop}\label{prop:D+P}
 Let $(\X,\di,\m)$ be a metric measure space that satisfies a doubling condition and supports a Poincaré inequality. If $\di'$ is a distance on $\X$, equivalent to $\di$, the metric measure space $(\X,\di',\m)$ satisfies a doubling condition and supports a Poincaré inequality.
\end{prop}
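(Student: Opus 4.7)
Fix $c_1,c_2>0$ with $c_1\di\leq\di'\leq c_2\di$, and write $B^{\di}_r(x)$ and $B^{\di'}_r(x)$ for the corresponding metric balls. Directly from the equivalence of distances one has the inclusions
\[
 B^{\di}_{r/c_2}(x)\subseteq B^{\di'}_r(x)\subseteq B^{\di}_{r/c_1}(x),
\]
which will be used throughout. Analogously, a curve $\gamma$ that is absolutely continuous with respect to $\di$ is also absolutely continuous with respect to $\di'$, and the metric derivatives satisfy $|\dot\gamma_t|_{\di'}\leq c_2|\dot\gamma_t|_{\di}$.

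For the doubling property, estimate
\[
 \m(B^{\di'}_{2r}(x))\leq \m(B^{\di}_{2r/c_1}(x))
 \quad\text{and}\quad
 \m(B^{\di'}_r(x))\geq \m(B^{\di}_{r/c_2}(x)).
\]
Since $2r/c_1 = (2c_2/c_1)(r/c_2)\leq 2^{k}(r/c_2)$ for any integer $k\geq\log_2(2c_2/c_1)$, iterating the $\di$-doubling inequality $k$ times (on scales below $R/c_1$) yields $\m(B^{\di'}_{2r}(x))\leq C_D(R/c_1)^k\,\m(B^{\di'}_r(x))$ whenever $r<R$. This gives $\di'$-doubling with an explicit local constant.

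For the Poincaré inequality the starting observation is that upper gradients transfer between the two distances: if $g'$ is an upper gradient of $f$ with respect to $\di'$, then for every $\di$-absolutely continuous curve $\gamma$
\[
 |f(\gamma_1)-f(\gamma_0)|\leq\int_0^1 g'(\gamma_s)|\dot\gamma_s|_{\di'}\de s\leq c_2\int_0^1 g'(\gamma_s)|\dot\gamma_s|_{\di}\de s,
\]
so $c_2 g'$ is an upper gradient of $f$ with respect to $\di$. Hence the $\di$-Poincaré inequality is available on the ball $B^{\di}_{r/c_1}(x)$ with upper gradient $c_2 g'$. On the left-hand side, use the elementary bound $\fint_B|f-f_B|\,\dee\m\leq 2\fint_B|f-a|\,\dee\m$ with $a=f_{B^{\di}_{r/c_1}(x)}$, together with
\[
 \frac{1}{\m(B^{\di'}_r(x))}\leq \frac{1}{\m(B^{\di}_{r/c_2}(x))}
 \quad\text{and}\quad
 \frac{\m(B^{\di}_{r/c_1}(x))}{\m(B^{\di}_{r/c_2}(x))}\leq C,
\]
where the last ratio is bounded via finitely many applications of $\di$-doubling. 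On the right-hand side, the enlargement $B^{\di}_{\Lambda r/c_1}(x)\subseteq B^{\di'}_{\Lambda c_2 r/c_1}(x)$ and a final doubling argument allow one to replace the $\di$-average by a $\di'$-average over a ball of radius $\Lambda' r$ with $\Lambda'=\Lambda c_2/c_1$. Collecting the constants produces a $\di'$-Poincaré inequality of the required form.

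The argument is essentially bookkeeping, and I do not expect any genuine obstacle: the only point that requires mild care is that both $C_D$ and $C_P$ are local constants depending on the scale $R$, so one must track the finitely many radii that appear (differing from $r$ by fixed factors depending only on $c_1,c_2,\Lambda$) and invoke the local hypotheses at a sufficiently large scale $R/c_1$.
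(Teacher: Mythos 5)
Your argument is correct and follows essentially the same route as the paper's proof: ball inclusions from the equivalence of the distances combined with finitely many applications of the $\di$-doubling inequality, the observation that a $\di'$-upper gradient rescaled by the Lipschitz comparison constant is a $\di$-upper gradient, and the elementary bound $\fint_B|f-f_B|\de\m\le 2\fint_B|f-c|\de\m$ to pass between averages over $\di$-balls and $\di'$-balls. The only cosmetic difference is that the paper normalizes the comparison constants to powers of $2$ and uses the already-established $\di'$-doubling constant in the final step on the right-hand side, which is just bookkeeping.
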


\begin{proof}
Let $k\in \N$ be such that 
\begin{equation*}
   2^{-k} \di \leq \di ' \leq 2^k \di.
\end{equation*}
As a consequence, for every $x\in \X$ and $r>0$ we have that
\begin{equation*}
    B_{2^{-k}r}(x)\subseteq B'_{r}(x) \subseteq B'_{2r}(x) \subseteq B_{2^{k+1}r}(x).
\end{equation*}
Then, we can conclude that 
\begin{equation*}
  \m\big(B'_{r}(x)\big) \geq \m \big(B_{2^{-k}r}(x)\big) \geq \frac 1{C_D^{2k+1}}  \m \big( B_{2^{k+1}r}(x)\big) \geq \frac 1{C_D^{2k+1}} \m \big( B'_{2r}(x)\big),
\end{equation*}
where $C_D:=C_D(2^{k+1} r)$. This proves the doubling condition.\\
In order to prove the Poncar\'e, we fix $R'>0$, we consider a measurable function $f$ and a point $x\in \X$. Proceeding as done in the first part of the proof, we deduce that for every $R<R'$    
\begin{equation*}
    B'_{R}(x) \subseteq B_{2^{k}R}(x) \quad \text{and}\quad \m\big(B'_{R}(x)\big) \geq \frac 1{C_D^{2k}}  \m \big( B_{2^{k}R}(x)\big),
\end{equation*}
where $C_D:=C_D(2^{k} R')$.
As a consequence, using Lemma \ref{lem:meanint}, we can conclude that for every $R<R'$ it holds
\begin{align*}
    \fint_{B'_R(x)} |f - f_{B'_R(x)}| \de \m &= \frac1{\m(B'_R(x))} \int_{B'_R(x)} |f - f_{B'_R(x)}| \de \m  \\
    &\leq \frac2{\m(B'_R(x))} \int_{B'_R(x)} |f - f_{B_{2^{k} R}(x)}| \de \m \\&\leq \frac{2 C_D^{2k}}{ \m(B_{2^k R}(x))} \int_{B_{2^{k} R}(x)} |f - f_{B_{2^{k} R}(x)}| \de \m \\
    &= 2C_D^{2k} \fint_{B_{2^{k} R}(x)} |f - f_{B_{2^{k} R}(x)}| \de \m.
\end{align*}
On the other hand, it is easy to notice that for every upper gradient $g$ of $f$, with respect to $\di'$, $2^k g$ is an upper gradient of $f$, with respect to $\di$. Then we can apply the Poincar\'e inequality for the metric measure space $(\X,\di,\m)$, and deduce that
\begin{equation*}
    \fint_{B_{2^k R}(x)} |f - f_{B_{2^k R}(x)}| \leq C_P 2^k R \left( \fint_{B_{ 2^k\Lambda R}(x)} 2^{2k}g^2 \de \m \right)^\frac 12 = C_P 2^{2k} R \left( \fint_{B_{ 2^k\Lambda R}(x)}g^2 \de \m \right)^\frac 12,
\end{equation*}
where $C_P=C_P(2^k R')$. Moreover, proceeding as before, we can prove that 
\begin{equation*}
    \fint_{B_{ 2^k\Lambda R}(x)} g^2 \de \m \leq \big(C'_D(2^{2k}\Lambda R')\big)^{2k}\fint_{B'_{ 2^{2k}\Lambda R}(x)} g^2 \de \m,
\end{equation*}
where $C'_D(\cdot)$ denotes the (local) doubling constant with respect to the distance $\di'$. Putting together the last three estimates (which do not depend on $R<R'$, $f$ and $x$) we prove the desired inequality for every $\di$-upper gradient $g$.
\end{proof}

\begin{lemma}\label{lem:meanint}
 In a measure space $(\X,\m)$, let $f:\X\to \R$ be a measurable function. Then, for every measurable set $A$ and every constant $c \in \R$ the following inequality holds
 \begin{equation*}
      \int_A |f-f_{A}| \de \m  \leq 2\int_A |f-c| \de \m.
 \end{equation*}
\end{lemma}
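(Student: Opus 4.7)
The plan is to prove the inequality by applying the triangle inequality pointwise and then controlling the distance between the constant $c$ and the mean value $f_A$ by an average of $|f-c|$. The argument is short and essentially routine, but it is worth noting that the result holds trivially when $\m(A)=0$ or when $\int_A |f-c|\de\m=+\infty$, so we may assume $0<\m(A)<\infty$ and $f-c\in L^1(A,\m)$ (which also ensures that $f_A$ makes sense in this setting).

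First, I would write the pointwise triangle inequality
\begin{equation*}
    |f(x)-f_A| \leq |f(x)-c| + |c-f_A| \qquad \text{for every } x\in A,
\end{equation*}
and integrate over $A$ to obtain
\begin{equation*}
    \int_A |f-f_A| \de \m \leq \int_A |f-c| \de \m + \m(A)\,|c-f_A|.
\end{equation*}
Next, I would estimate the second summand by rewriting $c$ as the mean of the constant function $c$ on $A$, so that
\begin{equation*}
    |c-f_A| = \left| \fint_A (c-f) \de \m \right| \leq \fint_A |f-c| \de \m = \frac{1}{\m(A)} \int_A |f-c|\de \m.
\end{equation*}

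Plugging this bound into the previous display, the factor $\m(A)$ cancels and the right-hand side becomes $2\int_A |f-c|\de\m$, which yields the claim. There is no real obstacle here: the only subtlety is checking that the mean value is well-defined, which follows from the standing assumption that the ambient measure is finite (and thus $\m(A)<\infty$), and from the fact that one may reduce to the case $f\in L^1(A,\m)$ without loss of generality.
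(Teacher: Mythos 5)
Your proof is correct, and your preliminary reductions (discarding the trivial cases $\m(A)=0$ and $\int_A|f-c|\de\m=+\infty$ so that $f_A$ is well defined) are exactly the right bookkeeping. However, your route differs from the paper's. You use the classical ``add and subtract the constant'' argument: the pointwise bound $|f-f_A|\leq|f-c|+|c-f_A|$, followed by the observation that
\begin{equation*}
\m(A)\,|c-f_A| \;=\; \m(A)\left|\fint_A (c-f)\de\m\right| \;\leq\; \int_A|f-c|\de\m,
\end{equation*}
which makes the two contributions combine into the factor $2$. The paper instead normalizes to $f_A=0$, notes that then $\int_A f^+\de\m=\int_A f^-\de\m=\tfrac12\int_A|f|\de\m$, and splits into the cases $c\leq 0$ and $c\geq 0$: for $c\leq 0$ one restricts to the set where $f\geq 0$, on which $|f-c|\geq|f|$, so already $\int_A|f-c|\de\m\geq\int_A f^+\de\m=\tfrac12\int_A|f|\de\m$. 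Both arguments are elementary and yield the same constant $2$; yours is the more standard one-line mean-comparison estimate and generalizes verbatim to $\int_A|f-f_A|^p$ with constant $2^p$ via Jensen, while the paper's sign-splitting argument avoids the triangle inequality on the mean and makes visible where the factor $2$ comes from (only half of the mass of $|f|$ needs to be captured). There is no gap in your proposal.
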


\begin{proof}
It is easy to notice that we can assume $f_A=0$, without losing generality, then
\begin{equation*}
    \int_A f^+ \de \m = \int_A f^- \de \m = \frac12\int_A |f| \de \m ,
\end{equation*}
where $f^+= \max\{f,0\}$ and $f^-=\max\{-f,0\}$. Now, if $c\leq0$, we can define $A^+:=\{f=f^+\}$ and observe that
\begin{equation*}
    \int_A |f-c| \de \m \geq  \int_{A^+} |f-c| \de \m \geq  \int_{A^+} |f| \de \m = \int_A f^+ \de \m =\frac12\int_A |f| \de \m.
\end{equation*}
The analogous procedure for $c\geq0$ concludes the proof.
\end{proof}

We recall that the Lebesgue diffentiation theorem holds is metric measure spaces satisfying the doubling condition, many proofs in this work rely on this fundamental result. We refer the reader to \cite{MR1800917} for a proof of this result.

\begin{theorem}[Lebesgue Differentiation Theorem]
Let $(\X,\di,\m)$ be a metric measure space satisfying the doubling condition, then given $f\in L^1(\X,\m)$, for $\m$-almost every $x\in \X$ it holds that
\begin{equation*}
    \lim_{r\to 0} \fint_{B_r(x)} f(y) \de \m (y) = f(x).
\end{equation*}
In particular, for every measurable set $A\subseteq \X$, taking $f= \mathbf{1}_A$, we can deduce that for $\m$-almost every $x\in A$ 
\begin{equation*}
    \lim_{r\to 0 } \frac{\m(B_r(x)\cap A)}{\m (B_r(x))} = 1,
\end{equation*}
if this is satisfied we will call $x$ a density point of $A$.
\end{theorem}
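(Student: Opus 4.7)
The plan is to follow the classical Hardy--Littlewood--Wiener scheme, adapted to the doubling metric measure setting. Introduce the centered maximal operator
\begin{equation*}
    Mf(x) := \sup_{r>0}\, \fint_{B_r(x)} |f| \de \m ,
\end{equation*}
and aim to prove a weak-type $(1,1)$ estimate
\begin{equation*}
    \m\big(\{x\in \X \suchthat Mf(x) > \lambda\}\big) \leq \frac{C}{\lambda}\int_{\X} |f|\de \m, \qquad \lambda>0,
\end{equation*}
with a constant $C$ depending only on the local doubling constant $C_D$. Since $\m(\X)<\infty$, localising to balls of radius at most a fixed $R>0$ is harmless, and the local doubling bound $\m(B_{5r})\leq C_D^3\, \m(B_r)$ is then available with a uniform constant.

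For the weak-type estimate I would use the standard $5r$-covering lemma, which is available in every metric space: from the family of balls $B_{r_x}(x)$ with $x$ in the level set $E_\lambda:=\{Mf>\lambda\}$ and $\fint_{B_{r_x}(x)}|f|\de\m>\lambda$, extract a pairwise disjoint subfamily $\{B_{r_i}(x_i)\}_i$ with $E_\lambda\subseteq \bigcup_i B_{5r_i}(x_i)$; then
\begin{equation*}
    \m(E_\lambda)\leq \sum_i \m\big(B_{5r_i}(x_i)\big)\leq C_D^3 \sum_i \m\big(B_{r_i}(x_i)\big)\leq \frac{C_D^3}{\lambda}\int_{\X}|f|\de\m,
\end{equation*}
using disjointness in the last step. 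With the maximal inequality in hand, the differentiation statement follows by a density argument: Lipschitz functions with bounded support are dense in $L^1(\X,\m)$, and for a continuous $g$ the averages $\fint_{B_r(x)} g\de\m$ converge to $g(x)$ at every point by uniform continuity on a compact neighborhood. Given $\varepsilon>0$ and a decomposition $f=g+h$ with $\|h\|_{L^1}<\varepsilon^2$, the set on which $\limsup_{r\to 0}\big|\fint_{B_r(x)} f\de\m - f(x)\big|>2\varepsilon$ is contained in $\{Mh>\varepsilon\}\cup\{|h|>\varepsilon\}$, whose measure is at most $(C_D^3+1)\varepsilon$ by the maximal estimate and Markov's inequality. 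Letting $\varepsilon\to 0$ through a countable sequence yields a single $\m$-null exceptional set.

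The density-point statement is an immediate consequence upon specializing to $f=\mathbf 1_A\in L^1(\X,\m)$. The only real obstacle is the covering step: general metric spaces do not admit Besicovitch-type covering theorems with dimension-free constants, so one is forced to pass through $5r$-balls and pay a doubling factor when descending back to the original balls. It is precisely the (local) doubling hypothesis that makes this loss controllable, which is why the theorem holds in our setting even in the absence of any smooth structure.
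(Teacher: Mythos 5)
Your argument is correct and is precisely the classical maximal-function proof (restricted maximal operator, $5r$-covering lemma, weak-type $(1,1)$ bound, then a density argument) that the paper does not reproduce but delegates to the cited reference of Heinonen \cite{MR1800917}. One cosmetic remark: in the density step you do not need uniform continuity on a compact neighborhood (local compactness is not assumed at this level of generality); continuity of $g$ at the point $x$ already gives $\fint_{B_r(x)} g \de\m \to g(x)$, and the rest of your splitting $f=g+h$ argument goes through unchanged.
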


Before presenting the notion of locally Minkowski space, we introduce the following notation, that will help us in the formulation: 
\begin{equation*}
    \di_r := \frac 1 r \cdot \di.
\end{equation*}
Moreover, we clarify that we say that a norm $\norm{\cdot}$ on $\R^n$ is $C^1$ if it is $C^1$ outside the origin.

\begin{definition}\label{def:locMink}
We say that $(\X,\di,\m)$ is a \emph{locally Minkowski space} if for $\m$-almost every $x\in\X$ there exist $n(x)\leq N <\infty$  and a family of maps 
\begin{equation*}
    \Big\{i^{x,r}: (B_r(x), \di_r) \to  (\R^{n(x)},\norm{\cdot})\Big\}_{r > 0}
\end{equation*}
where $\norm{\cdot}$ is a $C^1$-norm, satisfying the following properties:
\begin{enumerate}
    \item $i^{x,r}(x)=0$ for every $r>0$,
    \item for every $\varepsilon>0$ there exists $r(\varepsilon)$ such that for every $r < r(\varepsilon)$ the map $i^{x,r}$ is $\varepsilon$-bi-Lipschitz and $B_{1-\varepsilon}(0)\subseteq i^{x,r} (B_r(x))$; 
    \item there exists a constant $c(x)$ such that 
    \begin{equation}\label{eq:pfmea}
        (1-\varepsilon)c(x) \cdot \Leb^{n(x)}\leq (i^{x,r})_\# \m^{x,r} \leq (1+\varepsilon)c(x) \cdot \Leb^{n(x)},
    \end{equation}
    on the set $i^{x,r} (B_r(x))$, where $\m^{x,r} :=\frac{\m}{\m(B_r(x))}$.
\end{enumerate}
Every point where this property is satisfied will be called $\textit{regular}$ point and the set of regular points will be denoted by $\mathcal{R}(\X)$.
\end{definition}
\begin{remark}\label{rmk:deflocMink}
(i) Assumption 1 is not really significant, since it can always be achieved by translation, however, we ask it in order to simplify the last part of assumption 2. \\
(ii) It is easy to realize that $c(x)$ has an explicit representation as $c(x)= \Leb^{n(x)}(B_1(0))^{-1}$. This shows in particular that this constant depends only on $(\R^{n(x)},\norm{\cdot})$.\\
(iii) The requirement $B_{1-\varepsilon}(0)\subseteq i^{x,r} (B_r(x))$ is necessary for the locally Minkowski assumption to really prescribe (locally) the geometry of the metric measure space. In fact, without assuming it, the space would just be locally almost isometric to a subset of some normed space, while the hypothesis $B_{1-\varepsilon}(0)\subseteq i^{x,r} (B_r(x))$ basically guarantees the ``sujectivity of the charts $i^{x,r}$" and consequently the fact that the geometry of the space is locally similar to the one of $(\R^{n(x)},\norm{\cdot})$.\\
{(iv) It is clear from the definition that every locally Minkowski space is locally compact.}
\end{remark}

\begin{figure}[b]
\begin{center}

\begin{tikzpicture} [scale=0.7]

\fill[color=black!17!white] (0,0)--(5.5,2.75)--(5.5,-2.75)--cycle;
\draw[thick](-6,0)--(0,0);
\draw[thick](0,0)--(5,2.5);
\draw[thick](0,0)--(5,-2.5);
\draw[thick,dotted](5,2.5)--(6,3);
\draw[thick,dotted](5,-2.5)--(6,-3);
\node at (-3,0)[label=north:$\mathcal H^1$] {};

\node at (3,0)[label=east:$\Leb^2$] {};
\end{tikzpicture}

\caption{Example of a locally Minkowski space having non-constant dimension.}
\label{fig:example}
\end{center}
\end{figure}
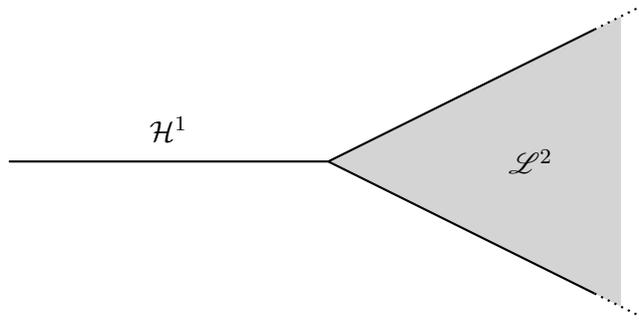

We bring the attention of the reader to the fact that a locally Minkowski space does not need to have constant (topological) dimension. In fact, consider the space $\X$ represented in Figure \ref{fig:example} obtained by gluing a ray and a closed cone in $\R^2$. We endow this space with the measure $\m$, obtained by summing the Hausdorff measure $\mathcal{H}^1$ on the ray and the Lebesgue measure $\Leb^2$ on the cone, and with a distance $\di$ induced by a $C^1$-norm $\norm{\cdot}$ in the following way
\begin{equation*}
    \di(x,y) = \inf\left\{ \int_0^1 \norm{\dot{\gamma(t)}} \de t : \gamma\in AC([0,1],\X), \, \gamma(0)=x, \, \gamma(1)=y\right\}.
\end{equation*}
It is easy to realize that the resulting metric measure space $(\X,\di,\m)$ is locally Minkowski, indeed the set of regular points consists of all the points in the interior of the cone and all the points of the ray except for the vertex of the cone, which clearly has full $\m$-measure.

This observation is particularly interesting in relation to the work of {Bruè} and {Semola} in \cite{MR4156601}, according to which every $\RCD(K,N)$ space has constant dimension. This means that, if we start with a metric measure space $(\X,\di,\m)$ having non-constant dimension and we construct a distance $\di'$ equivalent to $\di$ and for which $(\X, \di', \m)$ is infinitesimally Hilbertian, the metric measure space $(\X,\di',\m)$ will not be $\RCD(K,N)$, even if $(\X,\di,\m)$ is a $\CD$ space. Notice that, according to \cite{magnabosco2021example}, it is possible to construct a $\CD$ space having non-constant dimension.

\subsection{The Cheeger Energy}

In the following, for a fixed $f \in L^2(\X, \m)$, we denote by $\{g_i\}_{i \in \N} \in L^2(\X, \m)$ a sequence of functions such that:
\begin{enumerate}
    \item  for every $i \in \N$, $g_i$ is an upper gradient of $f_i \in L^2(\X, \m)$, 
    \item the sequence $\{f_i\}_{i \in \N}$ is such that $f_i \to f$ in $L^2(\X, \m)$ as $i \to +\infty$.
\end{enumerate}
Hence for any $f \in L^2(\X, \m)$ we set
\[
|| f ||_{1, 2} := \| f \|_{L^2(\X, \m)} + \inf_{\{g_i\}} \liminf_{i \to \infty} || g_i ||_{L^2 (\X, \m)}.
\]

\begin{definition}[The Sobolev space $H_{1,2}(\X, \di, \m)$]
We define the \emph{Sobolev space} $H_{1,2}(\X, \di, \m)$ as the subspace
\[
H_{1, 2}(\X, \di, \m) := \{ f \in L^2(\X, \m) : || f ||_{1, 2} < \infty \}
\]
equipped with the norm $|| \cdot ||_{1, 2}$.
\end{definition}

\noindent Then $(H^{1, 2}(\X, \di, \m), \| \cdot \|_{1, 2})$ is a Banach space (see \cite[Theorem 2.7]{MR1708448}), which is Hilbert provided that $(\X, \di, \m)$ is infinitesimally Hilbertian, in the sense of Definition \ref{def:infHil}.

\begin{definition}[Cheeger Energy]
We call \emph{Cheeger energy} the functional $\Ch \colon L^2(\X, \m) \to [0, + \infty]$ defined by setting for any $f \in L^2(\X, \m)$
\begin{equation}\label{eq:ChEn}
\Ch [f] := \big(|| f ||_{1, 2} - || f ||_{L^2(\X, \m)} \big)^2 = \inf_{\{g_i\}} \liminf_{i \to \infty} || g_i ||^2_{L^2 (\X, \m)}.
\end{equation}
\end{definition}

In order to give an explicit characterization of the Cheeger energy in terms of an integral of a local object, we introduce the notion of the \emph{minimal generalized upper gradient}. We say that a function $g \in L^2(\X, \m)$ is a \emph{generalized upper gradient} for $f \in L^2(\X, \m)$ if there exist two sequences $\{f_i\}, \{g_i\}_{i \in \N}$, such that:
\begin{enumerate}
    \item $f_i \to f$ and $g_i \to g$ in $L^2(\X, \m)$,
    \item $g_i$ is an upper gradient for $f_i$, for every $i \in \N$.
\end{enumerate}
The set of all the generalized upper gradients for a function $f \in L^2(\X, \m)$ is a closed convex subset of $L^2(\X, \m)$, which is in particular non-empty when $f \in H_{1, 2}(\X, \di, \m)$, as proved in \cite[Theorem 2.10]{MR1708448}. We denote by $|D f|_C$ the minimal generalized upper gradient, namely the unique element in $L^2(\X, \m)$ with the property that $|| f ||_{1, 2} = || f ||_{L^2(\X, \m)} + || | D f |_C ||_{L^2(\X, \m)}$. Hence, for any $f \in H_{1, 2}(\X, \di, \m)$, the Cheeger energy takes the form
\[
\Ch[f] = \int_\X | D f |_C^2 \, \de \m.
\]

\begin{remark}[Different definitions of Cheeger energy] 
This definition was first introduced by Cheeger in \cite{MR1708448} with the name of \emph{upper gradient 2-energy} of $f$ and can be compared with more recent definitions of energy introduced in the setting of metric measure spaces.

For example, Shanmugalingam introduced in \cite{MR1809341} the \emph{Newtonian space} $N^{1, 2}(\X, \di, \m)$, consisting of all the functions $f \colon \X \to \R$ such that $\int_\X f^2 \, \de \m$ and that
\begin{equation}\label{eq:new}
|f(\gamma_1) - f(\gamma_0)| \le \int_\gamma G
\end{equation}
holds for some $G \in L^2(\X, \m)$, out of a $\text{Mod}_2$-null set of curves. The potential $|D f|_S$ is then defined as the function $G$ in \eqref{eq:new} with minimal $L^2$-norm. In particular, Shanmugalingam proved also the connection between Newtonian spaces and Cheeger's functional: $f \in H_{1, 2}$ if and only if $\tilde f \in N^{1, 2}(\X, \di, \m)$ in the Lebesgue equivalence class of $f$, and the two notions of gradients $| D f|_S$ and $| D f |_C$ coincide $\m$-a.e. in $\X$. 

Another definition of energy was introduced in \cite{MR3152751}. In this case, the functional $\Ch_\ast$ is obtained using a relaxation procedure very similar to the one proposed by Cheeger, but the approximating functions $f_i$ are  now required to be Lipschitz and the upper gradients $g_i$ are replaced by the so-called \emph{relaxed gradients} of $f_i$ (see\cite[Definition 4.2]{MR3152751}). This functional can still be represented by the integration of a local object, denoted by $| D f |_\ast$, as
\[
\Ch_\ast[f]:= \dfrac12 \int_\X | D f|^2_\ast \, \de \m.
\]
The Sobolev space $W_\ast^{1, 2}(\X, \di, \m)$ is then defined as the domain of $\Ch_\ast$ endowed with the norm
\[
|| f ||_{W_\ast^{1, 2}} := \sqrt{|| f ||^2_{L^2(\X, \m)} + || | D f|_\ast ||^2_{L^2(\X, \m)}}.
\]
In particular, the space $(W_\ast^{1, 2}(\X, \di, \m), || \cdot ||_{W_\ast^{1, 2}})$ is complete. 

Directly from the definitions we have that for any $f \in L^2(\X, \m)$ it holds $|D f|_C \le | D f|_\ast$ $\m$-a.e., which in turn implies that the Cheeger energy $\Ch$ is smaller than the functional $\Ch_\ast$ and that
\[
W_\ast^{1, 2}(\X, \di, \m) \subseteq H_{1,2}(\X, \di, \m).
\]
However a series of remarkable results in \cite{MR3152751} (see specifically Theorem 6.2 and Theorem 6.3) guarantee that $\Ch = \Ch_\ast$ for any complete and separable metric spaces $(\X, \di)$ equipped with a locally finite measure.
\end{remark}

{The following two theorems show how the doubling condition and the validity of a Poincaré inequality ensure some nice properties related to the Cheeger energy and to the space $H_{1, 2}(\X, \di, \m)$.}

\begin{theorem}{\cite[Theorem 4.48]{MR1708448}} In a metric measure space $(\X,\di,\m)$ satisfying the doubling condition and supporting a Poincar\'e inequality, the norm on $H_{1, 2}(\X, \di, \m)$ is equivalent to a uniformly convex norm and, in particular, the space $H_{1, 2}(\X, \di, \m)$ is reflexive.
\end{theorem}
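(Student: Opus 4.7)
The plan is to build an equivalent Hilbert (hence automatically uniformly convex) norm on $H_{1,2}(\X,\di,\m)$ using Cheeger's measurable differentiable structure. Under doubling and the Poincar\'e inequality this structure provides a countable Borel partition $\X=\bigsqcup_\alpha U_\alpha$ (modulo an $\m$-null set) together with Lipschitz charts $\varphi_\alpha\colon U_\alpha\to\R^{N(\alpha)}$ of uniformly bounded dimension $N(\alpha)\leq N_0$ (the bound coming from doubling), an $\m$-a.e.\ chart differential $\nabla_\alpha f$ for every $f\in\Lipfunc(\X)$, and a Borel family of norms $\|\cdot\|_x$ on $\R^{N(\alpha)}$ satisfying $|Df|_C(x)=\|\nabla_\alpha f(x)\|_x$ a.e.

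For each regular $x\in U_\alpha$ I will introduce the John ellipsoid $E_x$, i.e.\ the unique ellipsoid of maximal volume contained in the unit ball $\{v:\|v\|_x\leq 1\}$, and denote by $|\cdot|_{E_x}$ the Hilbert norm whose unit ball is $E_x$. John's theorem gives the pointwise comparison
\begin{equation*}
\|v\|_x \leq |v|_{E_x} \leq \sqrt{N_0}\,\|v\|_x,\qquad v\in\R^{N(\alpha)},
\end{equation*}
and the uniqueness of $E_x$ combined with the Borel dependence of $\|\cdot\|_x$ on $x$ makes $x\mapsto|\cdot|_{E_x}$ Borel measurable. Defining
\begin{equation*}
\tilde\Ch[f]:=\sum_\alpha\int_{U_\alpha}|\nabla_\alpha f(x)|^2_{E_x}\,\de\m(x),\qquad \|f\|_\sim^2:=\|f\|^2_{L^2(\X,\m)}+\tilde\Ch[f],
\end{equation*}
one obtains a positive semidefinite quadratic form on $\Lipfunc(\X)\cap L^2(\X,\m)$ (bilinearity of each $\langle\cdot,\cdot\rangle_{E_x}$ combined with linearity of the chart differential), so $\|\cdot\|_\sim$ is a genuine Hilbert norm on its domain of finiteness. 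The pointwise John estimate integrates to $\Ch[f]\leq\tilde\Ch[f]\leq N_0\,\Ch[f]$, hence to $\|\cdot\|_\sim\simeq\|\cdot\|_{1,2}$ on Lipschitz functions.

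The density of $\Lipfunc(\X)\cap L^2(\X,\m)$ in $H_{1,2}(\X,\di,\m)$, another consequence of doubling plus Poincar\'e, then extends both the identity $|Df|_C=\|\nabla_\alpha f\|_{\cdot}$ and the norm equivalence $\|\cdot\|_\sim\simeq\|\cdot\|_{1,2}$ to every Sobolev function; thus $\|\cdot\|_{1,2}$ is equivalent to a Hilbert norm, and reflexivity follows from the Milman--Pettis theorem. I expect the main obstacle to be the first step: Cheeger's chart structure and the existence of the Borel fibre norm $\|\cdot\|_x$ representing $|Df|_C$ are the deep geometric-analytic consequences of doubling + Poincar\'e, and they carry all the weight of the argument; by comparison, the John-ellipsoid device and the final reflexivity deduction are essentially soft. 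A minor technical check is the Borel measurability of $x\mapsto E_x$, which is handled by the variational characterization of the John ellipsoid and a standard measurable-selection argument.
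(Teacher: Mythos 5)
This statement is imported verbatim from Cheeger \cite[Theorem 4.48]{MR1708448}; the paper gives no proof of its own, so the only comparison available is with Cheeger's original argument, which your proposal essentially reconstructs: measurable charts of uniformly bounded dimension with a Borel fibre norm representing $|Df|_C$, fibrewise replacement of that norm by an inner-product norm with a dimension-only comparison constant, and extension from the dense class of Lipschitz functions. Your outline is correct and follows the same route as the cited source, with the genuinely hard content (existence, a.e.\ uniqueness and linearity of the chart differential, the bound $N(\alpha)\leq N_0$, and density of Lipschitz functions in $H_{1,2}$) correctly attributed to Cheeger's structure theorems rather than re-proved.
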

 
\begin{theorem}{\cite[Theorem 4.24, Theorem 5.1]{MR1708448}}\label{th:Cheeger}
In a metric measure space $(\X,\di,\m)$ satisfying the doubling condition and supporting a Poincar\'e inequality, 
 the subspace of all the Lipschitz functions is dense in $H_{1,2}(\X, \di, \m)$ and $|D f|_C(x) = \Lip[f](x)$ for $\m$-a.e. $x \in \X$.
 \end{theorem}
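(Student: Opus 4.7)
The statement has two halves that can be attacked in sequence: density of $\Lipfunc(\X)$ in $H_{1,2}(\X,\di,\m)$, and the pointwise identification $|Df|_C = \Lip[f]$ almost everywhere. The easy half of the second assertion is essentially tautological and I would dispatch it first: for Lipschitz $f$, the function $\Lip[f]$ is an upper gradient (standard, using absolute continuity of $t\mapsto f(\gamma_t)$ along an AC curve together with the pointwise definition \eqref{eq:pointlip}), so by the very definition of minimal generalized upper gradient, $|Df|_C\le \Lip[f]$ $\m$-a.e., and in particular $\Lipfunc(\X)\cap L^2\subseteq H_{1,2}$. Combining this with Lemma~\ref{lem:meanint}-type controls immediately supplies the $\le$ direction.

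For the density, my plan is to use a discrete-convolution scheme of Hajłasz--Koskela type, which the doubling condition makes available. Given $f\in H_{1,2}$ with approximating sequences $f_i\to f$ in $L^2$ and upper gradients $g_i$ with $\liminf \|g_i\|_{L^2}<\infty$, I would first truncate and localize so that each $f_i$ is bounded with bounded support, then for each scale $\lambda>0$ define the sup-convolution $f_i^\lambda(x):=\inf_y\{f_i(y)+\lambda\di(x,y)\}$, which is $\lambda$-Lipschitz. Using Poincaré on balls of radius $\sim 1/\lambda$ together with a maximal-function inequality for $g_i$, one checks that the ``bad set'' $\{f_i\ne f_i^\lambda\}$ has measure going to zero as $\lambda\to\infty$, and that $\Lip[f_i^\lambda]\le C\,M(g_i)$ on the good set, where $M$ is the Hardy--Littlewood maximal operator (bounded on $L^2$ thanks to doubling). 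A diagonal selection then yields Lipschitz approximants of $f$ whose pointwise Lipschitz constants stay bounded in $L^2$, giving density.

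The main obstacle is the $\ge$ inequality $\Lip[f]\le |Df|_C$ $\m$-a.e. Here I would argue as follows. Fix a Lipschitz representative $f$ (by density it suffices to test on these) and let $g$ be an upper gradient in $L^2$. Iterating the Poincaré inequality \eqref{eq:poincare} on a dyadic chain of balls $B_{r/2^k}(x)$, one obtains a telescoping bound $|f(y)-f(x)|\le C\,\di(x,y)\bigl(M_{\Lambda\di(x,y)}(g^2)(x)\bigr)^{1/2}+\text{error}$ valid for $y\in B_r(x)$, where $M_R$ is the restricted maximal function. Dividing by $\di(x,y)$ and taking $\limsup$ as $y\to x$ at a Lebesgue point of $g^2$ gives $\Lip[f](x)\le C\,g(x)$ almost everywhere, but with a constant $C$ depending on $C_P$ and $C_D$. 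To upgrade the constant to $1$—the truly delicate point and the heart of Cheeger's analysis—I would apply this estimate not to $f$ itself but to the affine-like remainder $f-L$ built from the best local ``linear'' approximation of $f$ at $x$ in the sense of Proposition~\ref{prop:pointlip}: at $\m$-a.e.\ $x$ the characterization of $\Lip[f](x)$ as a radial limit allows one to subtract off a function realizing the correct slope, so that the constant $C$ on the right is absorbed into an arbitrarily small remainder as $r\to 0$. Passing to the infimum over all generalized upper gradients and invoking Proposition~\ref{prop:pointlip} to identify the resulting pointwise object with $\Lip[f]$ closes the loop and yields the equality $\m$-a.e.
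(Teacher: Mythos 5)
The paper itself does not prove this statement: it is imported verbatim from Cheeger \cite[Theorems 4.24 and 5.1]{MR1708448}, so there is no in-paper argument to compare against, and the real question is whether your sketch would reconstitute Cheeger's theorem. Two of your three components are sound in outline: that $\Lip[f]$ is an upper gradient of a Lipschitz $f$, hence $|Df|_C\le\Lip[f]$ $\m$-a.e., is standard; and the density of Lipschitz functions via Lipschitz truncation/sup-convolution plus the maximal function is the usual Haj\l asz--Koskela route (though you would still have to check convergence in the $H_{1,2}$ norm, i.e.\ of the energies, not merely that $\m(\{f_i\ne f_i^\lambda\})\to 0$, and you must run the telescoping estimate on the limit function through a generalized upper gradient, in the spirit of Lemma~\ref{lem:maximal}).

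The genuine gap is exactly where you locate the difficulty, the inequality $\Lip[f]\le|Df|_C$ with constant $1$. The chain-of-balls/maximal-function argument (essentially Lemma~\ref{lem:maximal} of this paper) only yields $\Lip[f](x)\le C(C_D,C_P)\,g(x)$ a.e., and your proposed upgrade --- subtracting ``the best local linear approximation $L$ of $f$ at $x$'' --- is not available in a general metric measure space. Proposition~\ref{prop:pointlip} characterizes the \emph{number} $\Lip[f](x)$ as a radial limit; it does not produce a competitor function realizing that slope, because there is no linear structure to define one. Worse, even granting such an $L$, upper gradients do not subtract: from an upper gradient $g$ of $f$ and an upper gradient $h$ of $L$ one only obtains $g+h$ as an upper gradient of $f-L$, never anything like $g-h$, so the structural constant $C$ cannot be ``absorbed into a small remainder'' by this device. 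Making the subtraction idea rigorous is precisely the content of Cheeger's differentiation machinery (harmonic and generalized linear functions, blow-ups of the space and of $f$) --- the very tools this paper re-imports in Lemma~\ref{lem:lin} and Proposition~\ref{prop:differential} --- or of comparably heavy alternatives; as written, your final step is an appeal to a structure that has not been constructed, so the proposal does not prove the equality $|Df|_C=\Lip[f]$ a.e.
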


Finally we state the following result which is a consequence of \cite[Theorem 6.3]{MR3152751}:

\begin{prop}\label{prop:Chasrelax}
In a metric measure space $(\X,\di,\m)$, the Cheeger energy is equal to the relaxation in $L^2(\X,\m)$ of the functional
\begin{equation*}
    \begin{cases}
    \int \Lip[f]^2 \de \m \quad &\text{if }f\in \Lipfunc(\X)\\
    +\infty & \text{if }f\in L^2(\X,\m) \setminus \Lipfunc(\X),
    \end{cases}
\end{equation*}
that is 
\begin{equation*}
    \Ch [u] = \inf_{\Lipfunc(\X)\supset (f_n)\to u}\liminf_{n\to \infty} \int \Lip[f_n]^2 \de \m.
\end{equation*}
\end{prop}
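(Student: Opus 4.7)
The plan is to establish the two inequalities in the asserted identity separately, bridging the pointwise Lipschitz constant $\Lip[f]$ used in the statement with the asymptotic Lipschitz constant $\mathrm{lip}_a[f]$ that appears in the Ambrosio--Gigli--Savar\'e framework.

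First, I would prove $\Ch[u] \le \inf_{(f_n) \to u} \liminf_n \int \Lip[f_n]^2 \de \m$. For any $f \in \Lipfunc(\X)$, the function $\Lip[f]$ is itself an upper gradient of $f$: for an absolutely continuous curve $\gamma$, the composition $f \circ \gamma$ is Lipschitz, hence differentiable a.e., and its derivative is dominated by $\Lip[f](\gamma_t)|\dot{\gamma}_t|$, which integrates to yield the upper gradient inequality. Consequently $\Ch[f] \le \int \Lip[f]^2 \de \m$ by the very definition of the Cheeger energy. Since $\Ch$ is $L^2$-lower semicontinuous (a standard diagonal argument starting from its relaxed definition via sequences of upper gradients), for any sequence $(f_n) \subset \Lipfunc(\X)$ with $f_n \to u$ in $L^2(\X,\m)$ one obtains
\begin{equation*}
\Ch[u] \;\le\; \liminf_n \Ch[f_n] \;\le\; \liminf_n \int \Lip[f_n]^2 \de \m,
\end{equation*}
and taking the infimum over all such sequences yields the claimed inequality.

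For the converse inequality, I would invoke \cite[Theorem 6.3]{MR3152751}, which identifies $\Ch$ with the $L^2$-relaxation over Lipschitz functions of $f \mapsto \int \mathrm{lip}_a[f]^2 \de \m$, where
\begin{equation*}
\mathrm{lip}_a[f](x) \;:=\; \limsup_{r \to 0^+}\; \sup_{y,z \in B_r(x),\, y \ne z} \frac{|f(y) - f(z)|}{\di(y,z)}
\end{equation*}
denotes the asymptotic Lipschitz constant. Since $\Lip[f](x) \le \mathrm{lip}_a[f](x)$ pointwise for every Lipschitz $f$, any admissible sequence $f_n \to u$ in $L^2(\X,\m)$ satisfies $\int \Lip[f_n]^2 \de \m \le \int \mathrm{lip}_a[f_n]^2 \de \m$, from which
\begin{equation*}
\inf_{(f_n) \to u} \liminf_n \int \Lip[f_n]^2 \de \m \;\le\; \Ch[u],
\end{equation*}
giving the reverse inequality and hence equality.

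The main subtlety I expect is to ensure the compatibility of the two relaxation procedures, which ultimately reduces to the elementary pointwise comparison $\Lip[f] \le \mathrm{lip}_a[f]$ combined with the $L^2$-lower semicontinuity of $\Ch$. Notice that, since the proposition is formulated for an arbitrary metric measure space, without assuming the doubling condition or the Poincar\'e inequality, we cannot invoke Theorem \ref{th:Cheeger} (which would give directly $|Df|_C = \Lip[f]$ for Lipschitz $f$), and must instead rely on the general AGS relaxation machinery to bound the $\mathrm{lip}_a$-relaxation by $\Ch$.
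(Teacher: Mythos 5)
Your proposal is correct and matches the paper's route: the paper gives no separate argument but simply derives the statement from \cite[Theorem 6.3]{MR3152751}, which is exactly the engine behind your nontrivial inequality, while your easy direction (that $\Lip[f]$ is an upper gradient of a Lipschitz $f$, plus $L^2$-lower semicontinuity of $\Ch$) is the standard complement. Only a cosmetic remark: for an absolutely continuous $\gamma$ the composition $f\circ\gamma$ is absolutely continuous rather than Lipschitz (or reparametrize $\gamma$ by arc length), which is all the a.e.\ differentiability argument needs.
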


\subsection{The Korevaar-Schoen Energy}\label{sec:ksenergy}

For every $p\in(1,\infty)$ and $r>0$, we define the $p$-energy density of size $r$ in the sense of Korevaar-Schoen as 
\begin{equation*}
    \ks_{p, r}[f](x):=\left|\fint_{B_{r}(x)} \frac{|f(y)- f(x)|^p}{r^{p}} \de \m(y)\right|^{1 / p},
\end{equation*}
for every $f\in \Lipfunc(\X)$. Accordingly, we introduce the $p$-Korevaar-Schoen energy of $f\in \Lipfunc(\X)$ at scale $r$ as
\begin{equation*}
    \Ks_{p,r}[f]:= \int \ks^p_{p, r}[f](x) \de \m(x).
\end{equation*}
Since the aim of this energy is to describe the gradient of a function or, more generally, an infinitesimal behaviour, it is natural to consider the limit as $r\to 0$ and define
\begin{equation}\label{eq:defKS}
    \Ks_p[f]:= \liminf_{r \to 0}  \Ks_{p,r}[f]
\end{equation}
We can then define an energy form $\tilde \Ks_p$ on $L^p(\X,\m)$ by taking the relaxation of $\Ks_p$, that is
\begin{equation*}
    \tilde\Ks_p[u] = \inf_{\Lipfunc \supset (f_n)\to u} \liminf_{n\to \infty} \Ks_p[f_n].
\end{equation*}

\begin{remark}[About this definition]
(i) The limit functional when the scale $r$ goes to $0$ is sometimes defined using the $\limsup$ instead of the $\liminf$ (see \eqref{eq:defKS}). In this work we decided to be consistent with the approach of (\cite{MR4316816}), which is our main reference. Nevertheless, since we are going to prove that the limit exists when $r$ tends to $0$, this is not an issue.\\
(ii) Usually, and in particular in (\cite{MR4316816}), the definition of the Korevaar-Schoen norm does not involve a relaxation procedure, while it is done by considering directly the potentials $\ks_{p,r}$ on $L^p(\X,\m)$. Once again this is not a big problem in this work, since in our context the two approaches coincide. This fact will be pointed out also in the following, when it is necessary.
\end{remark}
In the following we will only deal with the $2$-Korevaar-Schoen energy and in order to ease the notation we will drop the $2$ at the subscript, that is $\ks_r=\ks_{2,r}$, $\Ks_r=\Ks_{2,r}$, $\Ks=\Ks_2$ and $\tilde\Ks=\tilde\Ks_2$. The $2$-Korevaar-Schoen energy at positive scales $r$ is a quadratic form on $\Lipfunc(\X)$, as shown by the next result.

\begin{prop}\label{prop:KSrquad}
For every $r>0$, $\Ks_r$ is a quadratic form on $\Lipfunc(\X)$, that is 
\begin{equation*}
    \Ks_r[f+g] + \Ks_r[f-g]= 2 \Ks_r[f]+ 2 \Ks_r[g],
\end{equation*}
for every pair $f,g\in \Lipfunc(\X)$.
\end{prop}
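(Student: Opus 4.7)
The plan is to reduce the claimed parallelogram identity for the functional $\Ks_r$ to the scalar parallelogram identity applied pointwise under the integral. The key observation is that $\Ks_r$ has an explicit double-integral representation
\begin{equation*}
\Ks_r[f] = \int_\X \fint_{B_r(x)} \frac{|f(y)-f(x)|^2}{r^2}\, \de\m(y)\, \de\m(x),
\end{equation*}
and the squared modulus inside is a quadratic expression in the increment $f(y)-f(x)$, which is itself linear in $f$. Quadratic forms are stable under integration, so the identity should follow at once.

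Concretely, I would first note the elementary identity $|a+b|^2 + |a-b|^2 = 2|a|^2 + 2|b|^2$ for real numbers $a,b$, and apply it with $a := f(y) - f(x)$ and $b := g(y) - g(x)$. Since $(f+g)(y)-(f+g)(x) = a+b$ and $(f-g)(y)-(f-g)(x) = a-b$, this yields the pointwise (in $(x,y)$) identity
\begin{equation*}
|(f+g)(y)-(f+g)(x)|^2 + |(f-g)(y)-(f-g)(x)|^2 = 2|f(y)-f(x)|^2 + 2|g(y)-g(x)|^2.
\end{equation*}

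Next, I would divide by $r^2$, integrate in $y$ over $B_r(x)$ against the probability measure $\m(B_r(x))^{-1}\de\m$, and then integrate in $x$ against $\de\m$. Since all four integrands are nonnegative and $f,g\in\Lipfunc(\X)$ ensures that each term is finite (the space has finite measure and $\X$ is bounded or at least each integrand is bounded by $(\Lipglob[f]+\Lipglob[g])^2$ on the support), the linearity of the integral gives the desired equality
\begin{equation*}
\Ks_r[f+g] + \Ks_r[f-g] = 2\Ks_r[f] + 2\Ks_r[g].
\end{equation*}

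There is no genuine obstacle here: the whole point is that $\ks_{2,r}^2[f](x)$ is the $L^2$-squared-norm of the increment map $y\mapsto (f(y)-f(x))/r$ on the measure space $(B_r(x),\m(B_r(x))^{-1}\m\mres B_r(x))$, and $L^2$-squared norms always satisfy the parallelogram law. The only minor bookkeeping is to check integrability so that the linear combinations of integrals are justified, which is immediate from the Lipschitz bound $|f(y)-f(x)|\le \Lipglob[f]\cdot \di(x,y) \le r\Lipglob[f]$ on $B_r(x)$ and the finiteness of $\m$.
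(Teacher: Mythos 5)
Your argument is correct and is essentially the paper's own proof: both apply the scalar parallelogram identity $|a+b|^2+|a-b|^2=2|a|^2+2|b|^2$ pointwise to the increments $a=f(y)-f(x)$, $b=g(y)-g(x)$ and then use linearity of the (double) integral in the definition of $\Ks_r$. Your extra integrability remark, via the bound $|f(y)-f(x)|\le r\,\Lipglob[f]$ on $B_r(x)$ and the finiteness of $\m$, is harmless and consistent with Remark \ref{rmk:unifboundks}.
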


\begin{proof}
This is actually a very easy consequence of the definition of the potentials $\ks_r$, since for every $f,g\in \Lipfunc(\X)$ it holds 
\begin{align*}
    \Ks_r[f+g] &+ \Ks_r[f-g]\\
    &= \int \fint_{B_{r}(x)} \frac{|f(y) + g(y)- f(x)-g(x)|^2}{r^{2}} +\frac{|f(y) - g(y)- f(x)+g(x)|^2}{r^{2}}  \de \m(y) \de \m (x)\\
    &= 2 \int \fint_{B_{r}(x)} \frac{|f(y) - f(x)|^2}{r^{2}} +\frac{|g(y)- g(x)|^2}{r^{2}}  \de \m(y) \de \m (x)\\
    &= 2 \Ks_r[f]+ 2 \Ks_r[g].
\end{align*}
This ensures that $\Ks_r$ is a quadratic form on $\Lipfunc(\X)$.
\end{proof}

\noindent In Section \ref{sec:convpot} we will use this last proposition to prove that also the Korevaar-Schoen energy $\tilde \Ks$ is a quadratic form, in particular it is the quadratic energy from which we will construct a distance which provides an infinitesimally Hilbertian structure on a locally Minkowski space. Our choice of the Korevaar-Schoen energy as reference quadratic energy is justified by a recent result proven by Gigli and Tyulenev. In \cite{MR4316816} (see Proposition 4.19 therein) they prove that on an RCD space the Korevaar-Schoen energy coincide, up to a dimensional constant, to the (quadratic) Cheeger energy.

\section{The intrinsic distance $\di_{\Ch}$}\label{sec:diCh}
In this short section we study the intrinsic distance associated to the Cheeger energy $\Ch$, which is defined as
\begin{equation}\label{eq:defdiCh}
    \di_\Ch(x,y)= \sup \{f(y)-f(x)\,:\, f\in H_{1,2}(\X,\di,\m) \cap\mathcal{C}(\X)\, \text{ and }\, |Df|_C\leq 1 \,\, \m-a.e.\},
\end{equation} 
where $\mathcal{C}(\X)$ denotes the set of continuous functions in $\X$. We point out that this is none other than the definition of intrinsic distance for a general Dirichlet form (see \cite{St95}), specialized to the Cheeger energy. In particular, the main purpose of this section is to prove that the distance $\di_{\Ch}$ is equivalent
to $\di$. To this aim, we follow the strategy developed by Ohta in \cite{MR2231926}, tailoring it to our setting. As a consequence of the equivalence $\di_\Ch\simeq \di$, we will prove that the supremum in \eqref{eq:defdiCh} can be taken over Lipschitz functions.

\begin{definition}[Maximal function]
For every measurable function $f$ and every radius $R>0$, we define the maximal function
\begin{equation*}
    M_R f(x)=\sup _{0<r<R} \fint_{B(x, r)}|f| \de \m.
\end{equation*}
\end{definition}

We are now going to prove a useful lemma, which shows how suitable maximal functions of an upper gradient $g$ of $f$ can provide an Haj\l asz-type estimate (cfr. \cite{MR1401074}) for $f$. This lemma is stated in a slightly different way and proven in \cite{MR1683160}, but we preferred to provide the proof anyway, in order to be self-contained and to avoid confusion to the reader.

\begin{lemma}\label{lem:maximal}
 Fix $R>0$ and let $(\X,\di,\m)$ be a metric measure space satisfying satisfying a doubling condition and supporting a Poincaré inequality, assume that $f$ is measurable and that $g$ is an upper gradient for $f$. Then there exists a constant $C:=C(C_D(R),C_P(2R))$ such that, if $x$ and $y$ are Lebesgue points of $f$ and $\di(x,y)<R$,
 it holds that
 \begin{equation*}
     |f(x)-f(y)| \leq C \di(x, y)\left(\left(M_{2 \Lambda \di(x, y)} g^{2}(x)\right)^{1 / 2}+\left(M_{2 \Lambda \di(x, y)} g^{2}(y)\right)^{1 / 2}\right).
 \end{equation*}
\end{lemma}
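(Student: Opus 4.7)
\smallskip

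The plan is to run the classical telescoping/chaining argument of Hajłasz–Koskela, adapted to use the local doubling and local Poincaré constants at the appropriate scales. Set $d:=\di(x,y)<R$ and form two nested sequences of concentric balls
\[
B_i^x := B_{2^{-i}d}(x), \qquad B_i^y := B_{2^{-i}d}(y), \qquad i\in\N.
\]
Since $x$ and $y$ are Lebesgue points of $f$, by the Lebesgue Differentiation Theorem we have $f_{B_i^x}\to f(x)$ and $f_{B_i^y}\to f(y)$ as $i\to\infty$, so it suffices to control the telescoping sums together with the ``gap'' between $f_{B_0^x}$ and $f_{B_0^y}$.

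First I would bound each telescoping term. For fixed $i$, using $B_{i+1}^x\subseteq B_i^x$ and the doubling condition,
\[
|f_{B_{i+1}^x}-f_{B_i^x}|\le \fint_{B_{i+1}^x}|f-f_{B_i^x}|\de\m \le \frac{\m(B_i^x)}{\m(B_{i+1}^x)}\fint_{B_i^x}|f-f_{B_i^x}|\de\m \le C_D\,\fint_{B_i^x}|f-f_{B_i^x}|\de\m,
\]
and then the Poincaré inequality at radius $2^{-i}d<R$ yields
\[
|f_{B_{i+1}^x}-f_{B_i^x}| \le C_D\,C_P\, 2^{-i}d\,\bigg(\fint_{B_{\Lambda 2^{-i}d}(x)} g^2 \de\m\bigg)^{1/2} \le C_D\,C_P\, 2^{-i}d\,\bigl(M_{2\Lambda d}\,g^2(x)\bigr)^{1/2},
\]
where in the last step I used $\Lambda 2^{-i}d\le \Lambda d<2\Lambda d$ so that the average is dominated by the maximal function. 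Summing the geometric series gives
\[
|f(x)-f_{B_0^x}|\le 2C_D\,C_P\, d\,\bigl(M_{2\Lambda d}\,g^2(x)\bigr)^{1/2},
\]
and exactly the same argument at $y$ controls $|f(y)-f_{B_0^y}|$ by $2C_D C_P d(M_{2\Lambda d}g^2(y))^{1/2}$.

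Next I would bridge $f_{B_0^x}$ and $f_{B_0^y}$ by means of a common enclosing ball $B:=B_{2d}(x)$. Since $\di(x,y)=d$, the triangle inequality gives $B_d(y)\subseteq B_{2d}(x)=B$ as well as the trivial inclusion $B_d(x)\subseteq B$. Doubling (applied at radii $\le 2d<2R$) yields $\m(B)\le C_D\,\m(B_d(x))$ and $\m(B)\le C_D^{\,2}\,\m(B_d(y))$, hence
\[
|f_{B_0^x}-f_B|+|f_{B_0^y}-f_B| \le 2C_D^{\,2}\fint_B |f-f_B|\de\m \le 2C_D^{\,2}\,C_P\,(2d)\,\bigl(M_{2\Lambda d}\,g^2(x)\bigr)^{1/2},
\]
where the Poincaré inequality is used at radius $2d<2R$ on the ball $B$ centred at $x$. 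Finally, combining the three estimates via $|f(x)-f(y)|\le |f(x)-f_{B_0^x}|+|f_{B_0^x}-f_B|+|f_B-f_{B_0^y}|+|f_{B_0^y}-f(y)|$ produces the claimed inequality with $C$ depending only on the constants $C_D(R)$ and $C_P(2R)$ (possibly also on $\Lambda$, which is absorbed).

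\smallskip

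The only mildly delicate point is the bookkeeping of radii, namely checking that every application of doubling and of the Poincaré inequality involves radii strictly smaller than $R$ (resp.\ $2R$), and that every integral average over $B_{\Lambda 2^{-i}d}(x)$ or $B_{2\Lambda d}(x)$ is indeed bounded by $M_{2\Lambda d}g^2(x)$ so that the maximal function on the right-hand side has the stated radius $2\Lambda d$. There is no real analytic obstacle: once the chain of balls is set up correctly, the argument reduces to a geometric series and a single Poincaré estimate on the enclosing ball $B$.
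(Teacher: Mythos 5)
Your proof is correct and follows essentially the same route as the paper's: a telescoping chain over the dyadic balls $B_{2^{-i}d}(x)$ and $B_{2^{-i}d}(y)$ controlled by doubling, the Poincaré inequality and the maximal function, plus a bridge between the two largest balls through the enclosing ball $B_{2d}(x)$. The only differences are cosmetic bookkeeping of constants (e.g.\ bounding each dyadic term directly by $M_{2\Lambda d}g^2$ rather than first by $M_{\Lambda d}g^2$), so there is nothing to change.
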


\begin{proof}
In this following we use the notation $C_D=C_D(R)$ and $C_P=C_P(2R)$.
For every $z\in \X$, $i\in \N$ we call $B_i(z):=B_{2^{-i}\di(x,y)}(z)$ and $\lambda B_i (z):=B_{2^{-i}\lambda \di(x,y)}(z) $ for every $\lambda>0$, since $x$ and $y$ are Lebesgue points of $f$ we have $f_{B_i}\to f(x)$ as $i$ goes to infinity. We can then perform the following estimate:
\begin{align*}
    |f(x)-f_{B_0(x)}| &\leq \sum_{i=0}^\infty |f_{B_{i+1}(x)}-f_{B_{i}(x)}| \leq \sum_{i=0}^\infty \fint_{B_{i+1}(x)} |f - f_{B_{i}(x)}| \de \m \\
    &= \sum_{i=0}^\infty \frac{1}{\m(B_{i+1})}\int_{B_{i+1}(x)} |f - f_{B_{i}(x)}| \de\m \leq C_D \sum_{i=0}^\infty \frac{1}{\m(B_{i})} \int_{B_{i}} |f - f_{B_{i}(x)}| \de\m \\
    & =  C_D \sum_{i=0}^\infty \fint_{B_{i}} |f - f_{B_{i}(x)}| \de\m \leq C_D C_P \cdot \di (x,y) \sum_{i=0}^\infty 2^{-i} \left( \fint_{\Lambda B_i(x) } g^2 \de \m \right) ^\frac 12,
\end{align*}
the third inequality follows from \eqref{eq:doubling}, while the last one is a consequence of \eqref{eq:poincare}.
Moreover, taking into account the definition of the maximal function, it is obvious that for every $i$ it holds
\begin{equation*}
    \fint_{\Lambda B_i(x) } g^2 \de \m \leq M_{ \Lambda \di(x, y)} g^{2}(x).
\end{equation*}
Then, putting together this last two estimate we can conclude that 
\begin{equation*}
    |f(x)-f_{B_0(x)}| \leq C_D C_P \cdot \di (x,y) \big(M_{ \Lambda \di(x, y)} g^{2}(x)\big)^{1/2}.
\end{equation*}
Proceeding in the same way we can also obtain that
\begin{equation*}
    |f(y)-f_{B_0(y)}| \leq C_D C_P \cdot \di (x,y) \big (M_{ \Lambda \di(x, y)} g^{2}(y)\big)^{1/2}.
\end{equation*}
On the other hand it can be easily notice that
\begin{align*}
    |f_{B_{0}(x)}-f_{B_{0}(y)}| & \leq|f_{B_{0}(x)}-f_{2 B_{0}(x)}|+|f_{B_{0}(y)}-f_{2 B_{0}(x)}|\\
    & \leq \fint_{B_{0}(x)}\left|f-f_{2 B_{0}(x)}\right| \de \m + \fint_{B_{0}(y)}\left|f-f_{2 B_{0}(x)}\right| \de \m \\
    & = \frac{1}{\m(B_{0}(x))}\int_{B_{0}(x)}\left|f-f_{2 B_{0}(x)}\right| \de \m + \frac{1}{\m(B_{0}(y))}\int_{B_{0}(y)}\left|f-f_{2 B_{0}(x)}\right| \de \m \\
    & \leq 2 C_D \fint_{2 B_{0}(x)}\left|f-f_{2 B_{0}(x)}\right| \de \m \\ & \leq 4 C_D C_P \cdot \di(x, y)\left(\fint_{2 \Lambda B_{0}(x)} g^{2} d \mu\right)^\frac 12 \\ & \leq 4 C_D C_P \cdot  \di(x, y)\left(M_{2 \Lambda \di(x, y)} g^{2}(x)\right)^{1 / 2} ,
\end{align*}
where we have applied \eqref{eq:doubling} and \eqref{eq:poincare} as before.
Thesis follows combining the last three inequalities, since by definition
\begin{equation*}
    \left(M_{2 \Lambda \di(x, y)} g^{2}(z)\right)^{1 / 2}\geq \left(M_{ \Lambda \di(x, y)} g^{2}(z)\right)^{1 / 2}\qquad \forall z\in \X.
\end{equation*}
\end{proof}

\begin{prop}\label{prop:equivCh}
 Let $(\X,\di,\m)$ be a length
 metric measure space satisfying the doubling condition \eqref{eq:doubling} and the Poincaré inequality \eqref{eq:poincare}, then we have
 \begin{equation*}
     \di \leq \di_\Ch \leq 2 C \cdot \di,
 \end{equation*}
 where $C$ is the constant introduced in Lemma \ref{lem:maximal}.
\end{prop}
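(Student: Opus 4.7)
The plan is to prove the two inequalities separately.

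For the lower bound $\di \leq \di_\Ch$, I would test the supremum in \eqref{eq:defdiCh} with the truncated distance $f_n(z) := \min\{\di(z,x), n\}$. Each $f_n$ is continuous and $1$-Lipschitz, bounded by $n$, so $f_n \in L^\infty(\X,\m) \cap \Lipfunc(\X) \subseteq L^2(\X,\m)$ by finiteness of $\m$. Since $\Lip[f_n] \leq 1$ and Theorem \ref{th:Cheeger} gives $|Df_n|_C = \Lip[f_n]$ $\m$-almost everywhere, it follows that $\Ch[f_n] \leq \m(\X) < \infty$ and $f_n \in H_{1,2}(\X,\di,\m) \cap \mathcal C(\X)$ is admissible in \eqref{eq:defdiCh}. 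Hence
\begin{equation*}
\min\{\di(x,y), n\} = f_n(y) - f_n(x) \leq \di_\Ch(x,y),
\end{equation*}
and letting $n \to \infty$ gives $\di(x,y) \leq \di_\Ch(x,y)$.

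For the upper bound $\di_\Ch \leq 2C\, \di$, fix $f \in H_{1,2}(\X,\di,\m) \cap \mathcal C(\X)$ with $|Df|_C \leq 1$ $\m$-almost everywhere and arbitrary $x,y \in \X$; the goal is $|f(x)-f(y)| \leq 2C\di(x,y)$, after which taking the supremum yields the claim. The first step is a local Hajłasz-type estimate: applying Lemma \ref{lem:maximal} with $g := |Df|_C$, the uniform bound $g^2 \leq 1$ $\m$-a.e. forces $M_r g^2 \leq 1$ pointwise for every $r > 0$, so for Lebesgue points $x',y'$ of $f$ with $\di(x',y') < R$ one has $|f(x')-f(y')| \leq 2C\di(x',y')$. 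Since Lebesgue points form a $\m$-full set, they are dense in $\X$ under the doubling condition, and continuity of $f$ upgrades the bound to all $x',y' \in \X$ with $\di(x',y') < R$.

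The second step exploits the length-space assumption to remove the restriction $\di(x,y) < R$: given $\varepsilon > 0$, pick $\gamma \in AC([0,1],\X)$ from $x$ to $y$ with $l(\gamma) \leq \di(x,y)+\varepsilon$, and partition $[0,1]$ as $0 = t_0 < t_1 < \cdots < t_N = 1$ so that $\di(\gamma_{t_i}, \gamma_{t_{i+1}}) < R$ for every $i$. Chaining the local estimate yields
\begin{equation*}
|f(x)-f(y)| \leq \sum_{i=0}^{N-1} |f(\gamma_{t_i})-f(\gamma_{t_{i+1}})| \leq 2C \sum_{i=0}^{N-1} \di(\gamma_{t_i}, \gamma_{t_{i+1}}) \leq 2C\, l(\gamma) \leq 2C(\di(x,y)+\varepsilon),
\end{equation*}
and $\varepsilon \to 0$ concludes.

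The main obstacle is the use of $g = |Df|_C$ in Lemma \ref{lem:maximal}, whose statement calls for a genuine upper gradient while $|Df|_C$ is by definition only a generalized one. Under doubling plus Poincaré, however, Shanmugalingam's identification $H_{1,2} = N^{1,2}$ recalled in the discussion after the definition of the Cheeger energy shows that $|Df|_C$ is a weak upper gradient of $f$ (out of a $\text{Mod}_2$-null family of curves), and the proof of Lemma \ref{lem:maximal} --- an iterated application of the Poincaré inequality on dyadic balls --- extends verbatim to this case because the Poincaré inequality is stable under the relaxation defining $|Df|_C$. A technically simpler alternative is to first prove the bound for Lipschitz $f$ (where $\Lip[f] = |Df|_C$ is a bona fide upper gradient and Lemma \ref{lem:maximal} applies directly), and then use the density of $\Lipfunc(\X)$ in $H_{1,2}(\X,\di,\m)$ from Theorem \ref{th:Cheeger} together with a $\m$-a.e. convergent subsequence and the continuity of the limit to transfer the estimate to general $f$.
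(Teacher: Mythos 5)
Your proof is correct and follows essentially the same route as the paper: the lower bound is obtained by testing \eqref{eq:defdiCh} with a truncated distance function (the paper uses $z\mapsto\max\{\di(x,y)-\di(x,z),0\}$, you use $z\mapsto\min\{\di(z,x),n\}$), and the upper bound comes from Lemma \ref{lem:maximal} applied with $g=|Df|_C$ together with the length-space chaining, which the paper invokes in one line and you spell out. The only extra content is your discussion of why Lemma \ref{lem:maximal} may be used with the minimal generalized upper gradient $|Df|_C$ rather than a genuine upper gradient --- a point the paper passes over silently --- and your primary justification (the Poincar\'e inequality, and hence the proof of the lemma, survives the relaxation defining $|Df|_C$, equivalently one may invoke the Newtonian identification) is the right one, whereas the Lipschitz-approximation alternative would require additional care because $L^2$-convergence of the gradients does not give pointwise control of the maximal functions at the two fixed points.
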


\begin{proof}
Fix two points $x,y \in \X$ such that $\di(x,y)<R$ and a function $f\in \mathcal C(\X)$ such that $|Df|_C\leq 1$ $\m$-a.e. on $\X$. Since $f$ is continuous we can apply Lemma \ref{lem:maximal} to the points $x$ and $y$ and conclude that 
\begin{equation*}
    |f(x)-f(y)| \leq C \di(x, y)\left(\left(M_{2 \Lambda \di(x, y)} g^{2}(x)\right)^{1 / 2}+\left(M_{2 \Lambda \di(x, y)} g^{2}(y)\right)^{1 / 2}\right) \leq 2 C \di(x, y).
\end{equation*}
Taking into account the definition of the intrinsic distance $\di_\Ch$, this is sufficient to deduce that $\di_\Ch (x,y) \leq 2 C \cdot \di(x,y)$ for every pair of points such that $\di(x,y)<R$. The length space assumption on $(\X,\di,\m)$ allows to conclude the general inequality $\di_\Ch \leq 2 C \cdot \di$.

On the other hand, for every pair of points $x,y \in \X$, we can consider the function 
\begin{equation*}
    f (z):= \max \{ \di (x,y)- \di(x,z),0\}.
\end{equation*}
Notice that $f$ is $1$-Lipschitz thus $|Df|_C= \Lip[f]\leq 1$ a.e. (thanks to Theorem \ref{th:Cheeger}) and $f(x)-f(y)= \di(x,y)$, in particular this shows that $\di \leq \di_\Ch$.
\end{proof}

\noindent As an interesting corollary of this last result, we can provide an equivalent definition for the intrinsic distance $\di_\Ch$.

\begin{prop}\label{prop:dChLip}Under the assumptions of Proposition \ref{prop:equivCh}, in order to evaluate the distance $\di_\Ch$, we can restrict our attention to Lipschitz functions, that is:
\begin{equation}\label{eq:diChwithLip}
    \di_\Ch(x,y)= \sup \{f(y)-f(x)\,:\, f\in \Lipfunc(\X)\, \text{ and }\, \Lip[f]\leq 1 \,\, \m-a.e.\}.
\end{equation} 
\end{prop}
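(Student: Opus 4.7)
The plan is to show the two suprema in \eqref{eq:defdiCh} and \eqref{eq:diChwithLip} agree by proving both inequalities. Denote by $\di_\Ch^{\mathrm{Lip}}$ the right-hand side of \eqref{eq:diChwithLip}.

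First I would establish $\di_\Ch^{\mathrm{Lip}}\leq\di_\Ch$. Fix $x,y\in\X$ and any $f\in\Lipfunc(\X)$ with $\Lip[f]\leq 1$ $\m$-a.e. Since $f(y)-f(x)$ is a finite number, I can truncate $f$ without losing the contribution at $x$ and $y$: set
\[
\tilde f := \bigl((f-f(x))\wedge M\bigr)\vee(-M) + f(x),
\]
for some $M\geq |f(y)-f(x)|$, so that $\tilde f(x)=f(x)$ and $\tilde f(y)=f(y)$. Truncation and vertical translation are $1$-Lipschitz operations, hence $\Lip[\tilde f]\leq\Lip[f]\leq 1$ $\m$-a.e. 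Since $\tilde f$ is bounded and $\m(\X)<\infty$, we have $\tilde f\in L^2(\X,\m)$; moreover the constant $\Lipglob[\tilde f]$ is a global upper gradient in $L^2$, so $\tilde f\in H_{1,2}(\X,\di,\m)\cap\mathcal{C}(\X)$. By Theorem \ref{th:Cheeger}, $|D\tilde f|_C=\Lip[\tilde f]\leq 1$ $\m$-a.e., which makes $\tilde f$ admissible in \eqref{eq:defdiCh}, yielding $f(y)-f(x)=\tilde f(y)-\tilde f(x)\leq\di_\Ch(x,y)$. Taking the sup over admissible $f$ gives the inequality.

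For the reverse direction $\di_\Ch\leq\di_\Ch^{\mathrm{Lip}}$, let $f\in H_{1,2}(\X,\di,\m)\cap\mathcal{C}(\X)$ with $|Df|_C\leq 1$ $\m$-a.e. The key observation, already implicit in the proof of Proposition \ref{prop:equivCh}, is that such an $f$ is automatically globally Lipschitz. Indeed, applying Lemma \ref{lem:maximal} with $g=|Df|_C$ produces the pointwise estimate $|f(x')-f(y')|\leq 2C\,\di(x',y')$ whenever $\di(x',y')<R$ (continuity of $f$ ensures every point plays the role of a Lebesgue point), and the length space assumption upgrades this to a global Lipschitz bound. Once $f$ is Lipschitz, Theorem \ref{th:Cheeger} gives $\Lip[f](x)=|Df|_C(x)\leq 1$ for $\m$-a.e. $x\in\X$, so $f$ is admissible in $\di_\Ch^{\mathrm{Lip}}$, and $f(y)-f(x)\leq\di_\Ch^{\mathrm{Lip}}(x,y)$.

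The one subtle point—and the main obstacle in a completely rigorous write-up—is the application of Lemma \ref{lem:maximal} with the minimal \emph{generalized} upper gradient $|Df|_C$ in place of a genuine upper gradient. This is handled by a standard limiting argument: take an approximating sequence $(f_i,g_i)$ with $f_i\to f$ and $g_i\to |Df|_C$ in $L^2(\X,\m)$ and each $g_i$ a genuine upper gradient of $f_i$, apply Lemma \ref{lem:maximal} to each $(f_i,g_i)$ at common Lebesgue points, and pass to the limit, using that maximal functions are $L^2$-continuous and that $f$ is continuous. This is precisely the (implicit) step already invoked in the proof of Proposition \ref{prop:equivCh}, so once that step is granted the present statement follows immediately.
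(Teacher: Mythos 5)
Your proposal is correct, but the key direction is argued differently from the paper. For $\di_\Ch\leq\di_\Ch^{\mathrm{Lip}}$ you re-run the Haj\l asz-type estimate of Lemma \ref{lem:maximal} directly on an admissible $f$ with $g=|Df|_C$, which forces you to confront (and defer, via a limiting argument) the fact that $|Df|_C$ is only a \emph{generalized} upper gradient. The paper's proof avoids reopening Lemma \ref{lem:maximal} altogether: it observes that any $f\in H_{1,2}\cap\mathcal C(\X)$ with $|Df|_C\le 1$ $\m$-a.e.\ is, \emph{by the very definition of $\di_\Ch$ as a supremum}, $1$-Lipschitz with respect to $\di_\Ch$, and then the already-proved inequality $\di_\Ch\le 2C\,\di$ of Proposition \ref{prop:equivCh} immediately makes $f$ ($2C$-)Lipschitz with respect to $\di$; Theorem \ref{th:Cheeger} then converts $|Df|_C\le1$ into $\Lip[f]\le1$. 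So you could shorten your second step to two lines and entirely bypass the genuine-versus-generalized upper gradient subtlety, since only the \emph{statement} of Proposition \ref{prop:equivCh} is needed, not its proof technique (the subtlety you flag does sit inside the paper's proof of Proposition \ref{prop:equivCh}, but that result is taken as given here). On the other hand, your treatment of the easy inequality $\di_\Ch^{\mathrm{Lip}}\le\di_\Ch$ is more careful than the paper's: the truncation $\tilde f=((f-f(x))\wedge M)\vee(-M)+f(x)$ neatly handles the possible failure of an unbounded Lipschitz function to lie in $L^2(\X,\m)$ (the paper leaves this membership implicit), and your verification that truncation does not increase $\Lip[\cdot]$ and that the constant $\Lipglob[\tilde f]$ is an $L^2$ upper gradient is sound. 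In short: both directions of your argument work; the reverse direction duplicates, in a more laborious but essentially equivalent way, the content the paper extracts more cheaply from Proposition \ref{prop:equivCh}.
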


\begin{proof}
It is sufficient to prove that every $f\in H_{1,2}(\X,\di,\m) \cap\mathcal{C}(\X)$ such that $|Df|_C\leq 1$ $\m$-a.e. is a Lipschitz function, in fact in this case the condition $|Df|_C\leq 1$ becomes $\Lip[f]\leq 1$. It is obvious from the definition of $\di_\Ch$ that every function $f\in H_{1,2}(\X,\di,\m) \cap\mathcal{C}(\X)$ such that $|Df|_C\leq 1$ $\m$-a.e. is $1$-Lipschitz with respect to the distance $\di_\Ch$. On the other hand $\di$ and $\di_\Ch$ are equivalent distances according to Proposition \ref{prop:equivCh}, this allows to conclude the proof.
\end{proof}

\begin{remark}
In \cite{MR2231926} Ohta was actually able to prove that $\di= \di_\Ch$ whenever an additional rigidity condition, called \textit{strong doubling condition}, holds. In this paper we are not interested in adding this assumption to gain the equality of this two distances, because in the next section we will only prove that $\di_\Ch \simeq \di_\Ks$ and our main result (that is $\di \simeq \di_\Ks$) would not be improved.
\end{remark}

\section{Convergence of the Korevaar-Schoen Potentials}\label{sec:convpot}

This section is dedicated to prove the convergence of the Korevaar-Schoen potentials and to show some nice consequence of it. In particular, the locally Minkowski assumption on the reference metric measure space ensures the pointwise convergence of the potentials $\ks_r[f]$ as $r\to 0$, for every Lipschitz function $f\in \Lipfunc(\X)$ (Proposition \ref{prop:convofpotential}). As a consequence, we will get the convergence of the energies $\Ks_r$, proving also that $\tilde\Ks$ is a quadratic form on $L^2(\X,\m)$. Proposition \ref{prop:convofpotential} allows also to prove the equivalence of the energies $\Ks$ and $\Ch$ at the level of potentials, in view of the estimates provided by the following proposition. In the last part of the section we will prove a proper version of the Rademacher theorem (Proposition \ref{prop:differential}), finding, for $\m$-almost every regular point $x \in \mathcal R(\X)$, a linear function on $\R^{n(x)}$ representing the differential. 

\begin{prop}\label{prop:potestimate}
In a metric measure space $(\X,\di,\m)$ satisfying the doubling condition and supporting a Poincar\'e inequality, the following estimates for the Korevaar-Schoen potentials of $f\in\Lipfunc(\X)$ hold:
\begin{enumerate}
    \item $\limsup_{r\to 0}\ks^2_r[f](x)\leq \Lip[f](x)^2$ for every $x\in\X$,
    \item  $\liminf_{r \to 0}\ks^2_r[f](x)\geq \tilde C \Lip[f](x)^2$ for $\m$-a.e. $x\in\X$, for a suitable constant $\tilde C$ depending only on $C_D$.
\end{enumerate}
 
\end{prop}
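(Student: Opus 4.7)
Part (1) follows directly from the $\limsup$-definition of the pointwise Lipschitz constant. Given $\varepsilon>0$, choose $r_\varepsilon>0$ so that $|f(y)-f(x)|\le (\Lip[f](x)+\varepsilon)\,\di(y,x)\le (\Lip[f](x)+\varepsilon)\, r$ whenever $y\in B_r(x)$ and $0<r<r_\varepsilon$. Squaring and averaging over $B_r(x)$ gives $\ks^2_r[f](x)\le (\Lip[f](x)+\varepsilon)^2$; sending $r\to 0$ and then $\varepsilon\to 0$ yields the claim. This is valid at every $x\in\X$, with no almost-everywhere reduction.

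For part (2), the strategy is to transport the ``sphere-maximiser'' provided by Proposition \ref{prop:pointlip} to a ball of positive measure on which $|f-f(x)|$ is still comparable to $\Lip[f](x)\,r$. Set $\ell:=\Lip[f](x)$ and assume $\ell>0$ (the case $\ell=0$ is trivial). I work at a point where the conclusion of Proposition \ref{prop:pointlip} holds and where, additionally, the asymptotic Lipschitz constant of $f$ equals $\ell$; this latter property holds $\m$-almost everywhere in doubling spaces supporting a Poincar\'e inequality, as a consequence of Theorem \ref{th:Cheeger} together with the identification of the minimal relaxed slope as the $L^2$-relaxation of the asymptotic Lipschitz constant (cf.\ \cite{MR3152751, MR1708448}). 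Hence, for every $\varepsilon\in(0,1/4)$ there exists $r_0>0$ such that (a) for every $r\in(0,r_0)$ one can pick $y_r$ with $\di(y_r,x)=r$ and $|f(y_r)-f(x)|\ge (1-\varepsilon)\ell r$, and (b) $|f(y)-f(z)|\le(\ell+\varepsilon)\,\di(y,z)$ for all $y,z\in B_{3r_0}(x)$.

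For $r<r_0/2$ and any $z\in B_{r/2}(y_r)\subseteq B_{3r/2}(x)\subseteq B_{3r_0}(x)$, the reverse triangle inequality combined with (a)--(b) gives
\begin{equation*}
|f(z)-f(x)|\ge|f(y_r)-f(x)|-|f(z)-f(y_r)|\ge(1-\varepsilon)\ell r-(\ell+\varepsilon)\tfrac{r}{2}\ge\bigl(\tfrac{\ell}{2}-2\varepsilon\ell\bigr)r.
\end{equation*}
The inclusion $B_{3r/2}(x)\subseteq B_{5r/2}(y_r)$, together with $2^3>5$, gives by three iterations of the doubling condition $\m(B_{3r/2}(x))\le C_D^3\,\m(B_{r/2}(y_r))$. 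Restricting the integral in the definition of $\ks^2_{3r/2}[f](x)$ to $B_{r/2}(y_r)$ and combining the previous two estimates,
\begin{equation*}
\ks^2_{3r/2}[f](x)\ge\frac{\m(B_{r/2}(y_r))}{\m(B_{3r/2}(x))}\cdot\frac{(\ell/2-2\varepsilon\ell)^2\,r^2}{(3r/2)^2}\ge\frac{(1-4\varepsilon)^2}{9\,C_D^3}\,\ell^2.
\end{equation*}
Writing $r':=3r/2$, so that $r'\to 0$ as $r\to 0$, and letting $\varepsilon\to 0$ afterwards, produces the desired bound with $\tilde C=1/(9\,C_D^3)$.

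The crux, and the reason $\tilde C$ can depend only on $C_D$, is the $\m$-a.e.\ identification of the asymptotic and pointwise Lipschitz constants. Without it one could only estimate $|f(z)-f(y_r)|$ via $\Lipglob[f]\,\di(z,y_r)$; the auxiliary scale would then have to shrink with $\ell/\Lipglob[f]$ and the doubling exponent needed to compare $\m(B_{\delta r}(y_r))$ with $\m(B_{3r/2}(x))$ would grow accordingly, producing a constant depending on the function and not just on $C_D$.
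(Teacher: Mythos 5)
Part (1) is fine and is exactly the paper's argument. The problem is in part (2), where your proof rests on the claim that for $\m$-a.e.\ $x$ the \emph{asymptotic} Lipschitz constant $\lim_{r\to 0}\Lipglob\big[f|_{B_r(x)}\big]$ coincides with the pointwise constant $\Lip[f](x)$, deduced from Theorem \ref{th:Cheeger} plus the identification of the minimal relaxed slope with the relaxation of the asymptotic Lipschitz constant. That deduction only gives the trivial inequality: Theorem \ref{th:Cheeger} and the results of \cite{MR3152751} yield $|Df|_C=|Df|_*=\Lip[f]$ $\m$-a.e.\ and $|Df|_*\le \lim_{r\to 0}\Lipglob\big[f|_{B_r(x)}\big]$, i.e.\ $\Lip[f]\le$ asymptotic constant, never the reverse. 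And the reverse is simply false, already on $(\R,|\cdot|,\Leb^1)$: take $f$ Lipschitz with $f'=\mathbf{1}_E$ for a measurable set $E$ such that both $E$ and $E^c$ meet every interval in positive measure; then $\Lip[f]=0$ a.e.\ on $E^c$ (a set of positive measure) while the Lipschitz constant of $f$ on every ball is $1$, so the asymptotic constant is identically $1$. Consequently your step (b), namely $|f(y)-f(z)|\le(\ell+\varepsilon)\di(y,z)$ for all $y,z\in B_{3r_0}(x)$ with $\ell=\Lip[f](x)$, is unjustified and fails in general, and with it the key estimate $|f(z)-f(y_r)|\le(\ell+\varepsilon)\di(z,y_r)$ on the ball $B_{r/2}(y_r)$ around the sphere maximiser. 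Your own closing remark shows you sensed the issue: without (b) the only available two-point bound near $y_r$ is via $\Lipglob[f]$, which destroys the dependence of $\tilde C$ on $C_D$ alone.

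The paper closes exactly this hole without any two-point Lipschitz control around $x$: by the Lebesgue differentiation theorem applied to $\Lip[f]$ and an Egorov-type uniformisation, it produces a set $Z_1$ of almost full measure on which the one-point slope bound $|f(z_1)-f(z)|\le(\Lip[f](z_1)+\varepsilon)\di(z_1,z)$ holds at a uniform scale $r_1$, together with the sets $E_\varepsilon(\bar z)=\{|\Lip[f]-\Lip[f](\bar z)|<\varepsilon\}$ having $\bar z$ as a density point. Near the sphere maximiser $z_r$ it then selects an auxiliary point $z'_r\in B_{r/8}(z_r)\cap Z_1\cap E_\varepsilon(\bar z)$ — possible because $B_{r/8}(z_r)$ occupies a definite $C_D$-fraction of $B_r(\bar z)$ while the bad set has vanishing density — and bounds $|f(z)-f(z'_r)|$ using membership of $z'_r$ in $Z_1\cap E_\varepsilon(\bar z)$, not any Lipschitz bound tied to $x$. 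If you want to keep your ``transport the sphere maximiser to a ball'' scheme, you must insert this density-point/auxiliary-point step (or an equivalent measure-theoretic selection); as written, the proof of part (2) has a genuine gap.
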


\begin{proof}
1. This inequality can be easily deduced from the definition of the Korevaar-Schoen potentials, in fact:
\begin{equation}\label{eq:ksestimateup}
    \fint_{B_{r}(x)} \frac{|f(y)- f(x)|^2}{r^{2}} \de \m(y) \leq \left[\sup _{y\in B_r(x)} \frac{|f(y)-f(x)|}{r} \right]^2.
\end{equation}
Taking the $\limsup$ at both sides, we can conclude, keeping in mind \eqref{eq:pointlip}.

2. First of all we fix $f\in\Lipfunc(\X)$ and notice that $\Lip[f]$ is a measurable function, therefore it is possible to apply the Lebesgue differentiation theorem and find a Borel set $Z\subseteq X$ such that $\mu(X\setminus Z)=0$ and for every $\bar z\in Z$ it holds 
\begin{equation*}
   \lim_{s\to 0} \fint_{B_s(\bar z)} |\Lip[f](z)-\Lip[f](\bar z)|=0.
\end{equation*}
Then for every $\bar z\in Z$ and every $\delta >0$ the set 
\begin{equation*}
    E_\delta(\bar z):=\{x \in X \,:\, |\Lip[f](x)-\Lip[f](\bar z)|<\delta\}
\end{equation*}
has $\bar z$ as a density point, in fact
\begin{equation*}
    \frac{\m(B_s(\bar z)\setminus E_\delta(\bar z))}{\m(B_s(\bar z))}  \leq \frac 1 \delta\fint_{B_s(\bar z)} |\Lip[f](z)-\Lip[f](\bar z)| \to 0 \quad \text{as }s\to 0.
\end{equation*}
We are now going to prove that $\ks[f](x)\geq \tilde C \Lip[f](x)$ $\mu$-almost everywhere, for a suitable constant $\tilde C$ that will be explicitly determined during the proof. In particular we are going to show that, for every $\delta>0$ there exist a set $Z_1\subset Z$ such that $\m(Z_1 \setminus Z)<\delta$ and $\ks[f](x)\geq \tilde C \Lip[f](x)$ holds in $Z_1$. Once fixed an $\varepsilon>0$, we can find $Z_1\subset Z$ and $r_1>0$ such that $\mu(Z\setminus Z_1)<\delta$ and for every $z_1\in Z_1$ it holds 
\begin{equation}\label{eq:defZ1}
    \frac{|f(z_1)-f(z)|}{\di (z_1,z)} < (\Lip f(z_1)+ \varepsilon) \qquad \forall z \in \X \text{ such that }\di(z_1,z)\leq r_1.
\end{equation}
Let $\bar z\in Z_1$ be a density point of the set $Z_1$. Then, up to an $\m$-negligible restriction of $Z_1$, for every $0<r<r_1\leq R$ small enough, there exists $z_r\in \X$ such that $\di(z_r,\bar z)=\frac 34 r$ and
\begin{equation}\label{eq:lipzr}
    |f(z_r)-f(\bar z)| \geq \frac 34 r \cdot (\Lip(\bar z)-\varepsilon),
\end{equation}
this is a quite obvious consequence of Proposition \ref{prop:pointlip}.
Then for $r$ small enough we can find $z'_r\in B_{\frac{1}{8}r}(z_r)\cap Z_1 \cap E_\varepsilon(\bar z)$, in fact for every $0<r<r_1$ it holds that $B_r(\bar z) \subset B_{2r}(z_r)$ and then
\begin{equation*}
    \m \big(B_{\frac{1}{8}r}(z_r)\big) \geq \frac{1}{C_D(R)^5} \m(B_{2r}(z_r)) \geq \frac{1}{C_D(R)^5} \m (B_r(\bar z)).
\end{equation*}
In particular we have that definitely 
\begin{equation*}
    \frac{\m \big(B_{\frac{1}{8}r}(z_r)\big)}{\m (B_r(\bar z))} \geq \frac{1}{C_D(R)^5} > 0,
\end{equation*}
while, on the other hand, $Z_1 \cap E_\varepsilon(\bar z)$ has a density point in $\bar z$.
Then, according to \eqref{eq:defZ1}, we can deduce that
\begin{equation*}
    |f(x)-f(z'_r)|\leq (\Lip[f](\bar z) + 2\varepsilon) \di(x,z'_r)
\end{equation*}
for every $x\in B_{r_1}(z'_r)$. Consequently, combining this last inequality with \eqref{eq:lipzr}, we can deduce that for every $z\in B_{\frac 18 r}(z_r)$
\begin{align*}
    |f(z)-f(\bar z)| &\geq |f(z_r)-f(\bar z)| - |f(z_r)-f(z'_r)| - |f(z'_r)-f(z)|\\
    & \geq \frac 34 r \cdot (\Lip[f](\bar z)-\varepsilon) - \frac 38 r \cdot (\Lip[f](\bar z) + 2\varepsilon) = \frac 38 r \cdot \Lip[f](\bar z) - \frac 32 \varepsilon r.
\end{align*}
Putting everything together we obtain 
\begin{align*}
    \fint_{B_r(\bar x)} \frac{|f(z)-f(\bar z)|^2}{r^2} \de \m  (z)&=\frac{1}{\m(B_r(\bar x))}\int_{B_r(\bar x)} \frac{|f(z)-f(\bar z)|^2}{r^2} \de \m (z)\\
    &\geq \frac{1}{C_D(R)^5}\cdot \frac{1}{\m(B_{\frac 18 r}(\bar x))}\int_{B_{\frac18 r}(\bar x)} \frac{|f(z)-f(\bar z)|^2}{r^2} \de \m (z)\\
    & \geq \frac{1}{C_D(R)^5} \left[  \frac 38 \Lip[f](\bar z) - \frac 32 \varepsilon  \right]^2.
\end{align*}
Since we can perform this estimate for every $r$ small enough we can conclude that for every $\bar z \in Z_1$
\begin{equation*}
    \liminf_{r\to 0} \ks^2_r[f](\bar z)\geq \frac{1}{C_D(R)^5} \left[  \frac 38 \Lip[f](\bar z) - \frac 32 \varepsilon  \right]^2.
\end{equation*}
This inequality is true for every $\varepsilon>0$, then we can deduce that 
\begin{equation*}
    \ks^2_r[f](\bar z) \geq \frac{1}{C_D(R)^5} \left[  \frac 38\right]^2 \Lip[f](\bar z)^2,
\end{equation*}
this concludes the proof.
\end{proof}

\begin{remark}\label{rmk:unifboundks}
Notice that, the inequality \eqref{eq:ksestimateup} also allows to conclude that $\ks_r[f](x)\leq \Lipglob[f]$ for every $x\in \X$ and every $r>0$, this global and uniform bound will be useful in the following.
\end{remark}

\noindent We point out that for Proposition \ref{prop:potestimate} (and for Remark \ref{rmk:unifboundks}) the locally Minkowski assumption is not needed, but from now on this hypothesis will play a fundamental role. 

Moreover, in the following we will always work with metric measure spaces which satisfy the doubling condition and support a Poincaré inequality.\\

We can now prove the convergence of the Korevaar-Schoen potentials.

\begin{prop}\label{prop:convofpotential}
Let $(\X,\di,\m)$ be a locally Minkowski space and let $f:\X\to \R$ be a Lipschitz function. Then for $\m$-almost every $x\in \X$ the limit
\begin{equation}\label{eq:limitks}
    \ks[f](x):=\lim_{r\to 0^+} \ks_r[f](x)
\end{equation}
exists and it is finite.
\end{prop}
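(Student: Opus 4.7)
The plan is to exploit the locally Minkowski structure in order to transfer the problem to the normed space $(\R^{n(x)}, \norm{\cdot})$, where convergence of $\ks_r^2[f](x)$ becomes a blow-up statement for Lipschitz functions. Fix a regular point $x \in \mathcal{R}(\X)$ and an arbitrarily small $\varepsilon > 0$; for $r < r(\varepsilon)$ the map $i^{x,r}$ is $\varepsilon$-bi-Lipschitz from $(B_r(x), \di_r)$ into $\R^{n(x)}$, its image contains $B_{1-\varepsilon}(0)$ and is contained in $B_{1+\varepsilon}(0)$ (from the bi-Lipschitz bound together with $i^{x,r}(x)=0$), and $(i^{x,r})_\# \m^{x,r}$ is sandwiched between $(1\pm\varepsilon) c(x) \Leb^{n(x)}$ on that image.

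The first step is to perform a change of variables in the definition of $\ks_r^2[f](x)$, rewriting it as
\begin{equation*}
\ks_r^2[f](x) = \int_{i^{x,r}(B_r(x))} |g^{x,r}(z)|^2 \, \dee (i^{x,r})_\# \m^{x,r}(z), \qquad g^{x,r}(z) := \frac{f\big((i^{x,r})^{-1}(z)\big) - f(x)}{r}.
\end{equation*}
A direct computation exploiting the $\varepsilon$-bi-Lipschitz property and the global Lipschitz bound shows that $g^{x,r}(0) = 0$ and that $g^{x,r}$ is Lipschitz with constant at most $\Lipglob[f]/(1-\varepsilon)$ on its domain, hence uniformly bounded on $B_{1+\varepsilon}(0)$.

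The second step is to invoke the Rademacher-type differentiability result for locally Minkowski spaces (Proposition \ref{prop:differential}), which produces for $\m$-almost every regular point $x$ a linear function $L_x : \R^{n(x)} \to \R$ such that $g^{x,r} \to L_x$ uniformly on compact subsets of $B_1(0)$ as $r \to 0$. Combining this uniform convergence with the measure comparability in Definition \ref{def:locMink}(3), and observing that the symmetric difference between $i^{x,r}(B_r(x))$ and $B_1(0)$ has Lebesgue measure $O(\varepsilon)$, a dominated convergence argument yields
\begin{equation*}
\lim_{r \to 0} \ks_r^2[f](x) = c(x) \int_{B_1(0)} |L_x(z)|^2 \, \dee \Leb^{n(x)}(z),
\end{equation*}
which is finite and in fact bounded above by $\Lipglob[f]^2$ since $c(x) \Leb^{n(x)}(B_1(0)) = 1$.

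The main obstacle is controlling simultaneously the three sources of error — the bi-Lipschitz distortion of the charts $i^{x,r}$, the deviation of the pushforward measure from $c(x)\Leb^{n(x)}$, and the variation of the integration domain $i^{x,r}(B_r(x))$ with $r$ — so that they all shrink in a compatible way as $r \to 0$, and verifying that the dominated convergence is robust enough to absorb all three. A secondary subtlety is the appeal to Proposition \ref{prop:differential}, a Rademacher-type theorem whose proof only comes later in the paper; should one wish to avoid a forward reference, an alternative route is to extract subsequential uniform limits of $g^{x,r}$ via Arzelà--Ascoli and then argue, using Cheeger's differentiation theorem in the doubling PI setting, that any such limit must be linear and agree with the Cheeger differential of $f$ expressed in the chart, which forces uniqueness of the blow-up and therefore convergence of the full family.
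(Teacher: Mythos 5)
There is a genuine gap at the heart of your argument. You claim that Proposition \ref{prop:differential} gives a linear function $L_x$ with $g^{x,r}\to L_x$ uniformly on compacts \emph{as $r\to 0$}, i.e.\ for the full family of scales and for the charts $i^{x,r}$ coming from Definition \ref{def:locMink}. That proposition only provides a \emph{sequence} of radii $r_m\to 0$, together with specially chosen rescaled charts $i_m$, along which the blow-ups converge; it says nothing about the full family. Worse, full-family convergence of the chart representatives is false in general: the charts $i^{x,r}$ at different scales are not required to be compatible with one another, so the rescalings can converge to different linear functions along different subsequences (this is exactly the phenomenon of Remark \ref{rmk:nonunique}, where the blow-up can be $\scal{v}{\cdot}$ along some scales and $-\scal{v}{\cdot}$ along others). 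Consequently your dominated-convergence step only yields convergence of $\ks_{r_m}[f](x)$ along a subsequence, which is not the statement to be proved; and since two subsequential blow-up limits need not a priori have the same value of $c(x)\int_{B_1(0)}|L|^2\de\Leb^{n(x)}$, you cannot conclude existence of $\lim_{r\to 0^+}\ks_r[f](x)$ without a further argument. Your fallback suggestion (uniqueness of the blow-up via Cheeger's differentiation theorem, ``which forces convergence of the full family'') runs into the same obstruction: uniqueness of the differential fails in this setting precisely because of the non-consistency of the charts, as the paper emphasizes. There is also a structural problem of logical order: in the paper, Corollary \ref{cor:kswithdiff} identifies the value of $\ks[f](x)$ along the sequence $r_m$ \emph{after} Proposition \ref{prop:convofpotential} has guaranteed that the full limit exists; your proposal inverts this and thereby loses exactly the piece of information the proposition is supposed to supply.

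The paper's proof avoids all of this by never asking the blow-ups to converge. For a fixed accuracy $\varepsilon_m=1/m$ it fixes \emph{one} chart $i_m$ at a fixed scale around $x$, applies the classical Euclidean Rademacher theorem to $f\circ (i_m)^{-1}$ (Lipschitz for the equivalent Euclidean norm), and transfers the $\Leb^{n(x)}$-a.e.\ differentiability back through the measure comparison \eqref{eq:pfmea} to get, for $\m$-a.e.\ $x$, differentiability of $g=f\circ(i_m)^{-1}$ at the single point $z=i_m(x)$. Within that one chart, the first-order Taylor expansion controls $\ks_r^2[f](x)$ for \emph{every} sufficiently small $r$: both an upper and a lower bound in terms of the fixed quantity $\fint_{B_1(z)}|\scal{\nabla g(z)}{z'-z}|^2\de\Leb^{n(x)}$, up to multiplicative and additive errors $O(\varepsilon_m)$. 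This gives the oscillation estimate $|\ks_r[f](x)-\ks_{r'}[f](x)|\le O(\varepsilon_m)$ for all $r,r'$ small, i.e.\ a Cauchy-type statement, and letting $m\to\infty$ yields existence of the limit with no need to identify it and no appeal to any uniqueness of blow-ups. If you want to salvage your route, you would have to prove separately that all subsequential blow-up limits have the same $L^2(B_1(0))$-energy; the fixed-chart Cauchy argument is the cleaner way around that.
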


\begin{proof}
Once fixed $m\in \N$, for every $x\in \mathcal{R}(f)$ we can find $r(x)$ such that $i^{x,r(x)}$ satisfies 2 and 3 in Definition \ref{def:locMink} with $\varepsilon = \varepsilon_m:= \frac 1m$. Then $\{B_{r(x)}(x)\}_{x\in \mathcal{R}(f)}$ is an open cover of $ \mathcal{R}(f)$ and since $(\X,\di)$ is hereditarily Lindel\"of (because it is separable) there exists a countable subcover. Now consider an element of the subcover $B_{r(x)}(x)$ and notice that, since $i^{x,r(x)}$ is bi-Lipschitz, it is invertible on its image and the map
\begin{equation*}
    f \circ (i^{x,r(x)})^{-1}: (\R^{n(x)},\norm{\cdot}) \supset i^{x,r(x)}(B_{r(x)}(x)) \to \R
\end{equation*}
is Lipschitz. The standard Euclidean norm $\norm{\cdot}_{eu}$ is equivalent to $\norm{\cdot}$ and thus $f \circ (i^{x,r(x)})^{-1}$ is Lipschitz also with respect to $\norm{\cdot}_{eu}$. In particular we can apply the Rademacher theorem and conclude that $f \circ (i^{x,r(x)})^{-1}$ is differentiable $\Leb^{n(x)}$-almost everywhere on $i^{x,r(x)}(B_{r(x)}(x))$. Taking into account property 3 in Definition \ref{def:locMink}, we can actually conclude that for $\m$-almost every point $y$ in $B_{r(x)}(x)$ the function $i^{x,r(x)}$ is differentiable in $i^{x,r(x)}(y)$. Putting together what we have up to now, we can deduce that there exists a $\m$-full measure set $A$ such that for every $x\in A$ and $m\in \N$ we can find an $\varepsilon_m$-bi-Lipschitz map 
\begin{equation*}
    i_m: A_m \to (\R^{n_m}, \norm{\cdot})
\end{equation*}
where $A_m$ is an open neighborhood of $x$ and $(\R^{n_m}, \norm{\cdot})$ is a suitable normed spaces (with $n_m \leq N$), such that $f \circ (i_m)^{-1}$ is differentiable in $i_m(x)$. 
We are going to prove that the limit \eqref{eq:limitks} exists for every $x\in A$. To this end, we fix $m \in \N$ and call $z=i_m(x)$ and $g=f \circ (i_m)^{-1}$. Now $g$ is differentiable in $z$, with differential $\nabla g(z)$, that is
\begin{equation*}
    g(z')-g(z) = \scal{\nabla g (z)}{z'-z} + o (\norm{z'-z}).
\end{equation*}
In particular, we can find $\bar r >0$ such that $B_{\bar r}(x)$ is inside the domain of $i_m$ and 
\begin{equation*}
    | g(z')-g(z) - \scal{\nabla g (z)}{z'-z}| \leq \varepsilon_m  (\norm{z'-z}),
\end{equation*}
whenever $z'\in i_m(B_{\bar r}(x))$. We can then take any $r<\bar r$ and perform the following computation
\begin{equation}\label{eq:estimateinchart}
    \begin{split}
        \ks_r^2[f](x) &= \fint_{B_r(x)} \frac{|f(x')-f(x)|^2}{r^2} \de \m(x')\\
        &= \frac{1}{\m(B_r(x))}\int_{B_r(x)} \frac{|f(x')-f(x)|^2}{r^2} \de \m(x') \\
    &=\frac{1}{\m(B_r(x))} \int_{i_m(B_r(x))} \frac{|g(z')-g(z)|^2}{r^2} \de [(i_m)_\# \m] (z') .
    \end{split}
\end{equation}
On the other hand, since $i_m$ is $\varepsilon_m$-bi-Lipschitz, it holds that
\begin{equation*}
    B_{(1-\varepsilon_m) r} (z) \subseteq i_m (B_r(x)) \subseteq  B_{(1+\varepsilon_m) r} (z)
\end{equation*}
and consequently that, for some a constant $c$ depending on $i_m$,
\begin{align*}
    (1-\varepsilon_m)c \cdot \Leb^{n_m} (B_{(1-\varepsilon_m) r} (z)) &\leq (i_m)_\# \m (B_{(1-\varepsilon_m) r} (z)) \leq \m(B_r(x)) \\
    &\leq (i_m)_\# \m (B_{(1+\varepsilon_m) r} (z)) \leq (1+\varepsilon_m)c \cdot \Leb^{n_m} (B_{(1+\varepsilon_m) r} (z)).
\end{align*}
As a result, we can deduce that 
\begin{equation*}
    c \cdot \Leb^{n_m} (B_{(1-\varepsilon_m) r} (z)) = \bigg[\frac{1-\varepsilon_m}{1+\varepsilon_m}\bigg]^{n_m} c \cdot \Leb^{n_m} (B_{(1+\varepsilon_m) r} (z)) \geq \frac{(1-\varepsilon_m)^{n_m}}{(1+\varepsilon_m)^{n_m+1}} \m(B_r(x))
\end{equation*}
Then, taking into account \eqref{eq:estimateinchart}, we can conclude that 
\begin{align*}
    \ks_r^2[f](x) &\geq \frac{1}{\m(B_r(x))} (1-\varepsilon_m) \int_{B_{(1-\varepsilon_m) r}(z)} \frac{|g(z')-g(z)|^2}{r^2}  \de [c\cdot \Leb^{n_m}] (z') \\
    &\geq \bigg[\frac{1-\varepsilon_m}{1+\varepsilon_m}\bigg]^{N+1} \fint_{B_{(1-\varepsilon_m) r}(z)} \frac{|g(z')-g(z)|^2}{r^2}  \de \Leb^{n_m} (z') \\
    &\geq \bigg[\frac{1-\varepsilon_m}{1+\varepsilon_m}\bigg]^{N+1} \left[ \fint_{B_{(1-\varepsilon_m) r}(z)} \frac{|\scal{\nabla g(z)}{z'-z}|^2}{r^2} \de  \Leb^{n_m} (z') - \varepsilon_m\right]\\
    &= \bigg[\frac{1-\varepsilon_m}{1+\varepsilon_m}\bigg]^{N+1} \left[ (1-\varepsilon_m)^2\fint_{B_{1}(z)} |\scal{\nabla g}{z'-z}|^2 \de  \Leb^{n_m} (z') - \varepsilon_m\right].
\end{align*}
With an analogous argument we can deduce that 
\begin{equation*}
    \ks_r^2[f](x) \leq \bigg[\frac{1+\varepsilon_m}{1-\varepsilon_m}\bigg]^{N+1} \left[ (1+\varepsilon_m)^2\fint_{B_{1}(z)} \frac{|\scal{\nabla g}{z'-z}|^2}{r^2} \de [c\cdot \Leb^{n_m}] (z') + \varepsilon_m\right].
\end{equation*}
On the other hand, it is easy to notice that, since $\ks_r^2[f]$ is uniformly bounded by $\Lipglob[f]$ (see Remark \ref{rmk:unifboundks}), the quantity 
\begin{equation*}
    \fint_{B_{1}(z)} \frac{|\scal{\nabla g}{z'-z}|^2}{r^2} \de [c\cdot \Leb^{n_m}] (z')
\end{equation*}
can be itself bounded uniformly. This observation, combined with the estimates, allows to prove that
\begin{equation*}
   | \ks_r[f](x)- \ks_{r'}[f](x)| \leq O(\varepsilon_m)
\end{equation*}
for every $r,r'< \bar r$. This is sufficient to conclude the proof.
\end{proof}

\begin{corollary}\label{cor:convKS}
For every Lipschitz function $f:\X\to \R$ in locally Minkowski space $(\X,\di,\m)$ it holds that
\begin{equation}\label{eq:convKS}
    \Ks[f]= \int \ks^2[f] \de \m = \lim_{r\to 0} \int \ks^2_r[f]  \de \m = \lim_{r \to 0} \Ks_r[f]
\end{equation} and $\Ks$ is a quadratic form on $\Lipfunc(\X)$.
Moreover we have that 
\begin{equation*}
    \tilde  C  \Lip[f](x)^2  \leq \ks^2[f](x)\leq \Lip[f](x)^2  \qquad \text{for $\m$-almost every }x\in \X,
\end{equation*}
where $\tilde C$ is the constant appearing in Proposition \ref{prop:potestimate}, and consequently $\tilde C\Ch[f]\leq \Ks[f] \leq \Ch[f]$.
\end{corollary}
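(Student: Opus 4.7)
My plan is to assemble the corollary from three previously established ingredients: the pointwise convergence of $\ks_r[f](x)$ to a limit $\ks[f](x)$ for $\m$-a.e.\ $x$ (Proposition \ref{prop:convofpotential}), the two-sided pointwise estimates $\limsup_r \ks_r^2[f] \le \Lip[f]^2$ everywhere and $\liminf_r \ks_r^2[f] \ge \tilde C \Lip[f]^2$ $\m$-a.e.\ (Proposition \ref{prop:potestimate}), and the uniform bound $\ks_r[f](x) \le \Lipglob[f]$ from Remark \ref{rmk:unifboundks}.

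First, since the pointwise limit $\ks[f](x) = \lim_{r\to 0}\ks_r[f](x)$ exists for $\m$-a.e.\ $x$, the $\limsup$ and $\liminf$ coincide with the limit, so Proposition \ref{prop:potestimate} immediately yields the sandwich
\[
\tilde C\,\Lip[f](x)^2 \le \ks^2[f](x) \le \Lip[f](x)^2 \quad \text{for $\m$-a.e. } x\in\X.
\]
Next, to upgrade pointwise convergence to convergence of the integrals, I would apply dominated convergence: by Remark \ref{rmk:unifboundks} we have $\ks_r^2[f](x) \le \Lipglob[f]^2$ uniformly in $r$ and $x$, and since $\m(\X)<\infty$ the constant $\Lipglob[f]^2$ is an integrable dominator. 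Hence
\[
\lim_{r\to 0}\Ks_r[f] = \lim_{r\to 0}\int \ks_r^2[f]\,\de\m = \int \ks^2[f]\,\de\m,
\]
and the left-hand limit equals $\Ks[f]$ by the definition $\Ks[f]:=\liminf_{r\to 0}\Ks_r[f]$ (the $\liminf$ becomes a genuine limit here). This establishes the chain of equalities \eqref{eq:convKS}.

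For the quadratic form property, I fix $f,g\in\Lipfunc(\X)$ and recall from Proposition \ref{prop:KSrquad} that $\Ks_r[f+g]+\Ks_r[f-g] = 2\Ks_r[f] + 2\Ks_r[g]$ for every $r>0$. Since $f+g$ and $f-g$ are also Lipschitz, each of the four terms converges as $r\to 0$ by the argument above, so passing to the limit preserves the parallelogram identity; hence $\Ks$ is quadratic on $\Lipfunc(\X)$.

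Finally, integrating the pointwise sandwich and invoking Theorem \ref{th:Cheeger} to rewrite $\Ch[f]=\int |Df|_C^2\de\m=\int\Lip[f]^2\de\m$ gives $\tilde C\,\Ch[f]\le \Ks[f] \le \Ch[f]$. I do not anticipate a genuine obstacle here, since all the heavy lifting (existence of the pointwise limit, the two-sided estimates, and the quadratic structure at positive scales) has already been carried out; the only point requiring care is justifying the interchange of limit and integral, which is handled cleanly by the uniform bound $\Lipglob[f]$ and the finiteness of $\m$.
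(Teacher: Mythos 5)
Your proposal is correct and follows essentially the same route as the paper: dominated convergence via the uniform bound of Remark \ref{rmk:unifboundks} for \eqref{eq:convKS}, passage to the limit in the parallelogram identity of Proposition \ref{prop:KSrquad} for quadraticity, and the combination of Propositions \ref{prop:convofpotential} and \ref{prop:potestimate} (together with $|Df|_C=\Lip[f]$ from Theorem \ref{th:Cheeger}) for the pointwise sandwich and the energy comparison. No gaps; your explicit justification of the limit--integral interchange is exactly what the paper leaves implicit.
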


\begin{proof}
Relation \eqref{eq:convKS} is an easy consequence of the dominated convergence theorem (keep in mind Remark \ref{rmk:unifboundks}). Then $\Ks$ is a quadratic form on $\Lipfunc(\X)$, being the limit of the quadratic forms $\Ks_r$ (see Proposition \ref{prop:KSrquad}). The second part of the statement is the natural combination Proposition \ref{prop:convofpotential} and Proposition \ref{prop:potestimate}.
\end{proof}

\begin{corollary}\label{cor:ksquadr}
In locally Minkowski space $(\X,\di,\m)$, $\tilde \Ks$ is a quadratic form on $L^2(\X,\m)$, that is
\begin{equation*}
    \tilde\Ks[f+g] + \tilde\Ks[f-g]= 2 \tilde\Ks[f]+ 2 \tilde\Ks[g],
\end{equation*}
for every $f,g \in L^2(\X,\m)$.
\end{corollary}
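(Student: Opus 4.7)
The plan is to deduce the parallelogram identity for the relaxed energy $\tilde\Ks$ from its Lipschitz analogue, which is already available thanks to Corollary \ref{cor:convKS}. The key input is that $\Ks$ is a genuine quadratic form on $\Lipfunc(\X)$, so for Lipschitz $f,g$ the identity $\Ks[f+g]+\Ks[f-g]=2\Ks[f]+2\Ks[g]$ holds as an \emph{equality}; the only subtlety in transferring it to the relaxation is to convert the $\liminf$ appearing in the definition of $\tilde\Ks$ into honest limits along well-chosen approximating sequences. We treat the two inequalities separately and then combine them.

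For the upper bound, fix $f,g\in L^2(\X,\m)$ with $\tilde\Ks[f],\tilde\Ks[g]<\infty$ and $\varepsilon>0$. By definition of $\tilde\Ks$ and a diagonal extraction, pick $(f_n),(g_n)\subset\Lipfunc(\X)$ with $f_n\to f$, $g_n\to g$ in $L^2(\X,\m)$ and such that $\Ks[f_n]\to\ell_f\leq \tilde\Ks[f]+\varepsilon$, $\Ks[g_n]\to\ell_g\leq \tilde\Ks[g]+\varepsilon$ (both honest limits). Since $f_n\pm g_n\to f\pm g$ in $L^2(\X,\m)$ and each $f_n\pm g_n$ is Lipschitz, the parallelogram identity on $\Lipfunc(\X)$ gives $\Ks[f_n+g_n]+\Ks[f_n-g_n]=2\Ks[f_n]+2\Ks[g_n]$, so by superadditivity of $\liminf$
\begin{equation*}
\tilde\Ks[f+g]+\tilde\Ks[f-g]\leq \liminf_n \Ks[f_n+g_n]+\liminf_n \Ks[f_n-g_n]\leq \lim_n\bigl(2\Ks[f_n]+2\Ks[g_n]\bigr)\leq 2\tilde\Ks[f]+2\tilde\Ks[g]+4\varepsilon,
\end{equation*}
and letting $\varepsilon\to 0$ yields $\tilde\Ks[f+g]+\tilde\Ks[f-g]\leq 2\tilde\Ks[f]+2\tilde\Ks[g]$.

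For the reverse inequality, assume $\tilde\Ks[f+g],\tilde\Ks[f-g]<\infty$ and pick, for $\varepsilon>0$, Lipschitz sequences $(h_n),(k_n)$ with $h_n\to f+g$, $k_n\to f-g$ in $L^2(\X,\m)$ and $\Ks[h_n]\to \ell_{f+g}\leq \tilde\Ks[f+g]+\varepsilon$, $\Ks[k_n]\to \ell_{f-g}\leq \tilde\Ks[f-g]+\varepsilon$. Then $(h_n+k_n)/2\to f$ and $(h_n-k_n)/2\to g$ in $L^2(\X,\m)$, and rewriting the parallelogram identity on $\Lipfunc(\X)$ as $2\Ks[(h_n+k_n)/2]+2\Ks[(h_n-k_n)/2]=\Ks[h_n]+\Ks[k_n]$ gives
\begin{equation*}
2\tilde\Ks[f]+2\tilde\Ks[g]\leq 2\liminf_n \Ks\!\left[\tfrac{h_n+k_n}{2}\right]+2\liminf_n \Ks\!\left[\tfrac{h_n-k_n}{2}\right]\leq \lim_n\bigl(\Ks[h_n]+\Ks[k_n]\bigr)\leq \tilde\Ks[f+g]+\tilde\Ks[f-g]+2\varepsilon,
\end{equation*}
so sending $\varepsilon\to 0$ closes the chain of inequalities. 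Finally, the degenerate cases where some of the four values is $+\infty$ follow at once by contrapositive from these two inequalities: if $\tilde\Ks[f]+\tilde\Ks[g]=+\infty$ then the second inequality forces $\tilde\Ks[f+g]+\tilde\Ks[f-g]=+\infty$, and symmetrically for the reverse situation. The main obstacle is purely the bookkeeping of the $\liminf$s: the diagonal extraction turning $\Ks[f_n],\Ks[g_n]$ into convergent sequences is essential, since only then can the \emph{equality} provided by Corollary \ref{cor:convKS} be combined with the inequality $\liminf a_n+\liminf b_n\leq \liminf(a_n+b_n)$ without losing information.
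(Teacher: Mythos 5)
Your proposal is correct and follows essentially the same route as the paper: both deduce the parallelogram identity for $\tilde\Ks$ from the exact identity for $\Ks$ on $\Lipfunc(\X)$ (Corollary \ref{cor:convKS}) by using recovery sequences along which $\Ks$ converges and the superadditivity of the $\liminf$. The only difference is presentational: the paper proves one inequality with exact recovery sequences and declares the reverse ``analogous,'' whereas you spell out both directions (via the substitution $h_n=(f_n+g_n)$, $k_n=(f_n-g_n)$ and its inverse) together with the degenerate infinite-value cases.
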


\begin{proof}
We prove that 
\begin{equation*}
    \tilde\Ks[f+g] + \tilde\Ks[f-g]\leq 2 \tilde\Ks[f]+ 2 \tilde\Ks[g] \qquad \forall f,g \in L^2(\X,\m),
\end{equation*}
the opposite inequality can be proven analogously. We can then assume that $\tilde\Ks[f], \tilde\Ks[g] <\infty$, thus there exist two sequences $(f_n)_{n\in \N}\subset\Lipfunc(\X)$ and $(g_n)_{n\in \N}\subset\Lipfunc(\X)$, converging in $L^2(\X,\m)$ to $f$ and $g$ respectively, such that $\Ks[f_n]\to\tilde\Ks[f]$ and $\Ks[g_n]\to\tilde\Ks[g]$. Moreover, the sequences $(f_n+g_n)_{n\in \N},(f_n-g_n)_{n\in \N}\subset\Lipfunc(\X)$ converge in $L^2(\X,\m)$ to $f+g$ and $f-g$ respectively, therefore
\begin{equation*}
\begin{split}
    2 \tilde\Ks[f]+ 2 \tilde\Ks[g] = \liminf_{n\to 0} 2 \Ks[f_n]+ 2 \Ks[g_n] &= \liminf_{n\to 0} \Ks[f_n+g_n] + \Ks[f_n-g_n] \\
    &\geq \tilde\Ks[f+g] + \tilde\Ks[f-g],
\end{split}
\end{equation*}
this concludes the proof.
\end{proof}

The convergence of the Korevaar-Schoen potentials also ensures the convexity of the potential $\ks$, in the form of the next corollary. This result is important especially because it allows to prove a version of Mazur's lemma adapted to our setting, i.e. Lemma \ref{lem:mazur}.

\begin{corollary}\label{cor:ksasnorm}
Let $f$ be a finite convex combination of Lipschitz functions, that is $ f= \sum_{i=1}^{N} \lambda_i f_i$ with $f_i \in \Lipfunc(\X)$ for every $i$ and $\sum_{i=1}^{N}\lambda_1=1$. Then, it holds that 
\begin{equation}\label{eq:ksnorm}
    \ks[f] \leq \sum_{i=1}^{N} \lambda_i \ks[f_i],
\end{equation}
$\m$-almost everywhere.
\end{corollary}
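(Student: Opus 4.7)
The plan is to observe that $\ks_r[f](x)$ is an $L^2$-type norm of the difference quotient and therefore satisfies the triangle inequality (Minkowski's inequality) at every fixed scale $r>0$. Once this is established, we pass to the limit as $r \to 0^+$ using Proposition \ref{prop:convofpotential}, noting that a finite convex combination of Lipschitz functions is Lipschitz, so the pointwise limits of all the involved potentials exist $\m$-almost everywhere.

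More concretely, I would first fix $x \in \X$ and $r>0$, and view $\ks_r[f](x)$ as the norm in $L^2(B_r(x), \mu_{x,r})$ of the function $y \mapsto \frac{f(y)-f(x)}{r}$, where $\mu_{x,r}$ denotes the probability measure $\m \llcorner B_r(x)/\m(B_r(x))$. Writing $f(y)-f(x) = \sum_{i=1}^N \lambda_i (f_i(y) - f_i(x))$ and applying Minkowski's inequality in $L^2(B_r(x), \mu_{x,r})$ yields
\begin{equation*}
\ks_r[f](x) = \left\| \sum_{i=1}^N \lambda_i \frac{f_i(\cdot) - f_i(x)}{r} \right\|_{L^2(B_r(x),\mu_{x,r})} \leq \sum_{i=1}^N \lambda_i \left\| \frac{f_i(\cdot) - f_i(x)}{r} \right\|_{L^2(B_r(x),\mu_{x,r})} = \sum_{i=1}^N \lambda_i \ks_r[f_i](x),
\end{equation*}
which holds for every $x \in \X$ and every $r>0$.

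The second step is to take $r \to 0^+$. Since $f$ is a finite convex combination of Lipschitz functions, $f$ itself is Lipschitz, and Proposition \ref{prop:convofpotential} applies to $f$ and to each $f_i$. Hence there exists a set of full $\m$-measure on which $\ks_r[f](x) \to \ks[f](x)$ and $\ks_r[f_i](x) \to \ks[f_i](x)$ for every $i = 1, \dots, N$ as $r \to 0^+$. Passing to the limit in the pointwise inequality above on this common full-measure set gives the desired bound \eqref{eq:ksnorm}.

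There is no significant obstacle here: the only thing to verify is the elementary Minkowski inequality at fixed scale $r$, and the $L^2$-structure of $\ks_r$ makes this immediate; the limiting step is handled entirely by the convergence result already established in Proposition \ref{prop:convofpotential}.
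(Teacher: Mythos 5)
Your proposal is correct and follows essentially the same route as the paper: the fixed-scale inequality $\ks_r[f](x)\le\sum_i\lambda_i\ks_r[f_i](x)$ via the triangle (Minkowski) inequality in $L^2$ of the normalized ball measure, followed by passing to the limit $r\to 0^+$ on the common full-measure set provided by Proposition \ref{prop:convofpotential}. The only cosmetic difference is that the paper labels the fixed-scale step a ``Cauchy-Schwartz'' inequality and writes it out for $N=2$, while you correctly identify it as Minkowski's inequality and state it for general $N$.
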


\begin{proof}
We prove this result only in the case where $N=2$, the general case can be done in the same way. We are going to prove \eqref{eq:ksnorm} for every $x\in \X$ such that 
\begin{equation*}
     \ks[f_1](x)=\lim_{r\to 0^+} \ks_r[f_1](x), \qquad \ks[f_2](x)=\lim_{r\to 0^+} \ks_r[f_2](x) \qquad \text{and} \qquad  \ks[f](x)=\lim_{r\to 0^+} \ks_r[f](x),
\end{equation*}
notice that this set has $\m$-full measure, accordingly to Proposition \ref{prop:convofpotential}. For such an $x$ we can make the following computation
\begin{align*}
    \ks^2[f](x) &= \lim_{r\to 0}  \fint_{B_{r}(x)} \frac{|f(y)- f(x)|^2}{r^{2}} \de \m(y) \\
    &= \lim_{r\to 0}  \fint_{B_{r}(x)} \frac{|\lambda_1 (f_1(y)-f_1(x))+ \lambda_2 (f_2(y)-f_2(x))|^2}{r^2} \de \m(y) \\
    &\leq \lim_{r\to 0} \left[ \lambda_1 \left( \fint_{B_{r}(x)} \frac{|f_1(y)- f_1(x)|^2}{r^{2}} \de \m(y)\right)^\frac 12 + \lambda_2 \left( \fint_{B_{r}(x)} \frac{|f_2(y)- f_2(x)|^2}{r^{2}} \de \m(y)\right)^\frac 12   \right]^2\\
    &=\lim_{r \to 0} \big[\lambda_1\ks_r[f_1](x)+ \lambda_2 \ks_r[f_2](x)\big]^2\\
    &= \big[\lambda_1\ks[f_1](x)+ \lambda_2 \ks[f_2](x)\big]^2
\end{align*}
where the $\leq$ is a consequence of the Cauchy-Schwartz inequality. Taking the square root from both sides, we obtain the desired inequality.
\end{proof}

\begin{lemma} \label{lem:mazur}
   Given a Lipschitz function $f\in \Lipfunc(\X)$ and two sequences $\{f_i\}_{i\in \N}\subset \Lipfunc(\X)$ and $\{g_i\}_{i\in \N} \subset L^2(\X,\m)$ such that 
   \begin{equation*}
       f_i \to f \text{ in }L^2(\X,\m) \qquad \text{and} \qquad \ks[f_i] \leq g_i \leq K \,\,\,\, \m-a.e.,
   \end{equation*}
   for a fixed constant $K$, then 
   \begin{equation*}
       \ks[f](x)\leq \limsup_{i\to \infty} g_i(x),
   \end{equation*}
   for $\m$-almost every $x\in \X$.
\end{lemma}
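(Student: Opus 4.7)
The plan is to carry out a Mazur's lemma--type argument, exploiting the sublinearity of $\ks$ under convex combinations (Corollary \ref{cor:ksasnorm}) together with the weak $L^2$-compactness of the sequence $\{g_i\}$.

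\emph{Construction of convex combinations.} Since $0\le\ks[f_i]\le g_i\le K$ and $\m(\X)<\infty$, the sequence $\{g_i\}$ is bounded in $L^2(\X,\m)$. Up to a subsequence, $g_i\rightharpoonup g^*$ weakly in $L^2$ for some $g^*\in L^2(\X,\m)$. By Mazur's lemma there exist finite convex combinations $\tilde g_N:=\sum_{i\ge N}\alpha_i^N g_i$ (with $\alpha_i^N\ge 0$, $\sum_i\alpha_i^N=1$ and only finitely many nonzero) converging to $g^*$ strongly in $L^2$ and, by a further extraction, also $\m$-a.e. The pointwise bound $\tilde g_N\le\sup_{i\ge N}g_i$ combined with the monotone decay $\sup_{i\ge N}g_i\downarrow\limsup_i g_i$ yields the crucial estimate $g^*(x)\le\limsup_i g_i(x)$ for $\m$-a.e. $x$. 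Using the same coefficients I set $\tilde f_N:=\sum_{i\ge N}\alpha_i^N f_i\in\Lipfunc(\X)$; since $\|\tilde f_N-f\|_{L^2}\le \sum_i\alpha_i^N\|f_i-f\|_{L^2}\to 0$, I have $\tilde f_N\to f$ in $L^2$ and, up to a further subsequence, pointwise $\m$-a.e.

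\emph{Sublinearity.} Corollary \ref{cor:ksasnorm} gives, for $\m$-a.e.\ $x$,
\begin{equation*}
    \ks[\tilde f_N](x)\le\sum_{i\ge N}\alpha_i^N\ks[f_i](x)\le\tilde g_N(x).
\end{equation*}

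\emph{Main obstacle: passage to the limit.} It remains to prove $\ks[f](x)\le g^*(x)$ for $\m$-a.e.\ $x$, which combined with $g^*\le\limsup_i g_i$ concludes the proof. The plan is to establish the integrated bound
\begin{equation*}
    \int_E\ks^2[f]\,\de\m\le\int_E(g^*)^2\,\de\m
\end{equation*}
for every Borel $E\subset\X$, from which $\ks[f]\le g^*$ $\m$-a.e.\ follows by choosing $E=\{\ks[f]>g^*\}$. For each fixed $r>0$, Fatou's lemma applied to the defining double integral of $\ks_r^2$ (exploiting pointwise convergence $\tilde f_N\to f$) yields
\begin{equation*}
    \int_E\ks_r^2[f]\,\de\m\le\liminf_{N\to\infty}\int_E\ks_r^2[\tilde f_N]\,\de\m.
\end{equation*}
As $r\to 0$, the left-hand side converges to $\int_E\ks^2[f]\,\de\m$ by dominated convergence, the majorant being the constant $\Lipglob[f]^2$ (Remark \ref{rmk:unifboundks}). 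The hard part is to pass $r\to 0$ on the right-hand side in order to reach $\liminf_N\int_E\ks^2[\tilde f_N]\,\de\m$, which is in turn controlled by $\int_E\tilde g_N^2\,\de\m\to\int_E(g^*)^2\,\de\m$. I expect to resolve this limit interchange via a diagonal extraction combined with an Egorov-type argument, exploiting the uniform $L^\infty$-bound $\ks[\tilde f_N]\le K$ and the scale-$r$ sublinearity $\ks_r[\tilde f_N]\le\sum_{i\ge N}\alpha_i^N\ks_r[f_i]$ (the proof of Corollary \ref{cor:ksasnorm} applies verbatim at positive scale) to couple the vanishing of $r$ with the tail index $N$ of the Mazur combinations.
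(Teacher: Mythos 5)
Your argument is fine up to the point you yourself flag as the ``main obstacle,'' but that obstacle is the entire analytic content of the lemma, and the sketch you give for it does not close. The hypothesis only controls the limiting potentials $\ks[f_i]=\lim_{r\to 0}\ks_r[f_i]$; it gives no bound on $\ks_r[f_i]$ (hence on $\ks_r[\tilde f_N]$) at any fixed positive scale $r$ in terms of $g_i$ (hence $\tilde g_N$). The pointwise convergence $\ks_r[\tilde f_N]\to\ks[\tilde f_N]$ as $r\to 0$ (Proposition \ref{prop:convofpotential}) comes with no uniformity whatsoever in $N$ (the admissible radius depends on the function through the charts and the differentiability point), so an Egorov-plus-diagonal coupling of $r$ with $N$ would require a lower-semicontinuity statement of the form $\liminf_N\int_E\ks^2_{r_N}[\tilde f_N]\,\de\m\ge\int_E\ks^2[f]\,\de\m$ along a sequence $r_N\to 0$ chosen adversarially by the diagonal procedure. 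That is precisely a closedness/$\Gamma$-liminf property of $\ks$ under $L^2$-convergence, i.e.\ the same type of statement as the lemma you are trying to prove (your reduction to ``$\ks[f]\le g^*$ with $\tilde f_N\to f$ and $\tilde g_N\to g^*$ strongly in $L^2$'' is essentially the lemma again, with slightly improved convergence of the majorants). So as written the proof has a genuine gap at its central step, and the Fatou-at-fixed-$r$ computation does not help: for fixed $r$ one in fact has equality in the limit, and no comparison with $\tilde g_N$ is available.

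The missing ingredient, which the paper exploits and you never use, is the two-sided bound of Corollary \ref{cor:convKS}: from $\ks[f_i]\le g_i\le K$ one gets $\Lip[f_i]\le K/\sqrt{\tilde C}$ $\m$-a.e., so $\{f_i\}$ is bounded in the reflexive space $H_{1,2}(\X,\di,\m)$. Applying Mazur's lemma there (not in $L^2$ to the $g_i$'s) produces convex combinations $\hat f_n$ of the tails converging to $f$ \emph{strongly in $H_{1,2}$}; then $\norm{\Lip(f-\hat f_n)}_{L^2}\to 0$, hence $\norm{\ks(f-\hat f_n)}_{L^2}\to 0$ and, along a subsequence, $\ks(f-\hat f_n)\to 0$ $\m$-a.e. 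The sublinearity of $\ks$ (your Corollary \ref{cor:ksasnorm} step, which is correct) gives $\ks[\hat f_n]\le\sup_{i\ge n}g_i$, and the subadditive splitting $\ks[f]\le\ks[f-\hat f_n]+\ks[\hat f_n]$ then yields $\ks[f]\le\limsup_i g_i$ pointwise, with no interchange of the $r\to 0$ limit with the index ever needed. If you want to repair your proof, replace the weak $L^2$-compactness of $\{g_i\}$ by this $H_{1,2}$-compactness of $\{f_i\}$; the rest of your structure then collapses to the paper's argument.
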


\begin{proof}
Notice that the sequence $\{f_i\}_{i\in \N}\subset \Lipfunc(\X)\subset H_{1,2}(\X,\di,\m)$ is bounded in $H_{1,2}(\X,\di,\m)$, in fact Corollary \ref{cor:convKS} ensures that for every $i$
\begin{equation*}
     \Lip[f_i] \leq \frac{1}{\tilde C} \ks[f_i] \leq \frac K {\tilde C},
\end{equation*}
$\m$-almost everywhere. Moreover, since the Banach space $H_{1,2}(\X,\di,\m)$ is reflexive,  $\{f_i\}_{i\in \N}$ weakly converges (up to subsequences) to $f$. We can then apply a well known variant of Mazur's lemma (see \cite[Exercise 3.4]{MR2759829}) and deduce the existence of non-negative coefficients $\{a_{n,i}\}_{i\geq n}$ such that $\sum_{i=n}^{\infty} a_{n,i} =1$ and the functions $$\hat f_n= \sum_{i=n}^{\infty} a_{n,i} f_i$$ converge to $f$ strongly in $H_{1,2}(\X,\di,\m)$. Notice that, applying Corollary \ref{cor:ksasnorm} and keeping in mind the uniform bound on the functions $g_i$, we can deduce 
\begin{equation*}
    \ks[\hat f_n](x) \leq \sum_{i=n}^{\infty} a_{n,i} \ks[f_i](x) \leq \sum_{i=n}^{\infty} a_{n,i} g_i(x) \leq \sum_{i=n}^{\infty} a_{n,i} \cdot  \sup_{i\geq n} g_i(x)=  \sup_{i\geq n} g_i(x)
\end{equation*}
for $\m$-almost every $x\in \X$. On the other hand, since $\hat f_n \to f$ in $H_{1,2}(\X,\di,\m)$, it holds that $\norm{\Lip(f-\hat f_n)}_{L^2}\to 0$, then Corollary \ref{cor:convKS} yields that $\norm{\ks(f-\hat f_n)}_{L^2}\to 0$. As a consequence, up to possibly pass to a subsequence, we deduce that $\ks(f-\hat f_n)\to 0$, $\m$-almost everywhere. Therefore, using Corollary \ref{cor:ksasnorm} once again, we conclude
\begin{equation*}
    \ks[f](x) \leq \lim_{n \to \infty} \big[ \ks[f-\hat f_n](x) + \ks[\hat f_n](x) \big] \leq \lim_{n \to \infty} \big[ \ks[f-\hat f_n](x) + \sup_{i\geq n} g_i(x) \big] = \limsup_{i\to \infty} g_i(x),
\end{equation*}
for $\m$-almost every $x\in \X$.
\end{proof}

We are now going to prove the Rademacher theorem for locally Minkowski metric measure spaces. As a consequence, we will obtain an explicit form for the Korevaar-Schoen potential $\ks[f](x)$, for $\m$-almost every $x\in \X$, in term of the differential of $f$ in the point $x$ (Corollary \ref{cor:kswithdiff}).

\begin{prop}\label{prop:differential}
Let $(\X,\di,\m)$ be a locally Minkowski space and let $f:\X\to \R$ be a Lipschitz function. Then for $\m$-almost every $x \in \X$ we can find a sequence of radii $\{r_m\}_{m\geq 1} \to 0$ and a sequence of maps
\begin{equation*}
    i_m \colon (B_{2 r_m}(x),\di_{ r_m}) \to (\R^{n(x)}, \norm{\cdot})
\end{equation*}
satisfying for every $m \geq 1$ the following properties, where $\varepsilon_m=\frac 1m$:
\begin{enumerate}
    \item[1)] $i_m$ is a $\varepsilon_m$-bi-Lipschitz map with $i_m(x)=0$;
    \item[2)] $B_{2(1-\varepsilon_m)} (0)\subseteq i_m (B_{r_m}(x))$;
    \item[3)] we have that \begin{equation*}
        (1-\varepsilon_m)c(x) \cdot \Leb^{n(x)}\leq (i_m)_\# \bigg[\frac \m{\m(B_{r_m}(x))}\bigg] \leq (1+\varepsilon_m)c(x) \cdot \Leb^{n(x)},
    \end{equation*}
    on the set $i_m (B_{2r_m}(x))$;
    \item[4)] the sequence $g_m := \frac{(f-f(x))\circ (i_m)^{-1}}{r_m}$ converges uniformly on $B_1(0)$ to the linear function $g$ given by
\begin{equation*}
    g\colon(\R^{n(x)}, \norm{\cdot}) \to \R, \qquad g(z)= \scal{v_g}{z}.
\end{equation*}
\end{enumerate}
\end{prop}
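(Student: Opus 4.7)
The proposition is a Rademacher-type result: at $\m$-a.e.\ regular point $x$, the rescaled difference quotients of $f$, read through any sequence of good charts $i_m$ at shrinking scales, converge uniformly to an affine function on $(\R^{n(x)},\norm{\cdot})$. The strategy is to combine a blow-up/compactness argument with the classical Rademacher theorem in Euclidean space, applied through one \emph{fixed} chart centred at a nearby base point. First, fix $x\in\mathcal R(\X)$ and pick $r_m\to 0$ small enough that $r_m<r(1/m)$ in Definition~\ref{def:locMink}; the induced $\varepsilon_m$-bi-Lipschitz charts $i_m$ then satisfy properties~(1)-(3) of the statement with $\varepsilon_m=1/m$. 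Writing $z=i_m(y),\ z'=i_m(y')$ and using that $f$ is $L$-Lipschitz,
\begin{equation*}
    |g_m(z)-g_m(z')|=\frac{|f(y)-f(y')|}{r_m}\leq L\,\di_{r_m}(y,y')\leq L(1+\varepsilon_m)\,\norm{z-z'},
\end{equation*}
so $\{g_m\}$ is uniformly Lipschitz with $g_m(0)=0$. Since $(\R^{n(x)},\norm{\cdot})$ is locally compact, Arzelà–Ascoli yields a subsequence (not relabeled) converging uniformly on $B_1(0)$ to some Lipschitz function $g$.

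The core step is the identification of $g$ as a linear function, which is achieved via Rademacher in Euclidean space. By separability, cover $\mathcal R(\X)$ up to an $\m$-null set by countably many balls $B_{\rho_k}(y_k)$ on which the chart $i^{y_k,\rho_k}$ is $\varepsilon_k$-bi-Lipschitz with $\varepsilon_k\to 0$. For each $k$ the composition $\phi_k:=f\circ(i^{y_k,\rho_k})^{-1}$ is Lipschitz on an open subset of the finite-dimensional normed space $(\R^{n(y_k)},\norm{\cdot})$, hence differentiable at $\Leb^{n(y_k)}$-a.e.\ point by the classical Rademacher theorem. The absolute continuity estimate~\eqref{eq:pfmea} transfers this into differentiability of $\phi_k$ at $z_x:=i^{y_k,\rho_k}(x)$ for $\m$-a.e.\ $x\in B_{\rho_k}(y_k)$; denote the corresponding Euclidean differential by $v^{(k)}_x$. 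Taking the union over $k$ produces an $\m$-full set of good points.

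At such a good $x$, fix the index $k$ with $x\in B_{\rho_k}(y_k)$ and consider the transition map $T_m:=i_m\circ(i^{y_k,\rho_k})^{-1}$. Properly rescaled by $\rho_k/r_m$ and restricted to a neighbourhood of $z_x$, $T_m$ is $O(\varepsilon_m+\varepsilon_k)$-bi-Lipschitz, and (after further thinning the sequence $r_m$) converges to a linear isometry $T:(\R^{n(y_k)},\norm{\cdot})\to(\R^{n(x)},\norm{\cdot})$; in particular $n(x)=n(y_k)$. Substituting the first-order expansion $\phi_k(w)=\phi_k(z_x)+\scal{v^{(k)}_x}{w-z_x}+o(\norm{w-z_x})$ into the definition of $g_m$ and absorbing the error through the uniform Lipschitz bound from the first step yields $g_m\to g$ uniformly on $B_1(0)$ with $g(z)=\scal{v^{(k)}_x\circ T^{-1}}{z}=:\scal{v_g}{z}$, which is the linear function required in~(4). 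The main obstacle is precisely this final transfer: Euclidean Rademacher only delivers pointwise differentiability of $\phi_k$ at $z_x$ (a Taylor expansion at a single point), whereas~(4) demands uniform convergence of $g_m$ on the \emph{whole} ball $B_1(0)$. Bridging this gap relies on the interplay between the uniform Lipschitz bound on $\{g_m\}$ and the uniform convergence on $B_1(0)$ of the rescaled transition maps $T_m$, which itself crucially uses the strong absolute continuity~\eqref{eq:pfmea} built into the locally Minkowski assumption.
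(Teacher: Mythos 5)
The compactness half of your argument (construction of the rescaled charts $i_m$, the equi-Lipschitz bound on $g_m$, Arzelà--Ascoli plus a diagonal extraction) is exactly the paper's first step, modulo minor bookkeeping (to get property 2) with radius $2(1-\varepsilon_m)$ one has to rescale a chart at a strictly larger scale $\tilde r_m$, as the paper does with $r_m=\tilde r_m/(m+1)$; a chart taken directly at scale $r_m$ does not obviously cover $B_{2(1-\varepsilon_m)}(0)$). The genuine gap is in your identification of the limit $g$ as linear. You apply the Euclidean Rademacher theorem to $\phi_k=f\circ(i^{y_k,\rho_k})^{-1}$ in a \emph{fixed} chart centred at a nearby point $y_k$, and then claim that the rescaled transition maps $T_m=i_m\circ(i^{y_k,\rho_k})^{-1}$ converge to a \emph{linear isometry} $T$. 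Nothing in Definition \ref{def:locMink} supports this: $i^{y_k,\rho_k}$ is only $\varepsilon_k$-bi-Lipschitz with $\varepsilon_k>0$ \emph{fixed} once $x$ is fixed, so the blow-ups of $T_m$ at $i^{y_k,\rho_k}(x)$ are only $(1+O(\varepsilon_m+\varepsilon_k))$-bi-Lipschitz and their subsequential limits are merely $(1+O(\varepsilon_k))$-bi-Lipschitz maps fixing the origin --- not isometries, and a fortiori not linear. Consequently what your computation actually yields is $g=\scal{v^{(k)}_x/\rho_k}{S(\cdot)}$ with $S$ a nonlinear bi-Lipschitz map, which does not give property 4). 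Even if you upgraded the covering so that the distortion of the reference chart at $x$ tends to $0$ (charts of distortion $1/j$ for every $j$), the limit transition is only an isometric embedding of a ball, and deducing that it is affine requires an additional theorem of Mazur--Ulam/Mankiewicz type together with the near-surjectivity of the charts, plus a compatibility argument for the derivatives $v^{(k)}_x$ across charts; none of this is in your proposal, and it is precisely the crux of the proposition (the expression $\scal{v^{(k)}_x\circ T^{-1}}{z}$ is only a linear functional if $T$ is linear, which is what had to be proved).

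The paper takes a different route at exactly this point: following \cite[Chapter 10]{MR1708448}, it shows that for $\m$-a.e.\ $x$ the blow-up limit $g$ is a \emph{generalized linear} function on $(\R^{n(x)},\norm{\cdot})$ (harmonic, with constant pointwise Lipschitz constant), and then Lemma \ref{lem:lin} --- which uses the $C^1$-regularity of the norm through an explicit computation of the Busemann functions ${\rm b}^\pm$ of a line --- forces such a function to be linear. This avoids transition maps altogether: the linearity comes from the structure of generalized linear functions on a Minkowski space, not from comparing two charts. Note also that the paper's use of Euclidean Rademacher through fixed charts (in Proposition \ref{prop:convofpotential}) is confined to computing quantities \emph{inside the same chart} where differentiability holds, which is why the chart distortion causes no harm there; your attempt to transport the pointwise differential from one chart into the blow-up sequence $i_m$ is where the argument breaks. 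To repair your proof you would either have to supply the affine-rigidity step for limits of the transition maps (with distortion genuinely tending to $0$ at $x$), or revert to the generalized-linear/Busemann argument of the paper.
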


\noindent Before presenting the proof of this result, we recall some preliminary notions used therein. In fact, the argument showing the validity of point 4) in Proposition \ref{prop:differential} closely follows some of the ideas presented in \cite{MR1708448}, as we specify now.\\

\underline{\emph{Notions and results required in the proof of Proposition \ref{prop:differential}.}}
We start by introducing some  useful definitions in the setting of metric spaces $(\X, \di)$. A curve $\gamma \colon [0, \infty) \to \X$ is said to be a \emph{half line} if it holds
\[
\di(\gamma_t, \gamma_s) = |t - s|, \text{ for any } t, s \ge 0,
\]
while a curve $\gamma \colon \R \to \X$ is called a \emph{line} if it holds
\[
\di(\gamma_t, \gamma_s) = |t - s|, \text{ for any } t, s \in \R.
\]

\noindent To a half line $\gamma$ we can associate the \emph{Busemann function} ${\rm b} \colon \X \to \R$ defined by setting
\[
{\rm b} (x) := \inf_{s \ge 0} b_{\gamma, s}(x) = \lim_{s \to +\infty} b_{\gamma, s}(x),
\]
where $b_{\gamma, s}(x) := \di (x, \gamma(s)) - s$, being $s$ the arc-length parameter. We remark that this function is actually well-defined, since the triangle inequality ensures that the functions $b_{\gamma, s}(x)$ are uniformly bounded below on any compact subset and $b_{\gamma, s_2} \le b_{\gamma, s_1}$ if $s_1 \le s_2$. In particular, the Busemann function can be equivalently defined using the infimum on $s \ge 0$ or the limit as $s \to +\infty$. 

A similar construction allows us to associate two Busemann functions $\rm b^+, \rm b^-$ to a line $\gamma \colon \R \to \X$ by posing
\[
\begin{split}
{\rm b}^+(x) &:= \inf_{s \ge 0} b_{\underline{\gamma}, s}(x) = \lim_{s \to +\infty} b_{\underline{\gamma}, s}(x), \\
{\rm b}^-(x) &:= \inf_{s \ge 0} b_{-\underline{\gamma}, s}(x) = \lim_{s \to +\infty} b_{-\underline{\gamma}, s}(x),
\end{split}
\]
where $\underline\gamma = \gamma|_{[0, \infty)}$ is the half line associated to $\gamma$ and we define $-\underline\gamma \colon [0, \infty) \to \X$ as $-\underline\gamma(s) := \gamma(-s)$. A direct application of the triangular inequality implies that
\[{\rm b}^+ + {\rm b}^- \ge 0.\]

Another crucial notion we will use in the following is the one of \emph{generalized linear functions}:

{\begin{definition}{\cite[Definition 7.1, Definition 8.1]{MR1708448}} A Lipschitz function $f \in \Lipfunc(\X)$ is said to be
\begin{itemize} 
\item[a)]   \emph{harmonic} if for every bounded open set $U\subset \X$ and every $h \in H_{1, 2}(\X,\di,\m)$ with $\text{supp}(h) \subset \subset U$, it holds that
\begin{equation*}
     \| |D(f + h)|_C \|_{L^2} \ge \| |D f|_C \|_{L^2} = \| \Lip[f] \|_{L^2},
\end{equation*}
\item[b)] \emph{generalized linear} if 
\begin{itemize}
\item[(i)] either $f \equiv 0$ or the range of $f$ is $(-\infty, +\infty)$;
\item[(ii)] $f$ is harmonic;
\item[(iii)] $\Lip[f] \equiv c$ for some $c \in \R$.
\end{itemize}
\end{itemize}
\end{definition}}

In the setting of metric spaces $(\X, \di)$ equipped with a doubling measure $\m$, in the sense of \eqref{eq:doubling}, and supporting a Poincar\'e inequality \eqref{eq:poincare}, the class of generalized linear functions satisfies some remarkable properties, as deeply studied in \cite[Chapter 8]{MR1708448}. In particular, in this framework, a first useful result consists in \cite[Theorem 8.10]{MR1708448}, which states that if $f \colon \X \to \R$ is a generalized linear function, then for any $\bar x \in \X$ there exists a line $\gamma \colon \R \to \X $ with $\gamma(0) = \bar x$ and with the property that $\gamma$ is an \emph{integral curve} for $| D f|_C = \Lip(f)$. A further result in this direction is given by \cite[Theorem 8.11]{MR1708448} and it ensures the validity of the following chain of inequalities
\begin{equation}\label{eq:genlin}
f(\bar x) - \Lip(f) \cdot {\rm b}^+ \le f \le f(\bar x) + \Lip(f) \cdot {\rm b}^-,
\end{equation}
where $\gamma$ is the line provided by \cite[Theorem 8.10]{MR1708448}. 

{\begin{lemma}\label{lem:lin}
Let $\R^n$ be equipped with a $C^1$-norm $|| \cdot ||$ and $f \colon \X \to \R$ be a generalized linear function. Then it holds that
\begin{equation}\label{eq:linear}
f(\bar x) - \Lip(f) \cdot {\rm b}^+ = f = f(\bar x) + \Lip(f) \cdot {\rm b}^-,
\end{equation}
where ${\rm b}^+$ and ${\rm b}^-$ are the Busemann functions associated to the line $\gamma$ with $\gamma(0)=0$ provided by \cite[Theorem 8.10]{MR1708448}. Moreover, $f$ is a linear function.
\end{lemma}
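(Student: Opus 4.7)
The plan is to turn the one-sided inequalities in \eqref{eq:genlin} into equalities by computing the Busemann functions ${\rm b}^+, {\rm b}^-$ explicitly in the $C^1$-norm setting and reading off the conclusion from there. The lemma is already phrased at the point $\bar x = 0$; by further replacing $f$ with $f - f(0)$ and (if $\Lip(f) > 0$) rescaling, we may assume $f(0) = 0$ and $\Lip(f) = 1$. The trivial case $\Lip(f) = 0$ is handled by item (i) in the definition of generalized linearity, which forces $f \equiv 0$, and linearity is then obvious.

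By \cite[Theorem 8.10]{MR1708448} there exists a line $\gamma \colon \R \to \R^n$ with $\gamma(0) = 0$ which is an integral curve of $|Df|_C \equiv 1$; inequality \eqref{eq:genlin} then reads $-{\rm b}^+ \le f \le {\rm b}^-$. The first task is to prove that $\gamma$ is straight in $\R^n$, i.e.\ $\gamma(t) = tv$ with $\norm{v} = 1$. The idea is that the integral-curve identity $(f\circ\gamma)(t) = t$ saturates the rate-of-change inequality given by $\Lip(f) \equiv 1$, and the $C^1$-regularity of $\norm{\cdot}$ (equivalent to strict convexity of the dual norm) pins down the direction dual to $\nabla f$ uniquely. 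Combining this with compactness of the unit sphere, the asymptotic direction $v := \lim_{s\to\infty}\gamma(s)/s$ is well defined (up to a subsequence), and the unit-speed relation $\norm{\gamma(t)-\gamma(s)} = |t-s|$ then forces $\gamma$ to coincide with the straight line $t\mapsto tv$.

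Once $\gamma$ is straight, both Busemann functions can be computed from the first-order Taylor expansion of $\norm{\cdot}$ at $v$: writing $\norm{x + sv} = s\norm{v + x/s} = s\bigl(1 + \scal{\nabla\norm{\cdot}(v)}{x/s} + o(1/s)\bigr)$ as $s\to\infty$ yields
\begin{equation*}
{\rm b}^-(x) = \lim_{s\to\infty}\bigl(\norm{x + sv} - s\bigr) = \scal{\nabla\norm{\cdot}(v)}{x}, \qquad {\rm b}^+(x) = -\scal{\nabla\norm{\cdot}(v)}{x},
\end{equation*}
so ${\rm b}^+ + {\rm b}^- \equiv 0$, consistent with and sharpening the triangle-inequality bound ${\rm b}^+ + {\rm b}^- \ge 0$. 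Inserting these into \eqref{eq:genlin} squeezes $f$ between two identical linear quantities, delivering both \eqref{eq:linear} and the explicit formula $f(x) = \scal{\nabla\norm{\cdot}(v)}{x}$, which is manifestly linear.

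The main obstacle I anticipate is the rigorous verification that $\gamma$ is straight. Although $C^1$-regularity of $\norm{\cdot}$ rules out many exotic geodesics in $(\R^n,\norm{\cdot})$, a careful argument is needed to exclude curved unit-speed geodesics that could also be integral curves of a harmonic $f$. A plausible workaround, should straightness prove delicate, is to bypass it altogether by working directly with the asymptotic direction $v$: uniform $C^1$ Taylor expansions of $\norm{\cdot}$ on compact subsets of the unit sphere should suffice to show that the Busemann functions of $\gamma$ and of $t\mapsto tv$ agree, which is all that the rest of the argument requires.
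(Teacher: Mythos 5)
Your proposal follows essentially the same route as the paper: the paper likewise reduces \eqref{eq:linear} to showing that ${\rm b}^+=-{\rm b}^-$ is linear, computes ${\rm b}^{\pm}(x)=\mp\scal{\nabla \norm{\cdot}(v)}{x}$ from the first-order expansion of the $C^1$-norm along the line, and concludes by squeezing $f$ through \eqref{eq:genlin}. The only divergence is the straightness of $\gamma$, which the paper simply asserts in one sentence, so the step you flag as delicate (together with your fallback via uniform $C^1$ expansions at the asymptotic direction, which is the robust way to justify it) addresses a point the paper glosses over rather than departing from its argument.
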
}

\begin{proof}
{ In view of \eqref{eq:genlin},} the result follows if we prove that ${\rm b}^+ =- {\rm b}^-$ {and that ${\rm b}^+$ is a linear function}. In order to prove it, we first notice that every line $\gamma$ in the space $(\R^n,\norm{\cdot})$ with $\gamma(0)=0$ is of the type $\gamma(s)=sv$, where $v\in \R^n$ is a unit vector, that is $\norm{v}=1$. In particular, we can compute the Busemann function ${\rm b}^+$:
\begin{equation*}
    \begin{split}
        {\rm b}^+(x) = \lim_{s \to +\infty} b_{\underline{\gamma}, s}(x) &= \lim_{s \to +\infty} \norm{sv-x}- \norm{sv}= \lim_{s \to +\infty} s \cdot [\norm{v-x/s}- \norm{v}] =\\
        &=\lim_{t \to 0} \frac{\norm{v-tx}- \norm{v}}{t} = -\scal{\nabla \norm{\cdot}(v)}{x}.
    \end{split}
\end{equation*}
With the analogous computation for ${\rm b}^-$, we show that ${\rm b}^-(x) = \scal{\nabla \norm{\cdot}(v)}{x}$, concluding the proof.
\end{proof}


\begin{proof}[Proof of Proposition \ref{prop:differential}]
Let $x \in \mathcal R (\X)$ be a fixed regular point. We recall that the locally Minkowski property ensures that for every $\varepsilon > 0$ we can find a radius $r(\varepsilon) > 0$ for which the map $i^{x, r} \colon (B_r(x), \di_r) \to (\R^{n(x)}, \norm{\cdot})$ is $\varepsilon$-bi-Lipschitz for any $r < r(\varepsilon)$ and for which the condition on the pushforward measure of $\m^{x, r}$ through the map $i^{x, r}$ expressed in \eqref{eq:pfmea} holds true. Hence, for any $m \in \N$, $m \ge 1$, we take $\tilde r_m<r(\varepsilon_m)$ and we set $r_m := \frac{\tilde r_m}{m+1}$. Then, for every $m\geq 1$, we consider the map 
\begin{equation*}
    \tilde i _m : (B_{\tilde r_m}(x), (m+1)\di_{ \tilde r_m}) =(B_{\tilde r_m}(x),\di_{ r_m})  \to (\R^{n(x)}, \norm{\cdot})
\end{equation*}
defined as $\tilde i_m (y) = (m+1)\cdot i^{x, \tilde r_m}$. The properties of $i^{x, \tilde r_m}$ transfer to its rescaling $\tilde i_m $, in particular $\tilde i_m (x)=0$, $B_{m}(0) \subseteq \tilde i_m(B_{\tilde r_m }(x))$, $\tilde i_m$ is $\varepsilon_m$-bi-Lipschitz and 
\begin{equation*}
    (1-\varepsilon_m)c(x) \cdot \Leb^{n(x)}\leq \big(\tilde i_m\big)_\# \frac{\m}{\m(B_{r_m}(x))} \leq (1+\varepsilon_m)c(x) \cdot \Leb^{n(x)}, \quad \text{on }\tilde i_m (B_{\tilde r_m}(x)).
\end{equation*}
Notice that the functions 
\begin{equation*}
    g_m := \frac{(f-f(x))\circ (\tilde i_m)^{-1}}{r_m}
\end{equation*}
are equi-Lipschitz, in fact for every $a,b \in \tilde i_m(B_{\tilde r_m}(x))$ it holds that
\begin{equation*}
    \begin{split}
        |g_m(a) - g_m(b)| &= \bigg| \dfrac{f( \tilde i_m^{-1}(a))- f(\tilde i_m^{-1}(b))}{r_m} \bigg| \leq \Lip(f) \dfrac{\di\big(\tilde i_m^{-1}(a), \tilde i_m^{-1}(0)\big)}{r_m}\\
        &= \Lip(f)\di_{r_m}\big(i_m^{-1}(a), i_m^{-1}(0)\big) \leq \Lip(f) (1+ \varepsilon_m) \norm{a-b}\leq 2 \Lip(f) \norm{a-b}.
    \end{split}
\end{equation*}
Observe moreover that, for every fixed $\bar m>1$, the function $g_m$ is defined on the ball $B_{\bar m}(0)$, for every $m\geq \bar m$. In particular, for every $\bar m$, since $g_m(0)=0$ for each $m\geq 1$, the family $\{g_m|_{B_{\bar{m}}}\}_{m \ge \bar{m}}$ is uniformly bounded and uniformly equicontinuous, thus it is sequentially compact with respect to the uniform convergence. Up to passing to a subsequence identified with a diagonal argument, we can assume that the sequence  $\{g_{m}\}_{m \ge \bar m}$ converges uniformly to a continuous function $\bar g_{\bar{m}}$ on $B_{\bar m}(0)$. Moreover, the fact that the whole sequence has bounded Lipschitz constant (by $2 \Lip(f)$) guarantees that also the limit function $\bar g_{\bar m}$ is a Lipschitz function (with Lipschitz constant at most equal to $2 \Lip(f)$). Therefore, taking the limit as $\bar m \to \infty$ we find a $2\Lip(f)$-Lipschitz function $g \colon (\R^{n(x)}, || \cdot ||) \to \R$ with the property that $g|_{B_{\bar m}(0)} = \bar g_{\bar m}$ for any $\bar m \ge 1$.

Arguing as in \cite[Chapter 10]{MR1708448}, it is possible to prove that, for $\m$-almost every $x\in \mathcal R (\X)$, the function $g$ we obtained is generalized linear, and thus linear accordingly to Lemma \ref{lem:lin}. In particular, $g(z)= \scal{v_g}{z}$, for a suitable vector $v_g \in \R^{n(x)}$. Now, it is easy to realize that, for any suitable $x$, the sequence of maps $\{i_m\}_{m\geq 1}$ defined as
\begin{equation*}
    i_m := \tilde i _m |_{B_{2 r_m}(x)} : (B_{2 r_m}(x), \di_{r_m}) \to (\R^{n(x)}, \norm{\cdot})
\end{equation*}
satisfies the requirements of Proposition \ref{prop:differential}.
\end{proof}

\begin{corollary}\label{cor:kswithdiff}
Under the assumptions of Proposition \ref{prop:differential} it holds that
\begin{equation*}
    \ks^2[f](x) = \fint_{B_1(0)} |g|^2 \de \Leb^{n(x)},
\end{equation*}
for $\m$-almost every $x$.
\end{corollary}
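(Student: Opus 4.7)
The plan is to start from the definition of $\ks_{r_m}^2[f](x)$ at the scale $r_m$ provided by Proposition \ref{prop:differential} and push the integral through the chart $i_m$ to rewrite everything in terms of $g_m$ on a Euclidean ball; then pass to the limit $m \to \infty$ using the uniform convergence $g_m \to g$ on $B_1(0)$ together with the quantitative control of the chart and the pushforward measure. Since Proposition \ref{prop:convofpotential} guarantees that $\ks^2[f](x) = \lim_{r \to 0} \ks_r^2[f](x)$ for $\m$-a.e.\ $x$, it suffices to evaluate this limit along the specific sequence $r_m \to 0$.

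First, I would use the change of variable $z = i_m(y)$ to obtain
\[
\ks_{r_m}^2[f](x) = \int_{i_m(B_{r_m}(x))} |g_m(z)|^2 \, d[(i_m)_\#\m^{x,r_m}](z),
\]
where $\m^{x,r_m} = \m/\m(B_{r_m}(x))$ and we used that $f(y)-f(x) = r_m \, g_m(i_m(y))$. By property 3) in Proposition \ref{prop:differential}, the measure $(i_m)_\#\m^{x,r_m}$ is sandwiched between $(1\pm\varepsilon_m)\,c(x) \cdot \Leb^{n(x)}$ on $i_m(B_{2r_m}(x))$. By property 1) the map $i_m$ is $\varepsilon_m$-bi-Lipschitz with $i_m(x)=0$, so $i_m(B_{r_m}(x))$ is contained in $B_{1+\varepsilon_m}(0)$ and, by property 2), it contains $B_{1-\varepsilon_m}(0)$. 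Hence, using the uniform bound $|g_m| \le 2\Lip(f)$ established inside the proof of Proposition \ref{prop:differential}, the symmetric difference between $i_m(B_{r_m}(x))$ and $B_1(0)$ contributes at most $O(\varepsilon_m)$ to the integral, giving
\[
\ks_{r_m}^2[f](x) = c(x) \int_{B_1(0)} |g_m|^2 \, d\Leb^{n(x)} + O(\varepsilon_m).
\]

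Next, since $g_m \to g$ uniformly on $B_1(0)$ by property 4) and all functions are bounded by $2\Lip(f)$, dominated convergence (or direct estimate via uniform convergence on a bounded set) yields
\[
\lim_{m\to\infty} \int_{B_1(0)} |g_m|^2 \, d\Leb^{n(x)} = \int_{B_1(0)} |g|^2 \, d\Leb^{n(x)}.
\]
Recalling from Remark \ref{rmk:deflocMink}(ii) that $c(x) = \Leb^{n(x)}(B_1(0))^{-1}$, this multiplicative factor converts the integral into the mean integral, so that
\[
\lim_{m\to\infty} \ks_{r_m}^2[f](x) = \fint_{B_1(0)} |g|^2 \, d\Leb^{n(x)}.
\]
Combining this with Proposition \ref{prop:convofpotential}, which ensures the pointwise limit of $\ks_r^2[f](x)$ as $r \to 0$ exists and therefore coincides with the limit along the subsequence $r_m$, yields the claimed identity.

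The only delicate point is the control of the approximation errors coming simultaneously from the $\varepsilon_m$-bi-Lipschitz distortion of $i_m$, the measure comparison in property 3), and the mismatch between the domain $i_m(B_{r_m}(x))$ and the target ball $B_1(0)$. However each of these errors is uniformly $O(\varepsilon_m)$ when tested against the integrand $|g_m|^2$, which is uniformly bounded by $4\Lip(f)^2$, so they vanish in the limit. No further compactness argument is needed because the limit function $g$ is already identified by property 4) of Proposition \ref{prop:differential}.
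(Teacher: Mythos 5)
Your proposal is correct and follows essentially the same route as the paper's proof: pass to the limit along the scales $r_m$ via Proposition \ref{prop:convofpotential}, push the integral forward through $i_m$, sandwich the pushforward measure between $(1\pm\varepsilon_m)c(x)\Leb^{n(x)}$ on the balls $B_{1\mp\varepsilon_m}(0)$, let $m\to\infty$ using the uniform convergence $g_m\to g$, and absorb $c(x)=\Leb^{n(x)}(B_1(0))^{-1}$ into the mean integral. Your explicit use of the uniform bound $|g_m|\le 2\Lip(f)$ to control the contribution of the annulus between $i_m(B_{r_m}(x))$ and $B_1(0)$ is a slightly more careful bookkeeping of an error term the paper handles implicitly, but it is not a different argument.
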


\begin{proof}
First of all, notice that, according to Proposition \ref{prop:convofpotential}, for $\m$-almost every point $x$ where Proposition \ref{prop:differential} holds, we have
\begin{equation*}
    \ks^2[f](x)= \lim_{m\to \infty} \ks^2_{r_m}[f](x) = \lim_{m\to \infty} \fint_{B_{r_m}( x)} \frac{|f(x')-f(x)|^2}{r_m^2} \de \m (x').
\end{equation*}
We can now estimate the right-hand side, shifting the computation on $\R^{n(x)}$ through the maps $\{i_m\}_{m\geq 1}$:
\begin{equation}\label{eq:ksr_m}
    \begin{split}
        \fint_{B_{r_m}(x)} \frac{|f(x')-f(x)|^2}{r_m^2} \de \m(x') &= \int_{B_{r_m}(x)}
        \frac{|f(x')-f(x)|^2}{r_m^2} \de \bar \m(x') \\
        &= \int_{i_m(B_{r_m}(x))} |g_m(z)-g_m(0)|^2 \de [(i_m)_\# \bar \m] (z) \\
        &= \int_{i_m(B_{r_m}(x))} |g_m(z)|^2 \de [(i_m)_\# \bar \m] (z) ,
    \end{split}
\end{equation}
where $\bar \m$ denotes the normalized measure $\frac{\m}{\m(B_{r_m}(x))}$.
On the other hand, since $i_m$ is $\varepsilon_m$-bi-Lipschitz, it holds that
\begin{equation*}
    B_{(1-\varepsilon_m)} (0) \subseteq i_m (B_{r_m}(x)) \subseteq  B_{(1+\varepsilon_m) } (0),
\end{equation*}
then, taking into account \eqref{eq:ksr_m} and property 3 in Proposition \ref{prop:differential}, we can obtain the following estimates:
\begin{equation*}
    \ks^2_{r_m}[f](x) \leq (1+\varepsilon_m)\int_{B_{(1+\varepsilon_m) } (0)} |g_m(z)|^2 \de [c(x)\cdot \Leb^{n(x)}] (z)
\end{equation*}
and 
\begin{equation*}
    \ks^2_{r_m}[f](x) \geq (1-\varepsilon_m)\int_{B_{(1-\varepsilon_m) } (0)} |g_m(z)|^2 \de [c(x)\cdot \Leb^{n(x)}] (z).
\end{equation*}
Combining these two inequalities at the limit $m\to \infty$ we conclude that
\begin{equation*}
\begin{split}
    \ks^2[f](x)= \lim_{m\to \infty} \ks^2_{r_m}[f](x) =  \int_{B_{1 } (0)} |g(z)|^2 \de [c(x)\cdot \Leb^{n(x)}] (z) = \fint_{B_{1 } (0)} |g(z)|^2 \de  \Leb^{n(x)}(z),
\end{split}
\end{equation*}
where the last equality follows from point (ii) in Remark \ref{rmk:deflocMink}.
\end{proof}

\begin{remark}\label{rmk:nonunique}
Our version of Rademacher theorem only proves existence of a differential, which takes the form of a linear function on the tangent space, for $\m$-almost every point. We want to emphasize that, although it is possible to prove relative uniqueness results for the differential in the metric setting (see for example \cite[Theorem 4.38]{MR1708448}), it is impossible to achieve uniqueness in Proposition \ref{prop:differential}. The reason is that our notion of differential is really tailored for spaces satisfying the locally Minkowski assumption, which is strictly local and does not require any consistency on the ``charts" $i^{x,r}$. We clarify this sentence with an example. Consider the metric measure space $(\R^n,\norm{\cdot},\Leb^n)$, where $\norm{\cdot}$ is a $C^1$ norm, which is obviously locally Minkowski. For the origin $0$ we can actually choose the maps $i^{0,r}$ to be proper restrictions of the identity map $\text{Id}$. In this case the differential of the function $f(x)=\scal{v}{x}$ in $0$ will be $f$ itself. However, we can choose the maps $i^{0,r}$ to be proper restrictions of the map $-\text{Id}$ and in this case the differential of in $0$ will be $-f$. Of course we can have also the intermediate situation, where some $i^{0,r}$ are restrictions of $\text{Id}$ and some other are restriction of $-\text{Id}$ (and this happens in particular when $r\to 0$). In this case both $f$ and $-f$ are suitable differentials for $f$ in $0$. As it can be guessed from this example, it could be possible to prove a uniqueness result assuming some consistency property on the maps $i^{x,r}$. However, the existence result provided by Proposition \ref{prop:differential} is sufficient for our purposes.
\end{remark}

\section{Main Result}

In this last section we show the main result of this work, that is the existence on a locally Minkowski metric measure space $(\X, \di, \m)$ of a distance $\di'$  equivalent to $\di$  such that $(\X, \di', \m)$ is infinitesimally Hilbertian. Moreover, for the whole section we assume $(\X, \di, \m)$ to be a length metric measure space, to satisfy the doubling condition and to support a Poincaré inequality. \\

We start by introducing the natural candidate for this distance $\di'$, that is the intrinsic distance associated to the Korevaar-Schoen energy $\Ks$:
\begin{equation}\label{eq:defdks}
    \di_\Ks(x,y)= \sup \{f(y)-f(x)\,:\, f\in \Lipfunc(\X)\, \text{ and }\, \ks[f]\leq 1 \,\, \m-a.e.\}.
\end{equation} 
We are actually going to show that the Cheeger energy $\Ch_\Ks$ associated to the distance $\di_\Ks$ is nothing but the Korevaar-Shoen energy $\tilde \Ks$, which is quadratic (Corollary \ref{cor:ksquadr}). Observe that this distance is not a priori equal to the intrinsic distance associated to the Korevaar-Schoen energy $\tilde \Ks$, but in our case of interest they will turn out to be equal. Therefore, the fact that the definition of $\di_\Ks$ is done considering only Lipschitz function can be seen as a technical choice, which allow us to apply the results developed in the last section. We will go back to this point later, when we will prove the equivalence of the two approaches.

\begin{prop}
$\di_\Ks$ is a distance on the space $\X$.
\end{prop}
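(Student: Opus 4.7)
The plan is to verify the four defining properties of a distance together with finiteness of $\di_\Ks(x,y)$. Three of them come essentially for free from the structure of $\di_\Ks$ as a supremum. Non-negativity follows by testing with $f \equiv 0$. Symmetry follows from the identity $\ks_r[-f] = \ks_r[f]$ (pointwise, by direct inspection of the energy density), so the admissibility condition $\ks[f] \leq 1$ is invariant under $f \mapsto -f$. The triangle inequality is obtained by splitting $f(z) - f(x) = (f(z) - f(y)) + (f(y) - f(x))$ and passing to the supremum on each piece.

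The real content lies in showing that $\di_\Ks(x,y) \in (0, \infty)$ whenever $x \neq y$, and for both directions the crucial tool is the pointwise two-sided comparison $\sqrt{\tilde C}\, \Lip[f] \leq \ks[f] \leq \Lip[f]$ valid $\m$-almost everywhere for every $f \in \Lipfunc(\X)$, supplied by Corollary \ref{cor:convKS}. For positivity I plan to exhibit a concrete admissible competitor separating $x$ and $y$, namely the truncated distance $f_{x,y}(z) := \max\{0, \di(x,y) - \di(y,z)\}$. This function is globally $1$-Lipschitz, hence $\Lip[f_{x,y}] \leq 1$ and consequently $\ks[f_{x,y}] \leq 1$ $\m$-almost everywhere, while $f_{x,y}(y) - f_{x,y}(x) = \di(x,y)$. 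This in fact yields the uniform comparison $\di_\Ks \geq \di$ as a bonus, which gives strict positivity at distinct points.

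For finiteness I will argue in the reverse direction: any admissible $f$ satisfies $\Lip[f] \leq 1/\sqrt{\tilde C}$ $\m$-almost everywhere, and by Theorem \ref{th:Cheeger} this is equivalent to $|Df|_C \leq 1/\sqrt{\tilde C}$ $\m$-almost everywhere. Thus the continuous function $\sqrt{\tilde C}\, f$ lies in the class defining $\di_\Ch$, and Proposition \ref{prop:equivCh} then delivers $|f(y) - f(x)| \leq (2C/\sqrt{\tilde C})\, \di(x,y)$. Taking the supremum over admissible $f$ produces $\di_\Ks(x,y) \leq (2C/\sqrt{\tilde C})\, \di(x,y) < \infty$. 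The only mildly delicate point I expect to encounter is precisely this passage from an $\m$-a.e. bound on $\Lip[f]$ to a genuine global Lipschitz-type estimate on $f$, for which the cleanest route is to invoke the equivalence $\di_\Ch \simeq \di$ from Section \ref{sec:diCh} rather than attempting a more direct length-space argument.
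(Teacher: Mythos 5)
Your proof is correct and follows essentially the same route as the paper: positivity via a $1$-Lipschitz distance-type competitor together with Corollary \ref{cor:convKS}, the triangle inequality by splitting a near-optimal competitor, and finiteness from the bound $\sqrt{\tilde C}\,\Lip[f]\leq \ks[f]$ combined with the comparison with $\di_\Ch$, which is precisely the argument the paper spells out in Proposition \ref{prop:equivd-dks}. The only small point to adjust is that, since membership in $H_{1,2}(\X,\di,\m)$ is not automatic for a Lipschitz function, you should feed $\sqrt{\tilde C}\,f$ into the Lipschitz formulation \eqref{eq:diChwithLip} of $\di_\Ch$ (Proposition \ref{prop:dChLip}) rather than into \eqref{eq:defdiCh}, as you in fact indicate at the end of your argument.
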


\begin{proof}
 The finiteness and the symmetry of $\di_\Ks$ follow directly from its definition together with Corollary \ref{cor:convKS}. Moreover, Corollary \ref{cor:convKS} ensures also that for every $z\in \X$ the function $g(x)= \di(x,z)$ satisfies $\ks[g]\leq 1$ $\m$-almost everywhere, which in particular shows that $\di_\Ks(x,y)=0$ if and only if $x=y$. Let us now prove the triangular inequality: fix $x,y,z\in \X$ and $\varepsilon>0$, then take $f\in \Lipfunc(\X)$ such that $\ks[f]\leq 1$ $\m$-almost everywhere and 
\begin{equation*}
    f(z)- f(x) \geq \di_\Ks(x,z) -\varepsilon.
\end{equation*}
Then we can observe that
\begin{equation*}
    \di_\Ks(x,z) \leq f(z)- f(x) + \varepsilon = [f(z)- f(y)] + [f(y)-f(x)] + \varepsilon \leq \di_\Ks(x,y) + \di_\Ks(y,z)+ \varepsilon,
\end{equation*}
since $\varepsilon$ is arbitrary, we get the conclusion.
\end{proof}

\noindent We are now going to prove that the distances $\di$ and $\di_\Ks$ are equivalent.
Having introduced $\di_\Ks$ working only on Lipschitz functions, Proposition \ref{prop:equivCh} will play a huge role in the proof.

\begin{prop}\label{prop:equivd-dks}
The distance $\di_\Ks$ is equivalent to $\di$.
\end{prop}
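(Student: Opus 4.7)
The plan is to sandwich $\di_\Ks$ between two multiples of $\di_\Ch$ and then use Proposition \ref{prop:equivCh} to convert this into an equivalence with $\di$. The key point is that Corollary \ref{cor:convKS} already establishes the pointwise equivalence $\sqrt{\tilde C}\,\Lip[f] \le \ks[f] \le \Lip[f]$ $\m$-almost everywhere for every $f \in \Lipfunc(\X)$, so the two sets of admissible potentials appearing in \eqref{eq:diChwithLip} and \eqref{eq:defdks} are comparable in a controlled way.

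First I would prove the inequality $\di_\Ch \le \di_\Ks$. Given any $f \in \Lipfunc(\X)$ admissible for $\di_\Ch$, i.e.\ with $\Lip[f] \le 1$ $\m$-a.e., the upper bound in Corollary \ref{cor:convKS} immediately gives $\ks[f] \le 1$ $\m$-a.e., so $f$ is also admissible in the supremum defining $\di_\Ks$. Taking the supremum yields $\di_\Ch(x,y) \le \di_\Ks(x,y)$ for every $x,y \in \X$.

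For the reverse inequality $\di_\Ks \le \di_\Ch/\sqrt{\tilde C}$, I would argue symmetrically. If $f \in \Lipfunc(\X)$ satisfies $\ks[f] \le 1$ $\m$-a.e., then the lower bound in Corollary \ref{cor:convKS} gives $\sqrt{\tilde C}\,\Lip[f] \le 1$ $\m$-a.e. Rescaling, the function $g := \sqrt{\tilde C}\, f$ is Lipschitz with $\Lip[g] \le 1$ $\m$-a.e., hence admissible for $\di_\Ch$, so $g(y)-g(x) \le \di_\Ch(x,y)$, which rewrites as $f(y)-f(x) \le \di_\Ch(x,y)/\sqrt{\tilde C}$. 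Taking the supremum over such $f$ gives the claim.

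Chaining these two bounds with Proposition \ref{prop:equivCh}, which gives $\di \le \di_\Ch \le 2C\,\di$, I would conclude
\begin{equation*}
\di \;\le\; \di_\Ch \;\le\; \di_\Ks \;\le\; \frac{\di_\Ch}{\sqrt{\tilde C}} \;\le\; \frac{2C}{\sqrt{\tilde C}}\,\di,
\end{equation*}
which is exactly the desired equivalence. I do not anticipate a real obstacle here: the hard analytic work has already been absorbed into the pointwise comparison of $\ks[f]$ and $\Lip[f]$ on locally Minkowski spaces (Corollary \ref{cor:convKS}) and into Ohta's argument reproduced in Proposition \ref{prop:equivCh}, so the proof at this stage reduces to a rescaling of admissible test functions.
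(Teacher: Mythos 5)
Your proof is correct and takes essentially the same route as the paper's: both use Proposition \ref{prop:dChLip} together with the pointwise comparison of $\ks[f]$ and $\Lip[f]$ from Corollary \ref{cor:convKS} to rescale admissible test functions, proving $\di_\Ch\simeq\di_\Ks$, and then invoke Proposition \ref{prop:equivCh}. The only (harmless) difference is the constant: you use the sharper $\sqrt{\tilde C}$ coming directly from the corollary, while the paper writes $\tilde C$, which also works since $\tilde C\leq 1$.
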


\begin{proof}
We start observing that Proposition \ref{prop:equivCh} ensures that $\di_\Ch$ is equivalent to $\di$, so the thesis follows if we prove that $\di_\Ks$ is equivalent to $\di_\Ch$. However, this equivalence is an almost immediate consequence of Proposition \ref{prop:dChLip} and Corollary \ref{cor:convKS}. In fact, let $f \in \Lipfunc(\X)$ be a competitor for the definition of $\di_\Ks$ \eqref{eq:defdks}, that is $\ks[f]\leq 1$ $\m$-almost everywhere. Then the function $\tilde C \cdot f \in \Lipfunc(\X)$ is a competitor for the definition of $\di_\Ch$ \eqref{eq:diChwithLip}, indeed
\begin{equation*}
    \Lip[\tilde C \cdot f] = \tilde C \Lip[f] \leq \ks[f]\leq 1, \quad \m\text{-almost everywhere}.
\end{equation*}
Thus, given any pair $x,y\in \X$, it holds that 
\begin{equation*}
    \di_\Ks(x,y)= \sup_f |f(y)-f(x)| = \frac 1{\tilde C} \sup_f  \big|\tilde C \cdot f (y)- \tilde C \cdot f (x)\big| \leq \frac{1}{\tilde C} \di_\Ch(x,y)
\end{equation*}
where the supremums are taken among the functions $f\in \Lipfunc(\X)$ such that $\ks[f]\leq 1$ $\m$-almost everywhere. Analogously we can prove that $\di_\Ch \leq \di_\Ks$.
\end{proof}

The aim of the next few statement is to prove that $(\X,\di_\Ks)$ is a complete and separable length metric space, notice that completeness and separability are necessary to make $(\X,\di_\Ks,\m)$ a metric measure space (according to Definition \ref{def:mms}). We start with a preliminary Lemma that highlights a nice locality property of the Korevaar-Schoen potentials, this result will be useful many times in the reminder of the section. We point out that also in this proof we take advantage of the convergence of the Korevaar-Shoen potentials (Proposition \ref{prop:convofpotential}).

\begin{lemma}\label{lem:ksofmax}
   Let $f_1, f_2 \in \Lipfunc(\X)$ be such that $\ks[f_1], \ks[f_2]\leq 1 $ $\m$-almost everywhere, then the Lipschitz functions $f=\max \{ f_1,f_2 \}$ and $g= \min \{ f_1,f_2 \}$ satisfy the same property, that is $\ks[f]\leq 1 $ $\m$-almost everywhere and $\ks[g]\leq 1 $ $\m$-almost everywhere.
\end{lemma}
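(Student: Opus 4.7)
The plan is to prove pointwise that $\ks[f](x) \leq 1$ for $\m$-a.e.\ $x$, where $f = \max\{f_1, f_2\}$, by decomposing $\X$ into the three disjoint sets $A_1 = \{f_1 > f_2\}$, $A_2 = \{f_1 < f_2\}$, and $E = \{f_1 = f_2\}$. The assertion for $g = \min\{f_1, f_2\}$ will then follow immediately from the identity $\min\{f_1, f_2\} = -\max\{-f_1, -f_2\}$ together with the trivial observation that $\ks_r[-h] = \ks_r[h]$ (since the potential only involves $|h(y) - h(x)|$).

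On the open set $A_1$, given $x \in A_1$ pick $\delta > 0$ with $f_1 > f_2$ throughout $B_\delta(x)$; then $f \equiv f_1$ on $B_\delta(x)$ and $f(x) = f_1(x)$, so for every $r < \delta$ we have the exact equality $\ks_r[f](x) = \ks_r[f_1](x)$. Letting $r \to 0$ and invoking Proposition~\ref{prop:convofpotential} (applied both to $f$ and to $f_1$, where convergence holds), we obtain $\ks[f](x) = \ks[f_1](x) \leq 1$. The case $x \in A_2$ is handled identically with $f_2$ in place of $f_1$.

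The main step is the case $x \in E$, and I would expect this to be where care is needed. By the Lebesgue differentiation theorem applied to $\mathbf{1}_E$, $\m$-a.e. point of $E$ is a density point; fix such a point $x$ at which Proposition~\ref{prop:convofpotential} also applies to $f$ and to $f_1$. For $y \in B_r(x) \cap E$ we have $f(y) = f_1(y) = f_2(y)$ and $f(x) = f_1(x)$, so $|f(y) - f(x)| = |f_1(y) - f_1(x)|$; for $y \in B_r(x) \setminus E$ we use only the crude Lipschitz bound $|f(y) - f(x)| \leq \Lipglob[f] \cdot r$. Splitting the integral accordingly,
\begin{equation*}
\ks^2_r[f](x) \;\leq\; \frac{1}{\m(B_r(x))}\int_{B_r(x)}\frac{|f_1(y)-f_1(x)|^2}{r^2}\de\m(y) \;+\; \Lipglob[f]^2 \cdot \frac{\m(B_r(x)\setminus E)}{\m(B_r(x))}.
\end{equation*}
The first term equals $\ks^2_r[f_1](x)$, while the second tends to $0$ as $r \to 0$ because $x$ is a density point of $E$. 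Passing to the limit via Proposition~\ref{prop:convofpotential} yields $\ks^2[f](x) \leq \ks^2[f_1](x) \leq 1$, which concludes the argument.

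The only potential obstacle is the verification at points of $E$; the comfortable feature is that we need no Rademacher-type identification of the limiting differentials (Proposition~\ref{prop:differential}) since the density-point split already provides the sharp constant $1$, whereas naive bounds of the type $\max\{a,b\}^2 \leq a^2 + b^2$ would only give $\ks^2[f] \leq 2$.
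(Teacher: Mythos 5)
Your proof is correct and follows essentially the same route as the paper: the paper also works at density points of the set where $f$ coincides with one of the $f_i$ (it uses $\{f_1\ge f_2\}$ and $\{f_2\ge f_1\}$ rather than your three-set split), replaces $f$ by $f_1$ on the intersection with the ball, discards the remainder via the density-point property together with the global Lipschitz bound, and passes to the limit with Proposition~\ref{prop:convofpotential}. Your explicit handling of the negligible term and the reduction of the minimum to $-\max\{-f_1,-f_2\}$ are only cosmetic variations of the paper's ``completely analogous'' remark.
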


\begin{proof}
We prove the result only for $f$, the proof for $g$ is completely analogous. The function $f$ is clearly Lipschitz being the maximum of two Lipschitz functions. Moreover observe that, since it obviously holds that
\begin{equation*}
    \X= \{ f_1 \geq f_2 \} \cup \{ f_2 \geq f_1 \}
\end{equation*}
it is sufficient to prove that $\ks[f]\leq 1$ for $\m$-almost every $x\in \{ f_1 \geq f_2 \}$ (the same would hold also for $\{ f_2 \geq f_1 \}$ by symmetry). Notice that $\m$-almost every $x\in \{ f_1 \geq f_2 \}$ is a density point of $\{ f_1 \geq f_2 \}$ and it is such that 
\begin{equation*}
    \ks[f](x)=\lim_{r\to 0^+} \ks_r[f](x) \qquad \text{and} \qquad 1 \geq \ks[f_1](x)=\lim_{r\to 0^+} \ks_r[f_1](x),
\end{equation*}
according to Proposition \ref{prop:convofpotential}. In particular we can deduce that 
\begin{align*}
    \ks^2[f](x)&=\lim_{r\to 0^+} \fint_{B_{r}(x)} \frac{|f(y)- f(x)|^2}{r^{2}} \de \m(y)\\
    &=\lim_{r\to 0^+} \frac{1}{\m(B_r(x))} \left[ \int_{B_{r}(x)\cap \{ f_1 \geq f_2 \} } \frac{|f(y)- f(x)|^2}{r^{2}} \de \m(y) + \int_{B_{r}(x)\setminus \{ f_1 \geq f_2 \} } \frac{|f(y)- f(x)|^2}{r^{2}} \de \m(y)  \right] \\
     &=\lim_{r\to 0^+} \frac{1}{\m(B_r(x))} \left[ \int_{B_{r}(x)\cap \{ f_1 \geq f_2 \} } \frac{|f_1(y)- f_1(x)|^2}{r^{2}} \de \m(y) + \int_{B_{r}(x)\setminus \{ f_1 \geq f_2 \} } \frac{|f(y)- f(x)|^2}{r^{2}} \de \m(y)  \right]\\
     &=  \lim_{r\to 0^+} \fint_{B_{r}(x)} \frac{|f_1(y)- f_1(x)|^2}{r^{2}} \de \m(y) = \ks^2[f_1](x) \leq 1.
\end{align*}
This concludes the proof.
\end{proof}

\noindent The next proposition shows that a function of the type $\di_\Ks(\bar z, \cdot)$ is (a posteriori) admissible in the maximization \eqref{eq:defdks}, making the supremum a maximum. Anyway, this is not the application of the result we are interested in, we are going to use it as a building block of some proofs in the following.  

\begin{prop}\label{prop:ksleq1}
 For every $\bar z \in \X$ the function $\rho_{\bar z}=\di_\Ks(\bar z, \cdot)$ is Lipschitz and $\ks[\rho_{\bar z}]\leq 1$ $\m$-almost everywhere.
\end{prop}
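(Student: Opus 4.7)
The plan is to first check that $\rho_{\bar z}$ is Lipschitz, and then to approximate it in $L^2(\X,\m)$ by a sequence of Lipschitz functions $F_N$ satisfying $\ks[F_N] \leq 1$ $\m$-almost everywhere, so that Lemma \ref{lem:mazur} applied with the constant sequence $g_i \equiv 1$ gives the desired bound $\ks[\rho_{\bar z}] \leq 1$ $\m$-almost everywhere. The Lipschitz regularity of $\rho_{\bar z}$ is immediate from its definition: the triangle inequality makes it $1$-Lipschitz with respect to $\di_\Ks$, and Proposition \ref{prop:equivd-dks} transfers this to Lipschitz continuity with respect to $\di$.

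The approximants $F_N$ are built by a separability argument. Fix a countable dense set $\{x_k\}_{k \in \N} \subset \X$. For each pair $(k,n) \in \N \times \N$, the definition \eqref{eq:defdks} of $\di_\Ks$ provides a Lipschitz function $f_{k,n}$ with $\ks[f_{k,n}] \leq 1$ $\m$-almost everywhere and $f_{k,n}(x_k) - f_{k,n}(\bar z) \geq \rho_{\bar z}(x_k) - 1/n$. Replacing $f_{k,n}$ by $f_{k,n} - f_{k,n}(\bar z)$, which does not affect its Korevaar--Schoen potential, we may assume $f_{k,n}(\bar z) = 0$. Enumerating the resulting countable family as $\{g_j\}_{j \in \N}$, I would set $F_N := \max_{1 \leq j \leq N} g_j$. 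Iterating Lemma \ref{lem:ksofmax}, together with the fact that the maximum of finitely many Lipschitz functions is Lipschitz, guarantees that $F_N \in \Lipfunc(\X)$ and $\ks[F_N] \leq 1$ $\m$-almost everywhere.

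The next step is to verify that $F_N \to \rho_{\bar z}$ in $L^2(\X,\m)$. Since every $g_j$ is admissible in \eqref{eq:defdks} and $g_j(\bar z) = 0$, one has $g_j(x) \leq \rho_{\bar z}(x)$ for all $x \in \X$, and therefore $F_N \leq \rho_{\bar z}$ pointwise. On the other hand, for every fixed $k$ the nondecreasing sequence $F_N(x_k)$ converges to $\rho_{\bar z}(x_k)$ by construction. Since each $g_j$ is $1$-Lipschitz with respect to $\di_\Ks$, Proposition \ref{prop:equivd-dks} makes it $C$-Lipschitz with respect to $\di$ for a constant $C$ independent of $j$, and the same uniform bound is inherited by the maxima $F_N$; combined with the density of $\{x_k\}$, this equi-Lipschitz bound yields pointwise convergence $F_N \to \rho_{\bar z}$ on all of $\X$. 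Dominated convergence, using $\m(\X) < \infty$ together with the uniform Lipschitz control on $F_N$ (which vanishes at $\bar z$), then upgrades this to convergence in $L^2(\X,\m)$.

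Applying Lemma \ref{lem:mazur} with $f = \rho_{\bar z}$, $f_i = F_i$ and $g_i \equiv 1$ (so that $K = 1$) delivers $\ks[\rho_{\bar z}](x) \leq 1$ for $\m$-almost every $x \in \X$. The main structural ingredient is Lemma \ref{lem:ksofmax}, which is what keeps the approximating sequence inside the admissible class $\{\ks \leq 1\}$ after taking finite maxima; without this stability, the separability argument could not be closed. The only technical subtlety is the upgrade from pointwise to $L^2$-convergence of the $F_N$, which is a routine consequence of the uniform Lipschitz bound and the finiteness of $\m$.
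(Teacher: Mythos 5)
Your construction of the approximating sequence is essentially the one in the paper: a countable dense set, near-optimal competitors normalized to vanish at $\bar z$, finite maxima kept admissible via Lemma \ref{lem:ksofmax}, the bound $F_N\le\rho_{\bar z}$, and the uniform ($C$-)Lipschitz bound giving locally uniform convergence; your derivation of the Lipschitz property of $\rho_{\bar z}$ from $1$-Lipschitzianity with respect to $\di_\Ks$ and Proposition \ref{prop:equivd-dks} is also correct and slightly cleaner than the paper's. The genuine gap is the step ``dominated convergence \dots upgrades this to convergence in $L^2(\X,\m)$''. The standing hypotheses only give $\m(\X)<\infty$; the space may be unbounded, and then the dominating function produced by the uniform Lipschitz control is (a constant times) $\di(\bar z,\cdot)$, whose square need not be $\m$-integrable: a finite measure can have tails so heavy that $\di(\bar z,\cdot)\notin L^2(\X,\m)$ (on $\R$ take a weight comparable to $(1+|x|)^{-2}\log^{-2}(e+|x|)$; all standing assumptions hold, but $x\mapsto |x|$ is not square-integrable). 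In that case $\rho_{\bar z}$ and the $F_N$ themselves need not lie in $L^2(\X,\m)$, so there is no $L^2$-convergence to speak of, and Lemma \ref{lem:mazur} (whose proof goes through weak compactness in $H_{1,2}(\X,\di,\m)$, hence needs the functions in $L^2$) cannot be applied to $f=\rho_{\bar z}$ directly; locally uniform convergence alone does not close this.

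This is exactly the point the paper's proof handles and you skip: it multiplies both $\rho_{\bar z}$ and the maxima by a cut-off $\phi(\di(\bar z,\cdot)/R)$, using that $(\X,\di)$ is proper (local compactness from Remark \ref{rmk:deflocMink}, completeness and the length property, via Hopf--Rinow), so that the truncated functions are bounded, supported in a bounded set, equi-Lipschitz and uniformly convergent, hence convergent in $L^2(\X,\m)$ because $\m(\X)<\infty$. Lemma \ref{lem:mazur} is then applied to the truncations, and the locality of the potentials (the same computation as in Lemma \ref{lem:ksofmax}, available thanks to Proposition \ref{prop:convofpotential}) identifies $\ks$ of the truncated functions with $\ks$ of the original ones $\m$-a.e.\ on $B_R(\bar z)$; letting $R\to\infty$ concludes. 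If you insert this cut-off/localization step (or restrict to bounded $\X$, where properness gives compactness and your uniform convergence does yield $L^2$-convergence), your argument becomes the paper's proof.
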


\begin{proof}
Consider a countable dense set $\{z_i\}_{i\in \N}\subset \X$, for every $i, m \in \N$ there exists a Lipschitz function $h_{i,m}$, with $\ks[h_{i,m}]\leq 1$ $\m$-almost everywhere, such that 
\begin{equation*}
    h_{i,m}(z_i)- h_{i,m}(\bar z) \geq \di_\Ks(z_i,\bar z) -\frac 1 m .
\end{equation*}
Putting $h_m= \max\{h_{1,m}, \dots, h_{m,m}\}$, Lemma \ref{lem:ksofmax} ensures that $\ks[h_{m}]\leq 1$ $\m$-almost everywhere, moreover, it is easy to realize that the sequence $\{h_m\}_{m\in \N}$ is converging to the function $\rho_{\bar z}$ uniformly on compact sets. In particular $\rho_{\bar z}$ is a Lipschitz function. Now take a 1-Lipschitz smooth function $\phi:[0,\infty)\to [0,1]$ such that $\phi=1$ in $[0,1]$ and $\phi=0$ in $[3,\infty)$.
Notice that the metric version of the Hopf-Rinow theorem (see \cite[Theorem 2.3]{MR1377265}) ensures that $(\X,\di,\m)$ is proper (keep in mind (iv) in Remark \ref{rmk:deflocMink}). Hence for every $R>0$ the functions
\begin{equation*}
    \tilde h_n = h_n \cdot \phi(\di(\bar z,\cdot)/R),
\end{equation*}
converge uniformly to $\tilde \rho_{\bar z}= \rho_{\bar z} \cdot \phi(\di(\bar z,\cdot)/R)$ and they are also equi-Lipschitz. Therefore it is possible to apply Lemma \ref{lem:mazur} and deduce that for every $x\in B_R(\bar z)$ it holds
\begin{equation*}
    \ks[\rho_{\bar z}](x) = \ks[\tilde\rho_{\bar z}](x) \leq \limsup_{i\to \infty} \ks[\tilde h_n](x) = \limsup_{i\to \infty} \ks[ h_n](x)\leq  1.
\end{equation*}
Since this is true for every $R$, $\ks[\rho_{\bar z}]\leq 1$ $\m$-almost everywhere in $\X$.
\end{proof}

\begin{prop}
 $(\X,\di_\Ks)$ is a complete and separable length metric space.
\end{prop}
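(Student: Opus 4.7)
\begin{pr}
I would split the argument into three parts: separability, completeness, and the length property.

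The first two are an immediate consequence of the bi-Lipschitz equivalence $\di\simeq\di_\Ks$ established in Proposition \ref{prop:equivd-dks}. Since $(\X,\di)$ is complete and separable and the two distances induce the same topology on $\X$, any $\di$-dense countable subset is $\di_\Ks$-dense, and every $\di_\Ks$-Cauchy sequence is in particular $\di$-Cauchy, hence converges in $\di$ and, by equivalence, in $\di_\Ks$ to the same limit.

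For the length property I would aim to show $\di_\Ks(x,y)=\inf_\gamma L_{\di_\Ks}(\gamma)$, the infimum being taken over continuous curves from $x$ to $y$. The inequality $\di_\Ks(x,y)\leq L_{\di_\Ks}(\gamma)$ is immediate from the triangle inequality applied to arbitrary partitions of $\gamma$. The main obstacle is the reverse inequality: producing, for each $\varepsilon>0$, a curve from $x$ to $y$ whose $\di_\Ks$-length is at most $\di_\Ks(x,y)+\varepsilon$. My strategy is to exploit the locally Minkowski structure together with the explicit infinitesimal formula $\ks^2[f](x_0)=\fint_{B_1(0)}|\langle v_f,z\rangle|^2\,\de\Leb^{n(x_0)}$ provided by Corollary \ref{cor:kswithdiff}. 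In a chart at an $\m$-a.e.\ regular point, the competitor condition $\ks[f]\leq 1$ becomes a pointwise dual-norm bound $\norm{v_f}^\ast\leq 1$ on the linear differential, so that $\di_\Ks$ restricted to a chart agrees, up to $\varepsilon$-bi-Lipschitz error, with the Minkowski distance induced on $\R^{n(x_0)}$ by the dual norm $\norm{\cdot}^{\ast\ast}$. Minkowski distances being length distances, $\di_\Ks$ is therefore locally a length distance.

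To globalise, I would fix a $\di$-geodesic between $x$ and $y$ (available by Remark \ref{rmk:deflocMink}(iv) together with Hopf--Rinow), cover it by finitely many charts as in Definition \ref{def:locMink}, and in each chart concatenate approximately $\di_\Ks$-optimal curves between consecutive subdivision points using the local Minkowski length structure. Letting the mesh of the partition and the chart errors tend to zero should yield the desired bound. The hardest part, which I expect to be the real technical core, is controlling the cumulative chart distortion uniformly along the geodesic so that the total error vanishes in the limit; this should be handled through the uniform bi-Lipschitz and measure-comparison estimates built into Definition \ref{def:locMink}, combined with the pointwise control on $\ks[\rho_x]$ from Proposition \ref{prop:ksleq1}.
\end{pr}
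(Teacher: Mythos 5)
Your treatment of completeness and separability is exactly the paper's: both follow at once from the equivalence $\di\simeq\di_\Ks$ of Proposition \ref{prop:equivd-dks}. The problem is the length property, where your chart-based strategy has genuine gaps and is not the route the paper takes. First, the locally Minkowski structure is only available at $\m$-almost every point, and the scales $r(\varepsilon)$ at which the charts $i^{x,r}$ are $\varepsilon$-bi-Lipschitz depend on the point with no uniformity whatsoever; a fixed $\di$-geodesic from $x$ to $y$ is $\m$-negligible, so its points need not be regular at all, and even if they were you could not control the ``cumulative chart distortion'' along it — this is precisely the step you defer, and it is not clear it can be carried out. Second, and more seriously, the identification of $\di_\Ks$ inside a chart with the Minkowski distance of a dual norm is not available at this stage: Corollary \ref{cor:kswithdiff} only gives, for each \emph{fixed} Lipschitz $f$, a formula for $\ks[f](x)$ at $\m$-a.e.\ $x$ (with exceptional set depending on $f$), while turning the constraint $\ks[f]\le 1$ into a pointwise bound on the differential uniformly over all competitors, and then back into a statement about $\di_\Ks$, is essentially the content of the final subsections of the paper (the inequality relating $\ks[f]$ and $\Lip_{\di_\Ks}[f]$). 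Using it here would be circular, or at least would require proving the main theorem first.

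The paper avoids all of this with a short self-contained argument: since $(\X,\di_\Ks)$ is complete, it suffices to produce, for every $x,y$ and $\varepsilon>0$, an $\varepsilon$-midpoint $z$ with $\max\{\di_\Ks(x,z),\di_\Ks(y,z)\}\le\tfrac12\di_\Ks(x,y)+\varepsilon$. Arguing by contradiction, if no such $z$ exists there is $r>\tfrac12\di_\Ks(x,y)$ for which the two $\di_\Ks$-balls $B_r(x)$ and $B_r(y)$ are disjoint; the functions $\rho_{x,r}=(r-\di_\Ks(x,\cdot))_+$ and $\rho_{y,r}=(r-\di_\Ks(y,\cdot))_+$ are Lipschitz with $\ks\le 1$ $\m$-a.e.\ by Proposition \ref{prop:ksleq1} and Lemma \ref{lem:ksofmax}, so $f=\rho_{x,r}-\rho_{y,r}$ is admissible in \eqref{eq:defdks} and yields $\di_\Ks(x,y)\ge f(x)-f(y)=2r>\di_\Ks(x,y)$, a contradiction. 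If you want to salvage your approach you would need uniform control of the charts along curves, which the locally Minkowski hypothesis does not provide; the midpoint argument is the intended mechanism here.
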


\begin{proof}
Completeness and separability are immediate consequences of Proposition \ref{prop:equivd-dks}, since $(\X,\di)$ is a complete and separable metric space. Therefore, in order to prove that $\di_\Ks$ is a length metric, it is sufficient to show (see \cite{MR1377265}) the existence of an $\varepsilon$-midpoint for every $x,y\in\X$ and every $\varepsilon>0$, that is $z\in \X$ such that
\begin{equation*}
    \max\{ \di_\Ks(x,z), \di_\Ks(y,z)\} \leq \frac 12 \di_\Ks(x,y) + \varepsilon.
\end{equation*}
Assume by contradiction that this is not the case for some $x,y\in\X$, then there exists $r>\frac{1}{2}\di_\Ks(x,y)$ such that the balls $B_r(x)$ and $B_r(y)$ are disjoint. Define the functions
\begin{equation*}
    \rho_{x,r}(z) = (r- \di_\Ks(x,z))_+ \qquad \text{and} \qquad  \rho_{y,r}(z) = (r- \di_\Ks(y,z))_+ 
\end{equation*}
and notice that, as a consequence of Lemma \ref{lem:ksofmax} and Proposition \ref{prop:ksleq1}, they are Lipschitz and $\ks[\rho_{x,r}]\leq 1$, $\ks[\rho_{x,r}]\leq 1$ $\m$-almost everywhere. As a consequence the function $f= \rho_{x,r}- \rho_{y,r}$ is admissible in \eqref{eq:defdks}, thus
\begin{equation*}
    \di_\Ks(x,y) \geq f(x)- f(y) = \rho_{x,r}(x)- \rho_{y,r}(y) = 2r > \di_\Ks(x,y)
\end{equation*}
which gives the desired contradiction. 
\end{proof}

As already mentioned before, the aim of this section is to prove that the Cheeger energy $\Ch_\Ks$ is equal to the Korevaar-Schoen energy $\tilde \Ks$. The following lemma provides us the strategy to do it, which will be developed in the next two subsections.

\begin{lemma}
   In order to prove $\Ch_\Ks=\tilde \Ks$ it is sufficient to prove that for every Lipschitz function $f \in \Lipfunc(\X)$ it holds
   \begin{equation}\label{eq:lip=ks}
        \ks[f](x) = \Lip_{\di_\Ks}[f] (x) \qquad \text{ for $\m$-almost every }x\in \X.
   \end{equation}
\end{lemma}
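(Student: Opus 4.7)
My plan is to exploit the characterization of the Cheeger energy as an $L^2$-relaxation of the squared pointwise Lipschitz functional, as given by Proposition \ref{prop:Chasrelax}. Applied to the metric measure space $(\X, \di_\Ks, \m)$ --- which is indeed a metric measure space in the sense of Definition \ref{def:mms}, and whose class of Lipschitz functions coincides as a set with $\Lipfunc(\X)$ since $\di \simeq \di_\Ks$ by Proposition \ref{prop:equivd-dks} --- that proposition yields
\begin{equation*}
    \Ch_\Ks[u] = \inf_{\Lipfunc(\X) \supset (f_n) \to u} \liminf_{n \to \infty} \int \Lip_{\di_\Ks}[f_n]^2 \de \m,
\end{equation*}
with convergence taken in $L^2(\X, \m)$, which is unaffected by the choice of distance since the reference measure is unchanged.

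On the other hand, the Korevaar-Schoen energy $\tilde \Ks$ is by definition the $L^2$-relaxation of $\Ks$ along $\Lipfunc(\X)$, and Corollary \ref{cor:convKS} provides the explicit representation $\Ks[f] = \int \ks^2[f] \de \m$ for every Lipschitz function $f$. Therefore
\begin{equation*}
    \tilde\Ks[u] = \inf_{\Lipfunc(\X) \supset (f_n) \to u} \liminf_{n \to \infty} \int \ks^2[f_n] \de \m.
\end{equation*}

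The two formulas exhibit the same $L^2(\X, \m)$-relaxation structure, over the same class of competitors $\Lipfunc(\X)$ and with respect to the same topology. Consequently, if the two integrands agree, i.e.\ if $\ks[f](x) = \Lip_{\di_\Ks}[f](x)$ holds $\m$-a.e.\ for every $f \in \Lipfunc(\X)$ --- which is exactly \eqref{eq:lip=ks} --- then the two functionals being relaxed coincide, and so do their relaxations $\Ch_\Ks$ and $\tilde\Ks$ on $L^2(\X, \m)$.

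Since this reduction is purely formal once the two relaxation representations are in place, there is no genuine obstacle in proving the present lemma; the real difficulty is of course concentrated in establishing the pointwise identity \eqref{eq:lip=ks} itself, which will have to be addressed in the subsequent subsections via the Rademacher-type Proposition \ref{prop:differential}, the explicit formula of Corollary \ref{cor:kswithdiff}, and a fine comparison between the infinitesimal behaviour of $f$ with respect to $\di$ and with respect to $\di_\Ks$.
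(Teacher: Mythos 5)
Your argument is correct and is essentially the paper's own proof: both reduce the claim to the fact that $\Ch_\Ks$ and $\tilde\Ks$ are $L^2(\X,\m)$-relaxations, over the same class $\Lipfunc(\X)$ (unchanged since $\di\simeq\di_\Ks$ by Proposition \ref{prop:equivd-dks}), of the functionals $\int \Lip_{\di_\Ks}[f]^2\de\m$ (via Proposition \ref{prop:Chasrelax}) and $\int \ks^2[f]\de\m$ (via Corollary \ref{cor:convKS}), which coincide once \eqref{eq:lip=ks} holds. The only cosmetic difference is that the paper also records, via Propositions \ref{prop:equivd-dks} and \ref{prop:D+P}, that $(\X,\di_\Ks,\m)$ is doubling and supports a Poincaré inequality, which your appeal to the general statement of Proposition \ref{prop:Chasrelax} renders unnecessary.
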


\begin{proof}
First of all notice that, since $\di_\Ks \simeq \di$, the Lipschitz functions with respect to the distance $\di_\Ks$ are precisely the Lipschitz functions with respect to the distance $\di$. Moreover, Proposition \ref{prop:equivd-dks} combined with Proposition ensures that $(\X,\di_\Ks,\m)$ satisfies the doubling condition and supports a Poincaré inequality. Then, accordingly to Proposition \ref{prop:Chasrelax}, $\Ch_\Ks$ is the relaxation of 
\begin{equation*}
    \begin{cases}
    \int \Lip_{\di_\Ks}[f]^2 \de \m &\text{if }f\in \Lipfunc(\X)\\
    +\infty & \text{if }f\in L^2(\X,\m) \setminus \Lipfunc(\X)
    \end{cases}.
\end{equation*}
On the other hand, by definition $\tilde \Ks$ is the relaxation of \begin{equation*}
    \begin{cases}
    \int \ks^2[f] \de \m &\text{if }f\in \Lipfunc(\X)\\
    +\infty & \text{if }f\in L^2(\X,\m) \setminus \Lipfunc(\X)
    \end{cases},
\end{equation*}
thus, if \eqref{eq:lip=ks} holds, $\Ch_\Ks$ and $\tilde \Ks$ are the relaxation of the same functional and then they are equal.
\end{proof}

\begin{remark}
Another possible strategy to prove the existence of an equivalent distance which provides an infinitesimally Hilbertian structure on $(\X,\di,\m)$ could be to consider the intrinsic distance $\di_{\tilde \Ks}$ associated to the energy $\tilde \Ks$ (in the sense of Dirichlet forms, see \cite{St95}) and prove that $\Ch_{\di_{\tilde \Ks}}=\tilde \Ks$. A reasonable way to show this last equivalence would be to prove that the energy $\tilde \Ks$ is upper regular (cfr. \cite[Definition 3.13]{MR3298475}) and apply Theorem 3.14 in \cite{MR3298475}. However, proving the upper regularity of the Korevaar-Schoen energy in our setting seemed quite challenging to us, especially in comparison to our strategy. 
\end{remark}

\subsection{Proof of the inequality $\ks[f]\leq \Lip_{\di_\Ks}[f]$}

In this short subsection we prove one of the two inequalities needed to conclude our main result. Let us point out that for this inequality it is not necessary to assume the reference metric measure space $(\X,\di,\m)$ to be locally Minkowski. In particular, our proof follows the strategy already developed by Cheeger in \cite[Section 12]{MR1708448}, with some minor changes which were necessary for it to be applied in our setting. 
Before going to the proof of the inequality, we state a preliminary approximation lemma that was proven by Cheeger in \cite[Theorem 6.5]{MR1708448}.

\begin{lemma}\label{lem:approxch}
    Let $(\X,\mathtt{d},\m)$ be a metric measure space satisfying the doubling condition, for some $\bar z \in \X$ let $f:B_R(\bar z)\to \R$ be a Lipschitz function. Then there exist a sequence of Lipschitz functions $f_i:B_R(\bar z)\to \R$ and, for each $i$, a collection of pointed closed sets $z_{i,l}\in C_{i,l}\subset B_R(\bar z)$, with associated constants $c_{i,l}\geq 0$, such that
    \begin{enumerate}
        \item $f_i \to f$ uniformly and $\Lip_{\mathtt d}[f_i] \to \Lip_{\mathtt d}[f]$ in $L^2(\X,\m)$
        \item $f_i|_{C_{i,l}}= c_{i,l} \cdot \mathtt d(z_{i,l},\cdot)$
        \item $\lim_{i \to \infty} \m(B_R(\bar z)\setminus \cup_l C_{i,l}) = 0 $.
    \end{enumerate}
\end{lemma}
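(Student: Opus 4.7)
The plan is to adapt Cheeger's argument in \cite[Theorem 6.5]{MR1708448} to our setting. The main idea is to replace $f$, on a Vitali-type disjoint cover of $B_R(\bar z)$ by balls of vanishing radius, by metric ``cones'' of the form $c\cdot\mathtt{d}(z,\cdot)$, which here play the role of affine pieces. After an additive shift we may assume $f>0$ on $B_R(\bar z)$, which changes neither $\Lip_{\mathtt d}[f]$ nor the Lipschitz class.

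The first step will be a pointwise cone selection: for $\m$-almost every $x\in B_R(\bar z)$ and every $\varepsilon>0$ I will produce arbitrarily small radii $r>0$, a constant $c>0$ with $|c-\Lip_{\mathtt{d}}[f](x)|<\varepsilon$, and a point $z\in\X$ such that $c\cdot\mathtt{d}(z,x)=f(x)$; the last requirement is trivially achievable once $f>0$, by choosing any $z$ at distance $f(x)/c$ from $x$. The second step will apply Vitali's covering theorem, available in the doubling setting, to produce for each $i\in\N$ a countable disjoint family of closed balls $\{C_{i,l}=\bar B_{\rho_{i,l}}(x_{i,l})\}_l$ with $\rho_{i,l}<1/i$, quantitatively separated from each other by taking the $C_{i,l}$ as suitable concentric contractions of a Vitali family, such that $\m(B_R(\bar z)\setminus\bigcup_l C_{i,l})<1/i$, and equipped at each centre $x_{i,l}$ with cone data $(c_{i,l},z_{i,l})$ from the first step with $\varepsilon=1/i$. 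Setting $f_i|_{C_{i,l}}:=c_{i,l}\cdot\mathtt{d}(z_{i,l},\cdot)$ then realises properties (2) and (3), and $f_i(x_{i,l})=f(x_{i,l})$ by the choice of $z_{i,l}$.

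To extend $f_i$ to all of $B_R(\bar z)$, I will use the quantitative separation between distinct $C_{i,l}$ together with the Lipschitz bound on $f$ at the centres to show that $f_i|_{\bigcup_l C_{i,l}}$ is globally $L$-Lipschitz, where $L$ is a fixed multiple of $\Lipglob[f]+1$, and then apply McShane's extension. Uniform convergence $f_i\to f$ then follows from $f_i(x_{i,l})=f(x_{i,l})$, the equi-Lipschitz bound $L$, and $\diam(C_{i,l})\to 0$. For the $L^2$-convergence $\Lip_{\mathtt{d}}[f_i]\to\Lip_{\mathtt{d}}[f]$, I will use that $\Lip_{\mathtt{d}}[f_i]\equiv c_{i,l}$ on the interior of $C_{i,l}$, the estimate $|c_{i,l}-\Lip_{\mathtt{d}}[f](x_{i,l})|<1/i$, and the Lebesgue differentiation theorem applied to the bounded measurable function $\Lip_{\mathtt{d}}[f]$ to deduce $\m$-a.e.\ convergence $c_{i,l(i,x)}\to\Lip_{\mathtt{d}}[f](x)$, where $l(i,x)$ is the index of the cell containing $x$. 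Dominated convergence with the bound $L\,\mathbf{1}_{B_R(\bar z)}$ then closes the argument.

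The most delicate step is expected to be this last coherent matching of cone slopes with $\Lip_{\mathtt{d}}[f]$ on a full-measure set: it depends on the freedom to refine the covering arbitrarily, granted by \eqref{eq:pointlip}, and on the doubling-based quantitative control over the Vitali geometry. The other non-trivial step is the verification that the concatenated cone data is globally Lipschitz, which relies on choosing the $C_{i,l}$ as proper contractions of the Vitali family so that distinct cones cannot be arbitrarily close to each other.
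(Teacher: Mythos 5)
Your proposal has genuine gaps, both at the level of the statement and in the two steps you yourself single out as delicate. (Note first that the paper offers no proof of this lemma: it is quoted from \cite[Theorem 6.5]{MR1708448}, so the comparison is with Cheeger's argument rather than with anything internal to the paper.) As literally written, with $c_{i,l}\ge 0$, no additive constant, and $z_{i,l}\in C_{i,l}$, property (2) forces $f_i$ to vanish at the point $z_{i,l}$ of every nonempty cell, so by (1) and (3) the statement fails already for $f\equiv 1$; the usable form — and the only form the paper's later argument needs, since $\ks$ and $\Lip$ are insensitive to additive constants — is $f_i|_{C_{i,l}}=a_{i,l}+c_{i,l}\,\mathtt d(z_{i,l},\cdot)$. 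Your opening reduction does not deal with this: adding a constant to $f$ does not commute with conclusion (2), so proving the lemma for $f+C>0$ does not prove it for $f$; moreover the points $z$ you select, at distance $f(x)/c$ from the centre of a cell of radius less than $1/i$, are in general nowhere near $C_{i,l}$, so requirement (2) in its stated pointed form is not met by your construction either.

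The more serious gap is the equi-Lipschitz gluing. Your cone matches $f$ only in value at the centre: since $z$ is an \emph{arbitrary} point at distance $f(x)/c$, the cone generically disagrees with $f$ at first order (on $\R$ with $f(t)=t+10$, taking $c=1$ and $z$ on the right of $x$ gives a cone of slope $-1$ where $f$ has slope $+1$), so at the boundary of a cell of radius $\rho$ the deviation from $f$ is of size $\Lipglob[f]\,\rho$, and two cells with ``opposite'' choices of $z$ whose gap is much smaller than $\rho_l+\rho_{l'}$ produce difference quotients of order $(\rho_l+\rho_{l'})/\mathrm{gap}$, which is unbounded. The remedy you propose — contracting the Vitali balls to make gaps comparable to the radii — is incompatible with (3): already in $(\R^n,\Leb^n)$, if the $(1+\theta)$-dilates of the pairwise disjoint cells remain disjoint, the cells cover at most (essentially) a $(1+\theta)^{-n}$ fraction of $B_R(\bar z)$, bounded away from $1$ independently of $i$, so you cannot have both the separation and $\m(B_R(\bar z)\setminus\bigcup_l C_{i,l})\to 0$. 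Without the uniform constant $L$, the McShane step still gives some $i$-dependent Lipschitz constant (finitely many compact cells per $i$), but the dominated-convergence argument for $\Lip_{\mathtt d}[f_i]\to\Lip_{\mathtt d}[f]$ in $L^2$ collapses: there is no control of $\Lip_{\mathtt d}[f_i]$ on the uncovered set and near the cell boundaries. Choosing $z$ so that the cone approximates $f$ to \emph{first} order on the whole cell, uniformly on a large set (an Egorov-type selection), is precisely the nontrivial content of Cheeger's theorem, and it is absent from your scheme. Two further points: the estimate $|c_{i,l}-\Lip_{\mathtt d}[f](x_{i,l})|<1/i$ does not yield $c_{i,l(i,x)}\to\Lip_{\mathtt d}[f](x)$ $\m$-a.e., because Lebesgue differentiation controls ball averages of the merely measurable function $\Lip_{\mathtt d}[f]$, not its values at nearby off-centre points (one needs approximate continuity/Lusin sets, or slopes defined through averages); and $\Lip_{\mathtt d}[f_i]\equiv c_{i,l}$ on the interior of $C_{i,l}$ presupposes $\Lip_{\mathtt d}[\mathtt d(z,\cdot)]\equiv 1$, which does not follow from the doubling condition alone (it holds in the length setting in which the paper applies the lemma, but you do not justify it).
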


\begin{prop}
 For every Lipschitz function $f \in \Lipfunc(\X)$, it holds that $\ks[f]\leq \Lip_{\di_\Ks}[f]$ $\m$-almost everywhere.
\end{prop}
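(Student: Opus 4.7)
The strategy is to approximate $f$ by Lipschitz functions built from pieces of the form $c\cdot\di_\Ks(z,\cdot)$, for which Proposition \ref{prop:ksleq1} together with Corollary \ref{cor:convKS} already provide the desired bound on the Korevaar-Schoen potential, and then pass to the limit via Lemma \ref{lem:mazur}. First I would observe that $(\X,\di_\Ks,\m)$ is a length space satisfying the doubling condition and the Poincar\'e inequality (Proposition \ref{prop:D+P} combined with Proposition \ref{prop:equivd-dks}), and that $f$ is Lipschitz with respect to $\di_\Ks$ since $\di\simeq\di_\Ks$. Thus, for each ball $B_R(\bar z)$, Lemma \ref{lem:approxch} applied to the space $(\X,\di_\Ks,\m)$ yields a sequence $f_i$ with $f_i\to f$ uniformly, $\Lip_{\di_\Ks}[f_i]\to\Lip_{\di_\Ks}[f]$ in $L^2$, and pointed closed sets $z_{i,l}\in C_{i,l}\subset B_R(\bar z)$ with constants $c_{i,l}\geq 0$ such that $f_i|_{C_{i,l}}= c_{i,l}\cdot\di_\Ks(z_{i,l},\cdot)$ and $\m(B_R(\bar z)\setminus\bigcup_l C_{i,l})\to 0$.

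The second step is to show that $\ks[f_i](x)\leq\Lip_{\di_\Ks}[f_i](x)$ for $\m$-a.e. $x\in\bigcup_l C_{i,l}$. On $C_{i,l}$ the function $f_i$ coincides with $g_{i,l}:=c_{i,l}\cdot\di_\Ks(z_{i,l},\cdot)$, and Proposition \ref{prop:ksleq1} gives $\ks[g_{i,l}]\leq c_{i,l}$ $\m$-a.e. Arguing as in Lemma \ref{lem:ksofmax}, at any density point $x\in C_{i,l}$ the contribution of $B_r(x)\setminus C_{i,l}$ to the integral defining $\ks_r^2[f_i](x)$ is negligible as $r\to 0$ (here I use the uniform bound on $\Lipglob[f_i]$ provided by Remark \ref{rmk:unifboundks} to control the integrand, together with Proposition \ref{prop:convofpotential}), so $\ks[f_i](x)=\ks[g_{i,l}](x)\leq c_{i,l}$. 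On the other hand, since $(\X,\di_\Ks,\m)$ is a length space satisfying doubling and Poincar\'e, Proposition \ref{prop:pointlip} implies $\Lip_{\di_\Ks}[g_{i,l}](x)=c_{i,l}$ at $\m$-a.e. $x$ (realizing the limsup by moving towards $z_{i,l}$), so a fortiori $\Lip_{\di_\Ks}[f_i](x)\geq c_{i,l}$ at $\m$-a.e. density point of $C_{i,l}$. Combining the two inequalities gives $\ks[f_i]\leq\Lip_{\di_\Ks}[f_i]$ $\m$-a.e. on $\bigcup_l C_{i,l}$.

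Finally, I would set $h_i:=\Lip_{\di_\Ks}[f_i]$ on $\bigcup_l C_{i,l}$ and extend $h_i$ by a uniform upper bound $K\geq\sup_i\Lipglob[f_i]$ on $B_R(\bar z)\setminus\bigcup_l C_{i,l}$. Then $\ks[f_i]\leq h_i\leq K$ $\m$-a.e. in $B_R(\bar z)$, and $h_i\to\Lip_{\di_\Ks}[f]$ in $L^2(B_R(\bar z),\m)$ since $\m(B_R(\bar z)\setminus\bigcup_l C_{i,l})\to 0$ and $\Lip_{\di_\Ks}[f_i]\to\Lip_{\di_\Ks}[f]$ in $L^2$. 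Extracting a subsequence converging $\m$-a.e., Lemma \ref{lem:mazur} yields $\ks[f](x)\leq\limsup_i h_i(x)=\Lip_{\di_\Ks}[f](x)$ for $\m$-a.e. $x\in B_R(\bar z)$, and letting $R\to\infty$ concludes the proof.

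The main obstacle I anticipate is the locality step for $\ks$, namely the identification $\ks[f_i](x)=\ks[g_{i,l}](x)$ at density points of $C_{i,l}$: although conceptually identical to the argument used in Lemma \ref{lem:ksofmax}, one must carefully estimate the portion of $B_r(x)$ falling outside $C_{i,l}$ using the uniform Lipschitz bound on the approximating sequence. A secondary, purely technical point is ensuring a uniform pointwise upper bound on the $\Lip_{\di_\Ks}[f_i]$ so that Lemma \ref{lem:mazur} applies; this follows from the explicit form of the approximation in Lemma \ref{lem:approxch}.
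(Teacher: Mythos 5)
Your proposal follows essentially the same route as the paper: apply Lemma \ref{lem:approxch} in the space $(\X,\di_\Ks,\m)$ on a ball, combine the density-point locality of $\ks$ with Proposition \ref{prop:ksleq1} to get $\ks[f_i]\leq\Lip_{\di_\Ks}[f_i]$ $\m$-a.e.\ on $\bigcup_l C_{i,l}$, and then pass to the limit via Lemma \ref{lem:mazur} after extracting an $\m$-a.e.\ convergent subsequence, finally exhausting $\X$ by balls. The only differences are cosmetic (you spell out the extension of the comparison functions off $\bigcup_l C_{i,l}$ and the uniform bound needed for Lemma \ref{lem:mazur}, which the paper leaves implicit), so the argument is correct and matches the paper's proof.
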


\begin{proof}
Given a Lipschitz function $f\in \Lipfunc(\X)$ we fix a ball $B_R(\bar z)$ and we apply Lemma \ref{lem:approxch} to the space $(\X,\di_\Ks,\m)$, obtaining an approximating sequence $\{f_i\}_{i\in \N}$ satisfying 1, 2 and 3. Notice that this metric measure space satisfies the doubling condition, as a consequence of the combination of Proposition \ref{prop:equivd-dks} and Proposition \ref{prop:D+P}. Up to taking a subsequence of $\{f_i\}$, we can assume that $\Lip_{\di_\Ks}[f_i] \to \Lip_{\di_\Ks}[f]$ $\m$-almost everywhere and $ \m(B_R(\bar z)\setminus \cup_l C_{i,l})\leq 2^{-i}$ for every $i$. This last requirement implies in particular that $\m$-almost every $x \in B_R(\bar z)$ there exists $\iota(x)$ such that $x\in \cup_l C_{i,l}$ for every $i\geq \iota (x)$. On the other hand notice that for $m$-almost every $x\in C_{i,l}$ (in particular, for every density point of $C_{i,l}$) it holds that
\begin{align*}
    \ks[f_i](x) = \ks[ c_{i,l} \cdot \di_\Ks(z_{i,l},\cdot)] &= c_{i,l} \cdot \ks[\di_\Ks(z_{i,l},\cdot)] \\
    &\leq c_{i,l} = \Lip_{\di_\Ks}[c_{i,l} \cdot \di_\Ks(z_{i,l},\cdot)](x) \leq \Lip_{\di_\Ks}[f_i](x),
\end{align*}
where the first inequality follows from Proposition \ref{prop:ksleq1}, while the first equality and the second inequality hold because $x$ is a density point of $C_{i,l}$. Now we can apply Lemma \ref{lem:mazur} and deduce that for $\m$-almost every $x\in B_R(\bar z)$
\begin{equation*}
     \ks[f](x) \leq \limsup_{i\to \infty} \Lip_{\di_\Ks}[f_i](x) = \Lip_{\di_\Ks}[f](x).
\end{equation*}
Since this can be deduced for every ball $B_R(\bar z)$, we can conclude that $\ks[f]\leq \Lip_{\di_\Ks}[f]$ $\m$-almost everywhere.
\end{proof}

\subsection{Proof of the inequality $\ks[f]\geq \Lip_{\di_\Ks}[f]$}
In this last subsection we prove the most challenging inequality needed to conclude \eqref{eq:lip=ks} and consequently our main result $\Ch_\Ks=\tilde \Ks$. We stress that the locally Minkowski assumption on the reference metric measure space will play a crucial role. In fact, it will allow us to translate (in some sense) our problem to an Euclidean space, where a standard duality result (Lemma \ref{lem:duality}) will be enough to conclude. For this reason, in the following we will often deal with the Korevaar-Shoen energy of linear functions in $\R^{n(x)}$, thus we adopt the following notation:
\begin{equation*}
     \ks^2[v]:=\fint_{B_1(0)}  |\scal{v}{\cdot}|^2 \de \Leb^{n(x)} .
\end{equation*}
In particular, for every fixed Lipschitz function $f\in \Lipfunc(\X)$, we are going to prove that 
\begin{equation}\label{eq:estimate2}
    \ks[f](x)\geq \Lip_{\di_\Ks}[f](x), 
\end{equation}
for every $x\in \X$ such that the conclusion of Proposition \ref{prop:differential} holds in $x$ for a linear function $g=\scal{v_g}{\cdot}$ and that
\begin{equation}\label{eq:lip+}
    \Lip_{\di_\Ks}[f](x) = \lim_{r \to 0} \sup_{\di_\Ks(y,x)=r} \frac{|f(y)-f(x)|}{\di_\Ks(y,x)} .
\end{equation}
Notice that, as a consequence of Proposition \ref{prop:differential} and Proposition \ref{prop:pointlip}, this set of $x$ has full $\m$-measure.  
Fixed such an $x$, accordingly to Corollary \ref{cor:kswithdiff}, we have that 
\begin{equation*}
    \ks^2[f](x) = \fint_{B_1(0)} |g|^2 \de \Leb^{n(x)} = \ks^2[v_g].
\end{equation*}
Take $0<d<\frac 14 \frac{\sqrt{\tilde C}}{1 + \sqrt{\tilde C}}$, where $\tilde C$ is the constant appearing in Corollary \ref{cor:convKS} (and in Proposition \ref{prop:potestimate}). Recall that $\di_\Ks \simeq \di$ (cfr. Proposition \ref{prop:equivd-dks}), then we can find $0<a<b<c<d$ such that for every $s>0$
\begin{equation*}
    \{x' \, :\,\di_\Ks(x',x)=bs\} \subset \{x' \, :\,a s\leq \di(x',x)\leq cs\}:= C_s \subset \{x' \, :\,\di_\Ks(x',x)\leq d s\}.
\end{equation*}
Consequently, keeping in mind \eqref{eq:lip+}, we deduce that 
\begin{align*}
    \Lip_{\di_\Ks}[f](x) = \lim_{m \to 0} \sup_{\di_\Ks(y,x)=br_m} \frac{|f(y)-f(x)|}{\di_\Ks(y,x)} & \leq \lim_{m \to 0} \sup_{y \in C_{r_m}} \frac{|f(y)-f(x)|}{\di_\Ks(y,x)} \\
    &\leq \lim_{m \to 0} \sup_{\di_\Ks(y,x)\leq d r_m} \frac{|f(y)-f(x)|}{\di_\Ks(y,x)} = \Lip_{\di_\Ks}[f](x),
\end{align*}
and in particular 
\begin{equation*}
    \Lip_{\di_\Ks}[f](x) = \lim_{m \to 0} \sup_{y \in C_{r_m}} \frac{|f(y)-f(x)|}{\di_\Ks(y,x)}.
\end{equation*}
Now, for every fixed $\varepsilon>0$, we can find $m$ such that $\varepsilon_m<\varepsilon$,
\begin{equation}\label{eq:linftyest}
    \norm{\frac{(f-f(x))\circ (i_m)^{-1}}{r_m}-g}_{L^\infty(B_1(0))}< \varepsilon,
\end{equation}  
and 
\begin{equation}\label{eq:lipCrm}
    \Lip_{\di_\Ks}[f](x) \leq  \sup_{ y \in C_{r_m}} \frac{|f(y)-f(x)|}{\di_\Ks(y,x)} +\varepsilon.
\end{equation}
In particular, from \eqref{eq:linftyest} we deduce that 
\begin{equation}\label{eq:estimateg}
    \frac{|f(y)-f(x)|}{r_m}\leq |g(i_m(y))| +\varepsilon,
\end{equation}
for every $y \in (i_m)^{-1} (B_1(0)) \supset (i_m)^{-1} (B_{2d}(0)) \supset B_{d r_m}(x) \supset C_{r_m}$. Consequently, combining \eqref{eq:lipCrm} and \eqref{eq:estimateg},  we have that
\begin{equation}\label{eq:prelemma}
    \begin{split}
        \Lip_{\di_\Ks}[f](x) &\leq  \sup_{  y \in C_{r_m}} \frac{r_m\cdot |g(i_m(y))|+ r_m \varepsilon}{\di_\Ks(y,x)}  +\varepsilon\\
        &\leq  \sup_{  y \in C_{r_m}} \frac{r_m\cdot |g(i_m(y))|}{\di_\Ks(y,x)} + \frac \varepsilon e +\varepsilon\\
    &\leq \sup_{z\in B_{2d}(0) } \frac{r_m\cdot |g(z)|}{\di_\Ks(x,(i_m)^{-1}(z))} + \frac\varepsilon e +\varepsilon\\
    &\leq \sup_{z\in B_{2d}(0) } \frac{r_m\cdot |\scal{v_g}{z}|}{\di_\Ks(x,(i_m)^{-1}(z))} + \frac\varepsilon e +\varepsilon,
    \end{split}
\end{equation}
where the constant $e>0$ is such that $\di_\Ks(x,\cdot) \geq e r_m$ on $C_{r_m}$.
The following lemma helps to estimate the supremum in the last term of \eqref{eq:prelemma}.

\begin{lemma}
  Let $z\in B_{2d}(0)$, then 
\begin{equation}\label{eq:finallemma}
    \di_\Ks (x,(i_m)^{-1}(z)) \geq (1-O(\varepsilon)) \cdot r_m \cdot \sup_{w\in \R^{n(x)}}\frac{|\scal{w}{z}|}{\ks[w]}
\end{equation}
\end{lemma}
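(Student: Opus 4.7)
The plan is to construct, for every $w \in \R^{n(x)}$, a Lipschitz test function $f_w \colon \X \to \R$ satisfying $\ks[f_w] \le 1$ $\m$-almost everywhere and
\[
    f_w\big((i_m)^{-1}(z)\big) - f_w(x) \,\geq\, (1-O(\varepsilon))\, r_m\, \frac{\langle w, z\rangle}{\ks[w]}.
\]
The inequality \eqref{eq:finallemma} will then follow directly from the definition of $\di_\Ks$ upon taking the supremum over $w$ (and exploiting the $w \leftrightarrow -w$ symmetry to reach the absolute value).

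The natural candidate, motivated by the identification with the Minkowski model through $i_m$ and by the fact that linear functions are the extremal competitors in $(\R^{n(x)}, \|\cdot\|)$, is the pullback $\phi(y) := r_m \langle w, i_m(y) \rangle / \ks[w]$ defined on $B_{2 r_m}(x)$. One immediately has $\phi(x) = 0$ and $\phi((i_m)^{-1}(z)) = r_m \langle w, z\rangle / \ks[w]$, matching the target value exactly. Combining the $\varepsilon_m$-bi-Lipschitz property of $i_m$ (item 1 of Proposition \ref{prop:differential}) with the pushforward measure estimate (item 3), I would perform a change of variables in $\ks_r^2[\phi](y)$ via $i_m$ to reduce the computation to an integral of $|\langle w, \cdot \rangle|^2$ over a ball in $(\R^{n(x)}, \|\cdot\|)$; after rescaling to the unit ball, this integral is recognized as $\fint_{B_1(0)} |\langle w, \cdot\rangle|^2 \, d\Leb^{n(x)} = \ks^2[w]$. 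Accumulating the $(1\pm\varepsilon)$ factors produced by the two-sided bi-Lipschitz and pushforward bounds yields $\ks[\phi](y) \le 1+O(\varepsilon)$ at every $y$ in the interior of $B_{2r_m}(x)$.

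The central challenge is to globalize $\phi$ to a Lipschitz function $\tilde\phi \colon \X \to \R$ with $\ks[\tilde\phi] \le 1+O(\varepsilon)$ \emph{everywhere} in $\X$, preserving the values $\tilde\phi(x)=0$ and $\tilde\phi((i_m)^{-1}(z)) = \phi((i_m)^{-1}(z))$; setting $f_w := \tilde\phi/(1+O(\varepsilon))$ then delivers the required test function. The strategy is to first clip $\phi$ at a level $\pm K$, with $K$ chosen so that $(i_m)^{-1}(z)$ lies in the unclipped region while the clipping sets $S_\pm := \{\phi_K = \pm K\}$ are nonempty subsets of $B_{2r_m}(x)$; this is feasible because $z\in B_{2d}(0)$ with $d<\tfrac14$. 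The clipped function $\phi_K:=\max(-K,\min(K,\phi))$ retains $\ks[\phi_K] \le 1+O(\varepsilon)$ by a pointwise max/min argument parallel to Lemma \ref{lem:ksofmax}. The extension outside $B_{2r_m}(x)$ is built from the cone functions $y \mapsto \pm K \mp \di_\Ks(y, y_0)$, which have $\ks \le 1$ by Proposition \ref{prop:ksleq1}, suitably assembled via sup/inf over $y_0 \in S_\pm$ and a final outer max/min, operations that preserve the $\ks$ bound by (the natural extension to families of) Lemma \ref{lem:ksofmax}.

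The main obstacle is verifying that the assembled $\tilde\phi$ (i) indeed coincides with $\phi$ at the point $(i_m)^{-1}(z)$, (ii) is globally Lipschitz on $\X$, which is automatic from $\di_\Ks \simeq \di$ (Proposition \ref{prop:equivd-dks}), and most importantly (iii) retains the bound $\ks[\tilde\phi] \le 1+O(\varepsilon)$ also on the transitional region near $\partial B_{2r_m}(x)$, where the pulled-back Minkowski behavior must be glued to the $\di_\Ks$-cone extension. The slack $O(\varepsilon)$ is precisely what absorbs the mismatch between the pulled-back Minkowski geometry and the ambient $\di_\Ks$-geometry at scale $r_m$, and it is here that the locally Minkowski assumption, together with the explicit differentiability provided by Proposition \ref{prop:differential}, plays its essential role.
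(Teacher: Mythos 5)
Your first half coincides with the paper's argument: pull back the linear function $h=\scal{w}{\cdot}$ through $i_m$, i.e. consider $f_m = r_m\, h\circ i_m$, and use the $\varepsilon_m$-bi-Lipschitz property together with the pushforward bound on the measure to get $\ks[f_m]\leq (1+O(\varepsilon_m))\,\ks[w]$ $\m$-a.e. on a slightly smaller ball; after normalizing, this is exactly the paper's $\tilde f_m$. The problem is the step you yourself label ``the main obstacle'': producing a \emph{global} competitor for \eqref{eq:defdks} which still agrees with the pulled-back linear function at $x$ and at $(i_m)^{-1}(z)$. This is the crux of the lemma, and your sketch of it does not go through. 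If you clip at level $\pm K$ and then take min/max with the cone functions $y\mapsto \pm K \mp \di_\Ks(y,y_0)$ anchored at the clipping sets $S_\pm=\{\phi_K=\pm K\}$, then preserving the value at $p=(i_m)^{-1}(z)$ forces $K-\di_\Ks(p,y_0)\geq \phi(p)$ for all $y_0\in S_+$, i.e. $\phi(y_0)-\phi(p)\geq \di_\Ks(y_0,p)$: a \emph{lower} bound on the increments of $\phi$ in terms of $\di_\Ks$. No such bound is available (it would essentially be the inequality $\Lip_{\di_\Ks}[f]\leq\ks[f]$ that this whole subsection is trying to prove, and it is false for a generic linear $h$, e.g. one nearly constant in the direction of $z$), so the min/max with your cones can lower the value at $(i_m)^{-1}(z)$ and the chain $\di_\Ks(x,(i_m)^{-1}(z))\geq \tilde\phi((i_m)^{-1}(z))-\tilde\phi(x)$ no longer yields \eqref{eq:finallemma}. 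Likewise, ``feasible because $d<\tfrac14$'' misses the actual mechanism: $d$ is chosen smaller than $\tfrac14\frac{\sqrt{\tilde C}}{1+\sqrt{\tilde C}}$ precisely so that a quantitative Lipschitz bound can certify that the truncation is inactive on $i_m^{-1}(B_{2d}(0))$.

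The ingredient your plan never invokes, and which resolves the obstacle, is the two-sided estimate of Corollary \ref{cor:convKS}: from $\tilde C\,\Lip[\tilde f_m]^2\leq \ks^2[\tilde f_m]\leq 1$ one gets that $\tilde f_m$ is $\tfrac{1}{\sqrt{\tilde C}}$-Lipschitz with respect to the \emph{ambient} distance $\di$. The paper then truncates with the single ambient cone $\psi=(r_m-\di(x,\cdot))_+$ centered at $x$ (not at the level sets), setting $\bar f_m=\max\{\min\{\tilde f_m,\psi\},-\psi\}$: this vanishes outside $B_{r_m}(x)$, so it is a genuine global competitor with $\ks[\bar f_m]\leq 1$ by Lemma \ref{lem:ksofmax} and $\ks[\psi]\leq\Lip[\psi]\leq 1$; and since $|\tilde f_m|\leq \tfrac1{\sqrt{\tilde C}}\di(x,\cdot)<\psi$ on $B_{\frac{\sqrt{\tilde C}}{1+\sqrt{\tilde C}}r_m}(x)\supset i_m^{-1}(B_{2d}(0))$ (this is where the choice of $d$ enters), the truncation is inactive at $(i_m)^{-1}(z)$, so $\bar f_m$ there equals $\tilde f_m$ and the desired lower bound on $\di_\Ks(x,(i_m)^{-1}(z))$ follows. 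Without this (or an equivalent) quantitative control tying the oscillation of the pulled-back linear function to $\di$, your gluing step remains a genuine gap rather than a technicality absorbed by the $O(\varepsilon)$ slack.
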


\begin{proof}
Fix a linear function $h=\scal{w}{\cdot}$ on $\R^{n(x)}$ and define the function
\begin{equation*}
    f: B_{2r_m}(x) \to \R \qquad f_m := r_m \cdot h \circ i_m ,
\end{equation*}
which is clearly Lipschitz and satisfies $f_m(x)=0$. 
Fix a point $y\in B_{\frac32 r_m}(x)$ 
and notice that $B_r(y)=B_{r/r_m}^{\di_{r_m}}(y)$, then for $r$ small enough
\begin{align*}
    \ks^2_r[f_m](y) &= \fint_{B_r(y)} \frac{|f_m(y')-f_m(y)|^2}{r^2} \de \m (y')= \frac{1}{r_m^2} \frac{1}{\m(B_r(x))}\int_{{B_{r/r_m}^{\di_{r_m}}(y)}} \frac{|f_m(y')-f_m(y)|^2}{(r/r_m)^2} \de \m (y')\\
    &=  \frac{1}{\m(B_r(x))} \int_{i_m(B_{r/r_m}^{\di_{r_m}}(y))} \frac{|h(z')-h(z)|^2}{(r/r_m)^2} \de (i_m)_\#\m (z'),
\end{align*}
where $z=i_m(y)$. Moreover, if we denote $\bar \m$ the normalized measure $\frac{\m}{\m(B_{r_m}(x))}$, we also have that
\begin{equation}\label{eq:ksmanipulated}
    \ks^2_r[f_m](y) = \frac{1}{\bar\m(B_r(x))} \int_{i_m(B_{r/r_m}^{\di_{r_m}}(y))} \frac{|h(z')-h(z)|^2}{(r/r_m)^2} \de (i_m)_\#\bar \m (z').
\end{equation}
Now, since $i_m$ is $\varepsilon_m$-bi-Lipschitz, it holds that 
\begin{equation}\label{eq.setinclusion}
    B_{(1-\varepsilon_m) r/r_m} (z) \subseteq i_m (B_{r/r_m}^{\di_{r_m}}(y)) \subseteq  B_{(1+\varepsilon_m) r/r_m} (z),
\end{equation}
and consequently that
\begin{align*}
    (1-\varepsilon_m)c(x)& \cdot \Leb^{n(x)} (B_{(1-\varepsilon_m) r/r_m} (z)) \leq (i_m)_\# \bar\m (B_{(1-\varepsilon_m) r/r_m} (z)) \leq \bar\m (B_{r/r_m}^{\di_{r_m}}(y))\\
    &= \bar\m(B_r(x)) 
    \leq (i_m)_\# \bar\m (B_{(1+\varepsilon_m) r/r_m} (z))\leq (1+\varepsilon_m)c(x) \cdot \Leb^{n(x)} (B_{(1+\varepsilon_m) r/r_m} (z)).
\end{align*}
Using the last inequality and the scaling property of the Lebesgue measure, we deduce that 
\begin{equation*}
    c(x) \cdot \Leb^{n(x)} (B_{(1+\varepsilon_m) r/r_m} (z)) = \bigg[\frac{1+\varepsilon_m}{1-\varepsilon_m}\bigg]^{n(x)} c(x) \cdot \Leb^{n(x)} (B_{(1-\varepsilon_m) r/r_m} (z)) \leq \frac{(1+\varepsilon_m)^{n(x)}}{(1-\varepsilon_m)^{n(x)+1}} \bar \m(B_r(x)).
\end{equation*}
Combining this last inequality with \eqref{eq:ksmanipulated} we obtain 
\begin{align*}
    \ks^2_r[f_m](y) &\leq \frac{(1+\varepsilon_m)^{n(x)}}{(1-\varepsilon_m)^{n(x)+1}} \frac{1}{  c(x) \cdot \Leb^{n(x)} (B_{(1+\varepsilon_m) r/r_m} (z))} \int_{i_m(B_{r/r_m}^{\di_{r_m}}(y))} \frac{|h(z')-h(z)|^2}{(r/r_m)^2} \de (i_m)_\#\bar \m (z') \\
    &\leq \bigg[\frac{1+\varepsilon_m}{1-\varepsilon_m}\bigg]^{n(x)+1}\frac{1}{\Leb^{n(x)} (B_{(1+\varepsilon_m) r/r_m} (z))} \int_{B_{(1+\varepsilon_m) r/r_m} (z)} \frac{|h(z')-h(z)|^2}{(r/r_m)^2} \de \Leb^{n(x)} (z') \\
    & = \frac{(1+\varepsilon_m)^{n(x)+3}}{(1-\varepsilon_m)^{n(x)+1}}\fint_{B_{(1+\varepsilon_m) r/r_m} (z)} \frac{|h(z')-h(z)|^2}{(1+\varepsilon_m)^2(r/r_m)^2} \de \Leb^{n(x)} (z')\\
    & = \frac{(1+\varepsilon_m)^{n(x)+3}}{(1-\varepsilon_m)^{n(x)+1}}\fint_{B_1(0)} |h(z')|^2 \de \Leb^{n(x)} (z') = (1+O(\varepsilon_m))\cdot \ks^2[w],
\end{align*}
where the second inequality follows from \eqref{eq.setinclusion} and the last equality holds because $h$ is a linear function. Since the last inequality holds for every $y\in B_{\frac32 r_m}(x)$ and every $r$ sufficiently small we conclude that
\begin{equation*}
    \ks[f_m] \leq (1+O(\varepsilon_m))\cdot \ks[w],
\end{equation*}
for $\m$-almost every $y\in B_{\frac32 r_m}(x)$. In particular there exists $\mathsf K$ (independent from $m$) such that, the Lipschitz function 
\begin{equation*}
    \tilde f_m= \frac{f_m}{(1+\mathsf K\varepsilon_m)\cdot \ks[w]} 
\end{equation*}
satisfies $\ks[\tilde f_m]\leq 1$, $\m$-almost everywhere on $B_{\frac32 r_m}(x)$. We then define the function $\psi:\X \to \R$ as  $\psi(x')= \max\{r_m- \di(x,x'),0\}$ and consider the function
\begin{equation*}
    \bar{f}_m = \max\{\min\{\tilde f_m, \psi\},-\psi\}.
\end{equation*}
Observe that the function $\bar f_m$ is Lipschitz and it is constantly equal to 0 outside the ball $B_{r_m}(x)$. Moreover, notice that, according to Corollary \ref{cor:convKS}, we have $\ks[\psi]\leq\Lip[\psi]\leq 1$, $\m$-almost everywhere in $\X$, then, applying Lemma \ref{lem:ksofmax}, we deduce that $\ks[\bar f_m]\leq 1$ $\m$-almost everywhere. In particular $\bar f_m$ is a competitor for \eqref{eq:defdks}. On the other hand, applying Corollary \ref{cor:convKS} once again, we deduce that $\Lip[\tilde f_m]\leq \frac{1}{\sqrt{\tilde C}} \ks[\tilde f_m]\leq \frac{1}{\sqrt{\tilde C}}$ $\m$-almost everywhere and thus $\tilde f_m$ is a $\frac{1}{\sqrt{\tilde C}}$-Lipschitz function. Now, since $\psi$ is $1$-Lipschitz, $\psi(0)=r_m$ and $\tilde f_m(x)=0$, we have that $\bar f_m= \tilde f_m$ on the set 
\begin{equation*}
   i_m^{-1}(B_{2d}(0)) \subset B^{\di_{r_m}}_{4d}(x) \subset B_{ \frac{\sqrt{\tilde C}}{1 + \sqrt{\tilde C}} r_m}(x).
\end{equation*}
In particular, by the definition of the intrinsic distance, we conclude that for every $z\in B_{2d}(0)$ 
\begin{align*}
    \di_\Ks (x,(i_m)^{-1}(z)) &\geq |\bar f_m((i_m)^{-1}(z))- \bar f_m(x)| = |\tilde f_m((i_m)^{-1}(z))- \tilde f_m(x)| \\
    &= \frac{1}{(1+\mathsf K\varepsilon_m)\cdot \ks[w]} | f_m((i_m)^{-1}(z))-  f_m(x)| = \frac{r_m |\scal{w}{z}| }{(1+\mathsf K\varepsilon_m)\cdot \ks[w]}.
\end{align*}
Taking the supremum over all linear functions $h$, and thus over all $w\in \R^{n(x)}$, we obtain \eqref{eq:finallemma}.
\end{proof}

\noindent Now we can put together the result of this last lemma with \eqref{eq:prelemma}, obtaining
\begin{equation*}
    \Lip_{\di_\Ks}[f](x) \leq (1+O(\varepsilon))\sup_{z\in B_{2d}(0) } \frac{|\scal{v_g}{z}|}{\sup_{w\in \R^{n(x)}}\frac{|\scal{w}{z}|}{\ks[w]}} + \frac\varepsilon e +\varepsilon.
\end{equation*}
Then we can send $\varepsilon \to 0$ and conclude 
\begin{equation}\label{eq:finalest}
    \Lip_{\di_ \Ks}[f](x) \leq \sup_{z\in B_{2d}(0) } \frac{|\scal{v_g}{z}|}{\sup_{w\in \R^{n(x)}}\frac{|\scal{w}{z}|}{\ks[w]}}= \sup_{z\in  \R^{n(x)}} \frac{|\scal{v_g}{z}|}{\sup_{w\in \R^{n(x)}}\frac{|\scal{w}{z}|}{\ks[w]}}.
\end{equation}
At this point we have shifted our problem on $\R^{n(x)}$ and in order to conclude it is sufficient the following lemma.

\begin{lemma}\label{lem:duality}
   For every $v\in \R^{n(x)}$ it holds that
   \begin{equation*}
       \sup_{z\in \R^{n(x)} } \frac{|\scal{v}{z}|}{\sup_{w\in \R^{n(x)}}\frac{|\scal{w}{z}|}{\ks[w]}} = \ks[v]
   \end{equation*}
\end{lemma}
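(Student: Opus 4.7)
The plan is to recognize the two nested suprema as an iteration of the duality operation on norms over the finite-dimensional space $\R^{n(x)}$, and then invoke the standard fact that in finite dimensions the double dual of a norm is the norm itself.

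First I would verify that $w\mapsto \ks[w]$ is a norm on $\R^{n(x)}$. By definition,
\[
\ks^{2}[w]=\fint_{B_{1}(0)}|\scal{w}{y}|^{2}\,\de\Leb^{n(x)}(y),
\]
which is a nonnegative quadratic form in $w$; absolute homogeneity and the triangle inequality are immediate from Minkowski's inequality for the $L^{2}$-norm on $B_{1}(0)$. Positivity follows since $\ks[w]=0$ forces $\scal{w}{y}=0$ for $\Leb^{n(x)}$-a.e.\ $y\in B_{1}(0)$, and this open condition forces $w=0$. (In fact $\ks$ arises from an inner product, but that will not be needed.) In particular, the supremum in $w$ in the statement of the lemma is really taken over $w\neq 0$ and defines a finite quantity
\[
\|z\|_{*}:=\sup_{w\neq 0}\frac{|\scal{w}{z}|}{\ks[w]},
\]
namely the dual norm of $\ks$ with respect to the standard pairing on $\R^{n(x)}$.

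Second, I would rewrite the left-hand side of the desired identity as
\[
\sup_{z\neq 0}\frac{|\scal{v}{z}|}{\|z\|_{*}}=:\|v\|_{**},
\]
i.e.\ the bidual norm of $\ks$ evaluated at $v$. The conclusion then follows from the classical finite-dimensional reflexivity statement $\|v\|_{**}=\ks[v]$. For the inequality $\|v\|_{**}\le\ks[v]$ one simply uses that, by definition of $\|z\|_{*}$, one has $|\scal{v}{z}|\le \ks[v]\,\|z\|_{*}$ for every $z$. For the reverse inequality $\|v\|_{**}\ge\ks[v]$ one picks, by Hahn--Banach (or its elementary finite-dimensional version on the unit ball of $\ks$, which is compact and convex), a linear functional $\ell_{v}(\cdot)=\scal{\cdot}{z_{v}}$ on $\R^{n(x)}$ with $\|z_{v}\|_{*}\le 1$ and $\scal{v}{z_{v}}=\ks[v]$, so that $\|v\|_{**}\ge|\scal{v}{z_{v}}|/\|z_{v}\|_{*}\ge\ks[v]$.

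There is essentially no obstacle; the only thing to keep track of is that the suprema are well-defined (no division by zero), which is handled by the norm property of $\ks$ established in the first step. Combining the two steps yields
\[
\sup_{z\in\R^{n(x)}}\frac{|\scal{v}{z}|}{\sup_{w\in\R^{n(x)}}\tfrac{|\scal{w}{z}|}{\ks[w]}}=\|v\|_{**}=\ks[v],
\]
which, when inserted back into \eqref{eq:finalest}, gives $\Lip_{\di_{\Ks}}[f](x)\le\ks[f](x)$ and completes the proof of the remaining inequality.
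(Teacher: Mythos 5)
Your proof is correct, but it follows a different route from the paper. The paper exploits the fact that $\ks$ is a \emph{quadratic} norm: writing $\ks^2[v]=v^tAv$ with $A^{i,j}=\fint_{B_1(0)}z^iz^j\,\de\Leb^{n(x)}(z)$ symmetric and positive definite, it computes the inner supremum explicitly via a Cauchy--Schwarz-type optimization, obtaining $\sup_w |\scal{w}{z}|/\ks[w]=(z^tA^{-1}z)^{1/2}$, and then repeats the same computation for the outer supremum to land back on $(v^tAv)^{1/2}=\ks[v]$. You instead forget the quadratic structure entirely (as you note, it is not needed), check only that $\ks$ is a norm, identify the two nested suprema as the dual and bidual norms with respect to the standard pairing, and invoke the finite-dimensional biduality $\|\cdot\|_{**}=\ks[\cdot]$, with the nontrivial inequality supplied by a Hahn--Banach norming functional (or a compactness/separation argument on the unit ball of $\ks$). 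What each approach buys: the paper's computation is completely elementary and self-contained, and it yields the explicit formula $(z^tA^{-1}z)^{1/2}$ for the dual norm as a by-product, which is in the spirit of the quadratic-energy theme of the paper; your argument is shorter at the conceptual level and strictly more general, since it proves the same duality identity for an arbitrary norm in place of $\ks$, at the price of appealing to the (standard, but less explicit) bidual theorem. Both are complete proofs; just make sure, as you do implicitly, that the outer supremum is understood over $z\neq 0$ and that finiteness of $\|z\|_*$ follows from the equivalence of norms on $\R^{n(x)}$.
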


\begin{proof}
We can explicit $\ks[v]$ in coordinate, using the Einstein notation and obtaining that 
\begin{align*}
     \ks^2[v]&=\fint_{B_1(0)}  |\scal{v}{z}|^2 \de \Leb^{n(x)}(z) =  \fint_{B_1(0)} ( v_i z^i )^2 \de \Leb^{n(x)}(z) \\
     &= \fint_{B_1(0)}  v_i v_j z^i z^j  \de \Leb^{n(x)}(z)= v_i A^{i,j} v_j = v^t A v,
\end{align*}
where $A^{i,j}=\fint_{B_1(0)}  z^i z^j  \de \Leb^{n(x)}(z)$. The matrix $A$ is obviously symmetric and it is easy to notice that is positive definite, in fact 
\begin{equation*}
    v^t A v = \ks^2[v] >0 \qquad \text{for every }v\neq 0,
\end{equation*}
in particular it is invertible with symmetric inverse $A^{-1}$. As a consequence we can deduce that 
\begin{align*}
    \sup_{w\in \R^{n(x)}}\frac{|\scal{w}{z}|}{\ks[w]} = \sup_{w\in \R^{n(x)}} \frac{w^t A (A^{-1} z)}{(w^t A w)^{1/2}}=[(A^{-1} z)^t A (A^{-1} z)]^{1/2}= (z^t A^{-1} z)^{1/2}.
\end{align*}
With the same argument we obtain the following identities
\begin{equation*}
    \sup_{z\in \R^{n(x)} } \frac{|\scal{v}{z}|}{\sup_{w\in \R^{n(x)}}\frac{|\scal{w}{z}|}{\ks[w]}} =  \sup_{z\in \R^{n(x)} } \frac{|\scal{v}{z}|}{(z^t A^{-1} z)^{1/2}} = (v^t A v)^{1/2} = \ks[v],
\end{equation*}
which allow to conclude.
\end{proof}

\noindent In particular, applying Lemma \ref{lem:duality} and keeping in mind \eqref{eq:finalest}, we conclude that 
\begin{equation*}
     \Lip_{\di_\Ks}[f](x) \leq \ks[v_g] = \ks[f](x),
\end{equation*}
which is the desired inequality. 

In conclusion, we have proven that the Korevaar-Schoen energy $\tilde \Ks$ is the Cheeger energy associated to the metric measure space $(\X,\di_\Ks, \m)$. Since the energy $\tilde \Ks$ is quadratic (cfr. Corollary \ref{cor:ksquadr}), $(\X,\di_\Ks, \m)$ is an infinitesimally Hilbertian metric measure space. Moreover the same argument  as in section \ref{sec:diCh} allows to prove that $\di_\Ks$ coincide with the the intrinsic distance associated to the energy $\tilde \Ks$.



\end{document}